\theoremstyle{plain}
\newtheorem{theorem}{Theorem}
\newtheorem{lemma}[theorem]{Lemma}
\newtheorem{proposition}[theorem]{Proposition}
\newtheorem{corollary}[theorem]{Corollary}
\numberwithin{theorem}{section}
\numberwithin{equation}{theorem}
\theoremstyle{definition}
\newtheorem{definition}[theorem]{Definition}
\newtheorem{example}[theorem]{Example}
\newtheorem{remark}[theorem]{Remark}
\newtheorem{question}[theorem]{Question}
\newtheorem*{question*}{Question}
\newcommand{\fm}{\mathfrak{m}}
\DeclareMathOperator{\End}{End}
\DeclareMathOperator{\Hom}{Hom}
\DeclareMathOperator{\Aut}{Aut}
\DeclareMathOperator{\gr}{gr}
\DeclareMathOperator{\GKdim}{GKdim}
\DeclareMathOperator{\LND}{LND}
\begin{document}

\title[Zariski cancellation problem]
{Zariski cancellation problem for non-domain
noncommutative algebras}

\author{O. Lezama, Y.-H. Wang and J.J. Zhang}

\address{Lezama: Departamento de Matem{\'a}ticas
Universidad Nacional de Colombia, Sede Bogot{\'a}, Colombia}

\email{jolezamas@unal.edu.co}

\address{Wang: School of Mathematics,
Shanghai Key Laboratory of Financial Information Technology,
Shanghai University of Finance and Economics, Shanghai 200433, China}

\email{yhw@mail.shufe.edu.cn}

\address{Zhang: Department of Mathematics, Box 354350,
University of Washington, Seattle, Washington 98195, USA}

\email{zhang@math.washington.edu}

\begin{abstract}
We study Zariski cancellation problem for noncommutative
algebras that are not necessarily domains.
\end{abstract}

\subjclass[2000]{Primary 16P99, 16W99}


\keywords{Zariski cancellation problem}


\maketitle


\section*{Introduction}
\label{yysec0}

This paper can be considered as a sequel to \cite{BZ1}, where
Bell-Zhang studied Zariski Cancellation Problem for noncommutative 
domains (in particular, for several families of Artin-Schelter 
regular algebras \cite{AS}). Many mathematicians have been making 
significant contributions to this research direction and related topics, 
see Brown-Yakimov \cite{BY}, Ceken-Palmieri-Wang-Zhang \cite{CPWZ1, CPWZ2},
Chan-Young-Zhang \cite{CYZ1, CYZ2}, Gaddis \cite{Ga}, Gaddis-Kirkman-Moore 
\cite{GKM}, Gaddis-Won-Yee \cite{GWY}, Levitt-Yakimov \cite{LY},
L{\"u}-Mao-Zhang \cite{LMZ}, Nguyen-Trampel-Yakimov \cite{NTY},
Tang \cite{Ta1, Ta2} and others.

Throughout let $k$ be a base commutative domain and everything is 
over $k$. If $k$ is a field, and then it is denoted by $\Bbbk$ 
instead. For example we assume that $k$ is a field $\Bbbk$ in 
Theorems \ref{yythm0.4} and \ref{yythm0.6}. Recall that an 
algebra $A$ is called {\it cancellative} 
if for any algebra $B$, an algebra isomorphism $A[t] \cong B[s]$ 
implies that $A\cong B$.  Here $t$ and $s$ are independent central 
variables. In the commutative case, the famous Zariski Cancellation 
Problem (abbreviated as ZCP) asks if the commutative polynomial ring 
$\Bbbk [x_1,\dots,x_n]$ is cancellative for all $n\geq 1$. It is 
well-known that $\Bbbk[x_1]$ is cancellative by a result of 
Abhyankar-Eakin-Heinzer \cite{AEH}, while $\Bbbk[x_1,x_2]$
is cancellative by Fujita \cite{Fu} and Miyanishi-Sugie \cite{MS} in
characteristic zero and by Russell \cite{Ru} in positive
characteristic. The original ZCP (for $n\geq 3$) was open for
many years. In 2014 Gupta \cite{Gu1, Gu2} settled the ZCP negatively
in positive characteristic for $n\geq 3$. The ZCP in characteristic
zero remains open for $n\geq 3$.

The ZCP is related to the Automorphism Problem,
Characterization Problem, Linearization Problem, Embedding
Problem, and Jacobian Conjecture, see a discussion in \cite{Kr, BZ1}.
In the noncommutative setting, the recent research suggests that
it is closely related to the Automorphism Problem of noncommutative
algebras, but it is unclear how to formulate the noncommutative
version of the Characterization Problem, Linearization Problem,
Embedding Problem and so on.

Some general methods were introduced in \cite{BZ1, LMZ} to attack
the ZCP for noncommutative domains. One effective method is to
use the discriminant to control the cancallation property \cite{BZ1}.
To extend the results in \cite{BZ1}, it is natural to ask if
the idea of the discriminant can be applied to non-domain
noncommutative algebras.

The first aim of this paper is to introduce several new concepts
and new methods to handle the ZCP for noncommutative algebras that are
not domain. We study {\it retractable} and {\it detectable}
properties of algebras and relate these properties to the
discriminant computation. Then we generalize some results
in \cite{BZ1} to the non-domain case. Results concerning 
the retractable and the detectable properties (and the rigidity 
introduced in \cite{BZ1}) can be summarized 
in the following diagram

$$\begin{CD}
{\text{$\LND$-rigid}} @>>> {\text{$\LND_Z$-rigid}} \\
@VVV @VVV\\
{\text{retractable}} @>>> {\text{$Z$-retractable}} \\
@. @VVV\\
@. {\text{detectable}}\\
@. @VVV\\
@. {\text{cancellative.}}
\end{CD}
$$

\bigskip

The second aim of this paper is to further understand which classes
of noncommutative algebra are cancellative. A proposition in
\cite[Proposition 1.3]{BZ1} states that, if the center of an algebra
$A$ is the base field $\Bbbk$, then $A$ is cancellative. This 
suggests

\begin{question}
\label{yyque0.1}
If the center of $A$ is finite dimensional over $\Bbbk$, is $A$ cancellative?
\end{question}

We partially answer the above question.

\begin{theorem}
\label{yythm0.2}
Suppose $A$ is strongly Hopfian {\rm{[}}Definition \ref{yydef3.4}{\rm{]}}
and the center of $A$ is artinian. Then $A$ is cancellative.
\end{theorem}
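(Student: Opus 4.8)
The plan is to reduce everything to the center and then finish with the strongly Hopfian hypothesis. Let $\phi\colon A[t]\xrightarrow{\sim}B[s]$ be an algebra isomorphism; I must produce an isomorphism $A\cong B$. Since $t$ and $s$ are central, $Z(A[t])=Z(A)[t]$ and $Z(B[s])=Z(B)[s]$, so $\phi$ restricts to an isomorphism of commutative rings $Z(A)[t]\cong Z(B)[s]$. Because $Z(A)$ is artinian it is Noetherian of Krull dimension $0$, so $Z(A)[t]$, and hence $Z(B)[s]$, is Noetherian of Krull dimension $1$; passing to the quotient $Z(B)=Z(B)[s]/(s)$ I get that $Z(B)$ is Noetherian of dimension $0$, i.e. also artinian. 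Write $f=\phi(t)\in Z(B)[s]$ and $g=\phi^{-1}(s)\in Z(A)[t]$; the whole analysis of the variables now takes place inside these two commutative artinian-coefficient polynomial rings.

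Next I would control the $s$-degree of $f$ (and symmetrically the $t$-degree of $g$) using the zero-dimensionality of the center. A ring isomorphism carries the nilradical onto the nilradical, and $\mathrm{nil}(Z(A)[t])=\mathrm{nil}(Z(A))[t]$, so $\phi$ induces an isomorphism of the reduced rings $\overline{Z(A)}[t]\cong\overline{Z(B)}[s]$, where $\overline{Z(A)}=\prod_i\kappa_i$ and $\overline{Z(B)}=\prod_j\lambda_j$ are finite products of fields. The primitive idempotents match up, giving factorwise isomorphisms $\kappa_i[t]\cong\lambda_{\sigma(i)}[s]$ of polynomial rings over fields; since the units of $\kappa_i[t]$ are exactly $\kappa_i^\times$, each such isomorphism recovers the field and sends the variable to a degree-one polynomial with nonzero leading coefficient. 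Reading this off all factors shows that the linear coefficient of $f$ is a unit in every residue field of $Z(B)$, hence a unit of $Z(B)$ (as $Z(B)$ is artinian), while all higher coefficients of $f$ are nilpotent. The nilradical of $Z(B)$ being nilpotent, I may compose $\phi$ with the affine automorphism of $B[s]$ sending $s\mapsto d_1^{-1}(s-d_0)$ (here $d_0$ is the central constant term and $d_1$ the unit linear coefficient of $f$) to arrange $\phi(t)=s+(\text{terms of }s\text{-degree}\ge 2\text{ with nilpotent central coefficients})$; in particular $f|_{s=0}=0$.

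With this normalization I build a surjection. Let $\epsilon\colon B[s]\to B$ be the augmentation $s\mapsto 0$ and set $\pi=\epsilon\circ\phi|_A\colon A\to B$, an algebra homomorphism. Since $A[t]$ is generated by $A$ and $t$, the ring $B[s]=\phi(A[t])$ is generated by $\phi(A)$ and the central element $f$, so every element has the form $\sum_k\phi(a_k)f^k$; applying $\epsilon$ and using $f|_{s=0}=0$ gives $B=\epsilon(B[s])=\pi(A)$, so $\pi$ is surjective. Running the identical argument for $\phi^{-1}\colon B[s]\to A[t]$ (legitimate, since $Z(B)$ was shown to be artinian) produces a surjective algebra homomorphism $\rho\colon B\to A$. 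Then $\rho\circ\pi\colon A\to A$ is a surjective endomorphism, and the strongly Hopfian hypothesis (Definition \ref{yydef3.4}) forces it to be an isomorphism; hence $\pi$ is injective, and being already surjective it gives $A\cong B$.

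The main obstacle is the degree control of the second paragraph in the presence of zero-divisors: one cannot argue by a naive degree/leading-coefficient computation as in the domain case of \cite{BZ1}, and the artinian hypothesis on the center is exactly what lets me pass to a finite product of fields, run the degree argument fiberwise, and then lift the "linear coefficient is a unit, higher coefficients are nilpotent" conclusion back through the nilpotent nilradical. The second delicate point is that this normalization yields surjectivity of $\pi$ but gives no direct control of its kernel—the surviving nilpotent tail of $f$ could a priori collapse elements—so injectivity cannot be read off by hand; this is precisely the gap that the strongly Hopfian property closes, by converting the surjective endomorphism $\rho\circ\pi$ into an isomorphism.
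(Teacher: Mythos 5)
Your first two reductions are sound: passing to centers, concluding $Z(B)$ is artinian, splitting the reduced center into a finite product of fields, and deducing that $f=\phi(t)$ has unit linear coefficient and nilpotent higher coefficients is all correct (and parallels the paper's use of Lemma \ref{yylem2.3} inside Theorem \ref{yythm4.1}). The genuine gap is the normalization step. Composing $\phi$ with the affine automorphism $s\mapsto d_1^{-1}(s-d_0)$ does \emph{not} arrange $\phi(t)=s+(\text{terms of degree}\ge 2)$, and in particular does not give $f|_{s=0}=0$: the substitution yields
$$\alpha(f)\;=\;s+\sum_{i\geq 2}d_i\,d_1^{-i}(s-d_0)^i,$$
and expanding $(s-d_0)^i$ reintroduces constant and linear terms with nilpotent coefficients. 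Concretely, take $Z(B)=\Bbbk[u]/(u^2)$ and $f=1+s+us^2$; this is a legitimate image of a variable, since $s=f-1-u(f-1)^2$, so $t\mapsto f$ does define an automorphism of $Z(B)[s]$, yet your automorphism turns $f$ into $s+us^2-2us+u$, whose value at $s=0$ is $u\neq 0$. This matters because your surjectivity argument for $\pi=\epsilon\circ\phi|_A$ uses $\epsilon(f)=0$ \emph{exactly}: if $\epsilon(f)$ is merely nilpotent, you only obtain that $B$ is generated by $\pi(A)$ together with the central nilpotent element $\epsilon(f)$, which is not obviously equal to $\pi(A)$.

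The step is repairable, but the repair is a real argument and is exactly where the paper does its work. Either use Hensel/Newton lifting through the nilpotent nilradical $I$ of $Z(B)$: since $f$ has a simple root modulo $I$ (namely $-\bar d_1^{-1}\bar d_0$) and $f'$ is a unit nearby, $f$ has an exact root $v\in Z(B)$, and composing with $s\mapsto s+v$ genuinely kills the constant term; or, avoid normalization altogether and show $s\in B\{f\}$ by successive approximation modulo $I[s]$, $I^2[s]$, $\dots$, terminating because $I$ is nilpotent. The latter iteration is precisely the content of the paper's Lemma \ref{yylem3.9} (detectability lifts through a nilpotent prime radical), which, combined with Lemmas \ref{yylem2.3}, \ref{yylem3.2}, \ref{yylem3.8} and \ref{yylem3.6}, is the paper's route via Theorem \ref{yythm4.2}. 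Once this is supplied, your endgame is fine, and it is in fact slightly leaner than the paper's: your two surjections $\pi:A\to B$ and $\rho:B\to A$ only require $A$ itself to be Hopfian, whereas Lemma \ref{yylem3.6} invokes Hopfianness of $A[t]$, i.e.\ the full strength of the strongly Hopfian hypothesis.
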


Question \ref{yyque0.1} is still open for non-Hopfian algebras.
Note that left (or right) noetherian algebras and locally finite
${\mathbb N}$-graded algebras are strongly Hopfian [Lemma \ref{yylem3.5}].
So Theorem \ref{yythm0.2} covers a large class of algebras. An immediate
consequence of this theorem is the following corollary.

\begin{corollary}
\label{yycor0.3}
If $A$ is left {\rm{(}}or right{\rm{)}} artinian, then $A$ is cancellative.
For example, every finite dimensional algebra over a base field $\Bbbk$ is
cancellative.
\end{corollary}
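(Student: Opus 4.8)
The plan is to deduce the corollary directly from Theorem \ref{yythm0.2}: it suffices to check that a left (or right) artinian algebra $A$ is strongly Hopfian and has artinian center. The Hopfian condition is the easy half. By the Hopkins--Levitzki theorem a left artinian ring is left noetherian, so Lemma \ref{yylem3.5} applies and $A$ is strongly Hopfian; the right-handed case is identical. This part is routine and robust.

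The substance of the proof is the second hypothesis, that the center $Z:=Z(A)$ is artinian, and this is where I expect the real work to lie. When $A$ is finite dimensional over the base field $\Bbbk$ there is nothing to do: $Z$ is a $\Bbbk$-subspace of the finite-dimensional algebra $A$, hence itself finite dimensional, hence artinian, and Theorem \ref{yythm0.2} immediately yields the stated example. For a general left artinian $A$ I would argue through the Jacobson radical $J=J(A)$, which is nilpotent. The quotient $A/J$ is semisimple artinian, so $Z(A/J)$ is a finite product of fields and in particular artinian; the composite $Z \to Z(A/J)$ has kernel $Z\cap J$, which is nilpotent since it lies in $J$. I would then filter $Z$ by the ideals $Z\cap J^{i}$ and attempt to bound the successive quotients $(Z\cap J^{i})/(Z\cap J^{i+1})$ inside the finite-length $A/J$-bimodules $J^{i}/J^{i+1}$, so as to conclude that $Z$ has finite length and is therefore artinian.

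The delicate point, and the step I expect to be genuinely hard, is precisely this last bounding. A descending chain of ideals of $Z$ need not remain strictly descending after extension to left ideals of $A$, because $Z$ is not in general a $Z$-module direct summand of $A$, and the image of $Z$ in $Z(A/J)$ is only a subring of a product of fields. To control this I would exploit the full semiperfect structure of $A$: since $J$ is nil, the central idempotents of $A/J$ lift to $A$, reducing the problem to the indecomposable (block) case, where $Z$ is a local ring whose maximal ideal $Z\cap J$ is nilpotent. In that situation artinianness of $Z$ comes down to showing that each layer $(Z\cap J^{i})/(Z\cap J^{i+1})$ is finite dimensional over the residue field of $Z$.

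That finiteness is the crux, and I would isolate it as a separate lemma on the center of an artinian ring. I would caution that the bound is subtle rather than formal: the central layers sit inside bimodules $J^{i}/J^{i+1}$ whose dimension over the residue field of $Z$ can be large, so the argument must use more than the finite length of $A$ over itself. Once this lemma is in place, everything else in the corollary is immediate, being a direct application of Theorem \ref{yythm0.2}.
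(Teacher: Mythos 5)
Your first step (left artinian $\Rightarrow$ left noetherian by Hopkins--Levitzki, hence strongly Hopfian by Lemma \ref{yylem3.5}(1)) is fine, but the step you yourself flag as the crux is not merely hard: it is false. The center of a left and right artinian algebra need not be artinian. Concretely, let $\Bbbk$ be a field of characteristic zero, $L=\Bbbk(t)$ with derivation $d=d/dt$, and let $M=Le_1\oplus Le_2$ be the $L$-$L$-bimodule with right action $e_1a=ae_1$, $e_2a=ae_2+d(a)e_1$. Form the trivial extension $A=L\oplus M$ with $M^2=0$. Then $A$ is a $\Bbbk$-algebra of left and right composition length $3$, hence artinian on both sides, with $J(A)=M$, $J^2=0$, and $A$ local (so reduction to blocks gains nothing). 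A direct computation shows that the elements of $M$ commuting with all of $L$ are exactly $Le_1$, while the elements of $L$ commuting with all of $M$ are exactly $\ker d=\Bbbk$; hence $Z(A)=\Bbbk\oplus Le_1\cong \Bbbk\ltimes \Bbbk(t)$ with $\Bbbk(t)$ a square-zero ideal. Every $\Bbbk$-subspace of $\Bbbk(t)$ is then an ideal of $Z(A)$, so $Z(A)$ is neither noetherian nor artinian; equivalently, the layer $(Z\cap J)/(Z\cap J^2)\cong\Bbbk(t)$ is infinite dimensional over the residue field $\Bbbk$, which is precisely the finiteness your proposed lemma asserts. The underlying reason is that finite length of $J^i/J^{i+1}$ as an $A$-module gives no control whatsoever over its length as a $Z(A)$-module. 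So Theorem \ref{yythm0.2} simply does not apply to a general artinian algebra, and no repair of this route can work. (The phrase ``immediate consequence'' in the introduction is misleading on exactly this point; the paper's actual proof does not proceed this way.)

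The paper's proof (Theorem \ref{yythm4.1}) performs both reductions one level up, never touching $Z(A)$ itself: since the prime radical $J(A)$ of $A$ is nilpotent, Lemma \ref{yylem3.9} reduces strong detectability of $A$ to that of $A/J$; then Lemma \ref{yylem3.8} reduces that to strong detectability of $Z(A/J)$, which by Wedderburn theory genuinely is a finite direct sum of fields, hence strongly retractable by Lemma \ref{yylem2.3} and strongly detectable by Lemma \ref{yylem3.2}; finally strong Hopfianness plus strong detectability gives strong cancellativity by Lemma \ref{yylem3.6}(2). The distinction your argument misses is between $Z(A)/(Z(A)\cap J)$ --- an uncontrollable subring of a product of fields, whose preimage $Z(A)$ can be as bad as the example above --- and $Z(A/J)$, the center of the quotient, which is always a finite product of fields. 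Lemmas \ref{yylem3.8} and \ref{yylem3.9} are engineered so that only the latter is ever needed, and this is exactly what makes the artinian case go through where the ``artinian center'' hypothesis of Theorem \ref{yythm0.2} fails.
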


For non-artinian (and non-noetherain) algebras we have the following.

\begin{theorem}
\label{yythm0.4}
For every finite quiver $Q$, the path algebra $\Bbbk Q$ is
cancellative.
\end{theorem}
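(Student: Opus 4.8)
The plan is to use that for a finite quiver $Q$ the path algebra $\Bbbk Q$ is a locally finite $\mathbb{N}$-graded algebra, with degree-zero part $(\Bbbk Q)_0=\bigoplus_{v}\Bbbk e_v$ spanned by the vertex idempotents, so that by Lemma \ref{yylem3.5} it is strongly Hopfian; the whole argument then reduces to controlling its center. My first step would be to localize the problem at the connected components $Q_i$ of $Q$. Since $Z(\Bbbk Q)=\prod_i Z(\Bbbk Q_i)$ will turn out to be a reduced commutative ring, the idempotents of $Z(\Bbbk Q)[t]=Z(\Bbbk Q[t])$ are exactly the block idempotents $\epsilon_i$ of $Z(\Bbbk Q)$. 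Given any isomorphism $\Phi\colon \Bbbk Q[t]\to B[s]$, it sends central idempotents to central idempotents, hence permutes blocks and restricts to isomorphisms $\Bbbk Q_i[t]=\epsilon_i\,\Bbbk Q[t]\cong \Phi(\epsilon_i)\,B[s]=B_{\sigma(i)}[s]$, which are again genuine polynomial extensions because $t$ and $s$ are central. Thus it suffices to prove cancellativity of $\Bbbk Q_i$ for each connected $Q_i$ and then reassemble $B\cong\prod_i B_{\sigma(i)}\cong\prod_i\Bbbk Q_i\cong\Bbbk Q$.

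Next I would carry out the center computation in the connected case, where the key dichotomy is that $Z(\Bbbk Q_i)$ is either the ground field $\Bbbk$, or a polynomial ring $\Bbbk[x]$ with the latter occurring precisely when $Q_i$ is a single oriented cycle $C_m$. The computation is elementary: a central element is a sum of closed paths $\sum_v e_v z e_v$, and the requirement that it commute with every arrow forces all nonscalar coefficients to vanish unless every vertex has in-degree and out-degree one, that is, unless $Q_i=C_m$; in that case the sum $x$ of the $m$ basic cycles generates $Z(\Bbbk C_m)=\Bbbk[x]$. When $Z(\Bbbk Q_i)=\Bbbk$ the center is a field, hence artinian, and Theorem \ref{yythm0.2} (equivalently \cite[Proposition 1.3]{BZ1}) shows at once that $\Bbbk Q_i$ is cancellative.

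The remaining case is the oriented cycle, and this is the step I expect to be the main obstacle, since here the center $\Bbbk[x]$ is not artinian and Theorem \ref{yythm0.2} does not apply. The plan is to exploit the standard graded isomorphism $\Bbbk C_m\cong M_m(\Bbbk[x])$ and to reduce to the classical cancellativity of $\Bbbk[x]$. Suppose $M_m(\Bbbk[x])[t]\cong M_m(\Bbbk[x,t])\cong B[s]$ via $\Phi$. Since $s$ is central, $c:=\Phi(s)$ lies in $Z(M_m(\Bbbk[x,t]))=\Bbbk[x,t]$, and $\Phi$ restricts to an isomorphism $Z(B)[s]\cong\Bbbk[x,t]=\Bbbk[x][t]$ with $s\mapsto c$. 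The cancellativity of the affine line (Abhyankar--Eakin--Heinzer \cite{AEH}) then forces $Z(B)\cong\Bbbk[x]$, and moreover identifies $\Bbbk[x,t]/(c)\cong\Phi(Z(B))\cong\Bbbk[x]$.

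Finally I would recover $B$ itself, not merely its center, by passing to the quotient by $s$: because $c$ is central, the ideal $c\,M_m(\Bbbk[x,t])$ equals $M_m(c\,\Bbbk[x,t])$, so
\[
B\;\cong\;B[s]/(sB[s])\;\cong\;M_m(\Bbbk[x,t])/(c)\;\cong\;M_m\!\bigl(\Bbbk[x,t]/(c)\bigr)\;\cong\;M_m(\Bbbk[x])\;\cong\;\Bbbk C_m,
\]
which closes the cyclic case and, combined with the block decomposition of the first paragraph, yields $B\cong\Bbbk Q$. The delicate points to nail down are the center computation for connected quivers and the graded identification $\Bbbk C_m\cong M_m(\Bbbk[x])$; once these are in place, the fact that the matrix structure survives the quotient by the central image $c$ of $s$ makes the reduction to the commutative cancellation theorem clean, so that no Azumaya- or Brauer-group input is required. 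As an alternative to the matrix reconstruction, one could instead try to detect $s$ directly through the $\mathbb{N}$-grading of $M_m(\Bbbk[x])$ and show that $\Bbbk C_m$ is detectable in the sense of the diagram above, deducing cancellativity from the implication detectable $\Rightarrow$ cancellative.
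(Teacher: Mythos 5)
Your reduction to connected components via central idempotents, your computation of $Z(\Bbbk Q_i)$, and your treatment of the case $Z(\Bbbk Q_i)=\Bbbk$ via Theorem \ref{yythm0.2} all match the paper's own proof (Lemmas \ref{yylem4.6}, \ref{yylem4.4} and Theorem \ref{yythm4.2}). The fatal problem is the cyclic case: the claimed ``standard graded isomorphism'' $\Bbbk C_m\cong M_m(\Bbbk[x])$ is false for $m\geq 2$. The path algebra of the oriented cycle is a hereditary order properly contained in $M_m(\Bbbk[x])$ (matrices that are, say, lower triangular modulo $x$), and it is \emph{not} Azumaya --- this is exactly what the paper records in Lemma \ref{yylem4.5}(1). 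A quick way to see that the two algebras are not isomorphic: the ideal of $\Bbbk C_m$ generated by the central element $w$ (the sum of the $m$ basic cycles) is the span of all paths of length at least $m$, so $\Bbbk C_m/(w)$ is the truncated path algebra $\Bbbk C_m/J^m$, which for $m\geq 2$ has nonzero nilpotent radical (the images of the arrows); but any isomorphism $\Bbbk C_m\cong M_m(\Bbbk[x])$ would restrict to an isomorphism of centers $\Bbbk[w]\cong\Bbbk[x]$, hence send $w$ to $\alpha x+\beta$ with $\alpha\in\Bbbk^{\times}$, and would induce $\Bbbk C_m/(w)\cong M_m(\Bbbk[x])/(\alpha x+\beta)\cong M_m(\Bbbk)$, a simple algebra --- a contradiction.

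This is not a cosmetic slip, because your strategy for the cycle was designed precisely to avoid ``Azumaya- or Brauer-group input,'' yet $\Bbbk C_m$ sits exactly in the case (center $\Bbbk[x]$, not Azumaya) where such input, or a substitute, is needed. Your quotient computation $B\cong M_m(\Bbbk[x,t])/(c)\cong M_m\bigl(\Bbbk[x,t]/(c)\bigr)$ works only because $M_m(-)$ is functorial in the underlying commutative ring, so the abstract isomorphism $\Bbbk[x,t]/(c)\cong\Bbbk[x]$ supplied by \cite{AEH} suffices. For the actual algebra $\Lambda=\Bbbk C_m$ one would instead need $\Lambda\otimes_{\Bbbk[w]}\bigl(\Bbbk[w,t]/(c)\bigr)\cong\Lambda$, and this depends on how $\Bbbk[w]$ sits inside $\Bbbk[w,t]/(c)$, i.e., on knowing that the isomorphism essentially preserves the center $\Bbbk[w]$ itself. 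That $Z$-retractability statement is what the paper proves in Proposition \ref{yypro4.10}, using the ideal cutting out the complement of the Azumaya locus (and, in characteristic zero, the Brown--Yakimov discriminant theorem), after which detectability and strong Hopfianness give cancellation; the paper also needs a base-change step (Lemma \ref{yylem3.11}) to reach non--algebraically-closed $\Bbbk$. Your closing alternative (``detect $s$ directly through the $\mathbb{N}$-grading'') is not developed and cannot work as stated, since the unknown algebra $B$ carries no a priori grading. So your proposal correctly handles all connected components except the oriented cycles $C_m$ with $m\geq 2$, and that is precisely the case occupying most of Section 4 of the paper.
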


We refer to \cite{KL} for the definition and basic
properties of the Gelfand-Kirillov dimension (or
GKdimension for short). In this paper an affine algebra
means that it is finitely generated over the base ring
as an algebra. It is proved in \cite[Theorem 0.5]{BZ1} 
that, if $A$ is an affine domain of GKdimension
two over an algebraically closed field of characteristic
zero and if $A$ is not commutative, then $A$ is cancellative.
In contrast, noncommutative affine prime (non-domain) 
algebras of GKdimension two need not be cancallative 
[Example \ref{yyex1.3}(5)].
Further there are affine commutative domains of GKdimension
two that are not cancellative, see \cite{Da} or Example \ref{yyex1.3}(2).
By a result of Abhyankar-Eakin-Heinzer \cite[Theorem 3.3]{AEH}, every
affine commutative domain of GKdimension one is cancellative. These 
results suggest the following question.

\begin{question}
\label{yyque0.5}
Is every affine prime $\Bbbk$-algebra of GKdimension one cancellative?
\end{question}

We answer this question in the following case.

\begin{theorem}
\label{yythm0.6}
Let $\Bbbk$ be algebraically closed.
Then every affine prime $\Bbbk$-algebra of GKdimension one is
cancellative.
\end{theorem}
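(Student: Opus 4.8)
The plan is to combine the structure theory of affine prime algebras of $\GKdim$ one with the rigidity--retractability--detectability chain recorded in the Introduction. First I would reduce to the situation where both algebras lie in the class: if $A[t]\cong B[s]$, then since $t$ and $s$ are central indeterminates, $B[s]$ is prime, affine, and of $\GKdim$ two, whence $B$ is again prime, affine, and of $\GKdim$ one (primeness and affineness pass between $B$ and $B[s]$, and a central polynomial extension raises $\GKdim$ by exactly one). Next, by the theorem of Small and Warfield, $A$ is a finite module over its center $Z:=Z(A)$, and $Z$ is an affine commutative domain of $\GKdim$ (equivalently, Krull dimension) one; the same holds for $B$ over $W:=Z(B)$. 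Thus $A$ is prime noetherian PI, and the hard part of the cancellation question is transferred to the one-dimensional centers.

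The core is a dichotomy for the center. I would prove the lemma: an affine commutative domain $R$ of $\GKdim$ one over an algebraically closed field $\Bbbk$ that admits a nonzero locally nilpotent derivation is isomorphic to the polynomial ring $\Bbbk[x]$. Indeed, for a nonzero locally nilpotent derivation $\partial$ the kernel $\ker\partial$ has transcendence degree $\operatorname{trdeg}_\Bbbk R-1=0$, so $\ker\partial$ is algebraic over $\Bbbk$ and hence equals $\Bbbk$; a one-dimensional affine domain carrying a nonzero locally nilpotent derivation with kernel $\Bbbk$ is forced to be $\Bbbk[x]$ (equivalently, the only affine curve admitting a nontrivial additive action is the affine line). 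Consequently, either $Z$ is $\LND$-rigid or $Z\cong\Bbbk[x]$.

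Suppose first that $Z\not\cong\Bbbk[x]$, so $Z$ is $\LND$-rigid. Every derivation $\delta$ of $A$ preserves the center, since for $z\in Z$ and all $a\in A$ one has $[\delta(z),a]=\delta([z,a])-[z,\delta(a)]=0$, so $\delta(z)\in Z$. Hence any locally nilpotent derivation of $A$ restricts to a locally nilpotent derivation of $Z$, which must vanish; that is, $A$ is $\LND_Z$-rigid. The implications $\LND_Z\text{-rigid}\Rightarrow Z\text{-retractable}\Rightarrow\text{detectable}\Rightarrow\text{cancellative}$ from the diagram then give that $A$ is cancellative.

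The remaining, and genuinely delicate, case is $Z\cong\Bbbk[x]$, which I expect to be the main obstacle: here $\partial/\partial x$ is a nonzero locally nilpotent derivation of the center, so the rigidity argument collapses. Applying the isomorphism $A[t]\cong B[s]$ to centers gives $\Bbbk[x,t]\cong W[s]$, and the cancellativity of $\Bbbk[x]$ (Abhyankar--Eakin--Heinzer \cite{AEH}) forces $W\cong\Bbbk[x]$ as well, so both $A$ and $B$ are prime PI orders, finite over a polynomial center. I would attack this subcase with the discriminant method of \cite{BZ1}: the discriminant $d(A/Z)\in Z=\Bbbk[x]$ is unchanged under the central polynomial extension (it does not involve $t$), so a central isomorphism must carry $d(A/Z)$, up to a unit, to $d(B/W)$, severely constraining the induced automorphism of $\Bbbk[x,t]$ and pinning down $B$ up to isomorphism whenever this discriminant is nonconstant. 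The crux is then the residual case of trivial (unit) discriminant --- for instance when $A$ is Azumaya over $\Bbbk[x]$, where $A\cong M_n(\Bbbk[x])$ because $\operatorname{Br}(\Bbbk[x])=0$ and projectives over $\Bbbk[x]$ are free --- which must be shown to be retractable (equivalently detectable) directly, completing the proof.
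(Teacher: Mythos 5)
Your three-way case division (center not $\Bbbk[x]$; center $\Bbbk[x]$ with $A$ not Azumaya; $A$ Azumaya, hence $A\cong M_n(\Bbbk[x])$) is exactly the paper's (Lemma \ref{yylem4.9} and Theorem \ref{yythm4.12}), but two of your three cases have genuine gaps. The first is characteristic: the theorem has no characteristic hypothesis, yet your first case rests on ordinary locally nilpotent derivations, and both the dichotomy lemma and the rigidity-to-retractability chain fail in characteristic $p$. The transcendence-degree argument breaks down because $\ker(d/dx)=\Bbbk[x^p]$ has transcendence degree one, not zero; likewise $\ker(d/ds)=B[s^p]\neq B$ inside $B[s]$, so $\LND$-rigidity no longer pins anything to $B$ --- this is precisely why the paper introduces higher (Hasse--Schmidt) derivations in Definitions \ref{yydef1.7} and \ref{yydef1.8}. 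The paper sidesteps derivations entirely in this case: by Abhyankar--Eakin--Heinzer \cite{AEH}, $Z(A)\not\cong\Bbbk'[x]$ implies $Z(A)$ is strongly retractable in every characteristic, and Lemmas \ref{yylem3.2} and \ref{yylem3.8} transfer this to strong detectability of $A$. Your middle case has a related problem: the link ``discriminant nonunit iff not Azumaya'' is Brown--Yakimov, which the paper explicitly notes cannot be invoked in positive characteristic; there Proposition \ref{yypro4.10}(2) replaces the discriminant by the ideal $N(A)$ cutting out the non-Azumaya locus, using $N(A[t_1,\cdots,t_n])=N(A)[t_1,\cdots,t_n]$ to force $\phi(Z(A))=Z(B)$.

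The second gap is fatal as stated: in the Azumaya case you propose to show $M_n(\Bbbk[x])$ is ``retractable (equivalently detectable) directly,'' but it is neither, so no such argument can exist. Just as $\Bbbk[x]$ is cancellative but not detectable [Remark \ref{yyrem3.7}(1)], the automorphism of $M_n(\Bbbk[x,t])=M_n(\Bbbk)[x,t]$ induced by swapping $x$ and $t$ sends $t$ to $x$, and $s$ does not lie in $M_n(\Bbbk[x])\{x\}$; conjugation tricks as in Remark \ref{yyrem3.7}(3) kill retractability as well. (Also, retractable and detectable are not equivalent properties: Example \ref{yyex3.3}.) The paper's actual argument for this case, Lemma \ref{yylem4.11}, is different in kind: given $\phi:A[\underline{t}]\cong B[\underline{s}]$ with $A=M_n(\Bbbk[x])$, cancellativity of $\Bbbk[x]$ gives $Z(B)\cong\Bbbk[x]$; if $B$ is \emph{not} Azumaya one applies Proposition \ref{yypro4.10} to $B$ (which is then strongly cancellative) to conclude $A\cong B$, while if $B$ is Azumaya then $B\cong M_{n'}(\Bbbk[x])$ and comparing the two matrix algebras forces $n=n'$. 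So cancellation for the Azumaya case is obtained by exploiting cancellativity of the \emph{other} algebra or a direct size count, never by retractability or detectability of $M_n(\Bbbk[x])$ itself.
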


Our third aim is to introduce some basic questions. 
For example, Theorem \ref{yythm0.6} leads naturally 
to the following two questions.

\begin{question}
\label{yyque0.7}
Let $\Bbbk$ be algebraically closed of characteristic zero
and let $A$ be an
affine prime $\Bbbk$-algebra of GKdimension one. What can we say
about the (outer) automorphism group of $A$.
\end{question}

\begin{question}
\label{yyque0.8}
Let $A$ be an algebra of global dimension one
{\rm{(}}respectively, Krull dimension one{\rm{)}}. Is
then $A$ is cancellative?
\end{question}

A few other questions are listed in Section 5.

The paper is organized as follows. Section 1 contains definitions, known
examples and preliminaries. In Sections 2 and 3, we introduce retractable
and detectable properties. In Section 4, we prove the
Theorems \ref{yythm0.2}, \ref{yythm0.4} and \ref{yythm0.6}. In Section 5,
some comments and questions are given.

\section{Definitions and Preliminaries}
\label{yysec1}

We recall some definitions, known examples and basic properties.
First we copy the definition in \cite[Definition 1.1]{BZ1}.

\begin{definition}
\label{yydef1.1}
Let A be an algebra.
\begin{enumerate}
\item[(1)]
We call $A$ {\it cancellative} if any algebra isomorphism 
$\phi: A[t] \cong B[s]$ for an algebra $B$ implies that $A\cong B$.
\item[(2)]
We call $A$ {\it strongly cancellative} if, for each $n \geq 1$, 
any algebra isomorphism
$$A[t_1, \cdots, t_n]\cong B[s_1,\cdots,s_n]$$
for an algebra $B$ implies that $A \cong B$. 
\end{enumerate}
\end{definition}

In \cite[p. 311]{AEH}, the {\it strongly cancellative} property is called
the {\it invariant} property. In this paper we will study various versions
of the cancellative property.

\begin{definition}
\label{yydef1.2}
Let $A$ be an algebra and let $\LND(A)$ be the collection
of locally nilpotent $k$-derivations of $A$.
\begin{enumerate}
\item[(1)]
\cite{Mak}
The {\it Makar-Limanov invariant} of $A$ is defined to be
$$ML(A) = \bigcap_{\delta\in \LND(A)}\ker(\delta).$$
\item[(2)]
\cite{BZ1}
$A$ is called {\it $\LND$-rigid} if
$ML(A) = A$, or equivalently, $\LND(A) = \{0\}$.
\item[(3)]
\cite{BZ1}
$A$ is called {\it strongly $\LND$-rigid} if
$ML(A[t_1,\cdots, t_n]) = A$ for all $n\geq 1$.
\item[(4)]
The {\it Makar-Limanov center} of $A$ is defined to be
$$ML_Z(A) = ML(A)\cap Z(A)$$
where $Z(A)$ denotes the center of $A$.
\item[(5)]
We say that $A$ is $\LND_Z$-rigid if $ML_Z(A[t]) = Z(A)$, 
and {\it strongly $\LND_Z$-rigid} if $ML_Z(A[t_1,\cdots, t_n]) = Z(A)$ 
for all $n\geq 1$.
\end{enumerate}
\end{definition}

Note that we use $A[t]$ (instead of $A$) in the definition 
of $\LND_Z$-rigidity [Definition \ref{yydef1.2}(5)] which 
is slightly different from the $\LND$-rigidity in Definition 
\ref{yydef1.2}(2). Next we give some known examples. Recall 
that if an affine domain (containing $\mathbb{Q}$) of finite 
GKdimension is LND-rigid, then it is cancellative 
\cite[Theorem 0.4]{BZ1}.

\begin{example}
\label{yyex1.3}
Let $\Bbbk$ be an algebraically closed field of characteristic zero.
\begin{enumerate}
\item[(1)]
Let $A$ be an affine commutative domain of GKdimension
one. If $A$ is not isomorphic to $\Bbbk[x]$, then
$A$ is $\LND$-rigid \cite[Lemma 2.3]{CM}. In fact, 
it follows from \cite[Main Theorem, p.6]{CM} that
$A$ is strongly $\LND$-rigid. An earlier result of
\cite[Theorem 3.3 and Corollary 3.4]{AEH} states that
every affine commutative domain of GKdimension one is
cancellative.
\item[(2)]
Not every affine commutative domain of GKdimension two
is cancellative. The following example is due to 
Danielewski \cite{Da}. Let $n\geq 1$ and let $B_n$ be 
the coordinate ring of the surface $x^n y=z^2-1$ over 
$\Bbbk:={\mathbb C}$. Then $B_i\not\cong B_j$ if 
$i\neq j$, but $B_i[t]\cong B_j[s]$ for all $i,j\geq 1$, 
see \cite{Fi, Wi} for more details.
\item[(3)]
The famous Zariski Cancellation Problem (ZCP) asks if 
$\Bbbk[x_1,\dots,x_n]$ is cancellative. This is still 
open for any $n\geq 3$ when ${\rm{char}}\; \Bbbk=0$. 
Also see Example \ref{yyex1.6} when ${\rm{char}}\; \Bbbk>0$.
\item[(4)]
Here is an easy way of producing noncommutative algebras 
that are not cancellative. Starting with algebras $B_i$ 
in part (2). Let $A$ be a noncommutative algebra. If we 
can verify that $B_1\otimes A\not\cong B_2\otimes A$
(this is the case when the center of $A$ is ${\mathbb C}$), 
then $B_1\otimes A$ is not cancellative. For example, if 
$A$ is the $n$th Weyl algebra, $B_1\otimes A$ is not 
cancellative.
\item[(5)]
As a consequence of part (4), by taking $A$ to be the 
matrix algebra $M_{n}({\mathbb C})$ for some $n>1$, $M_n(B_1)$ 
is a noncommutative affine prime ${\mathbb C}$-algebra 
of GKdimension two that is not cancellative. See Theorem 
\ref{yythm0.6} for a related result. 
\end{enumerate}
\end{example}

One focus of this paper is to show that some classes of noncommutative
algebras are cancellative. The following two lemmas contain
elementary facts.

\begin{lemma}
\label{yylem1.4} Let $n$ be a positive integer. Let $A$ and $B$ be
two algebras such that $A[t_1,\cdots,t_n]\cong B[s_1,\cdots,s_n]$. 
Then the following hold.
\begin{enumerate}
\item[(1)]
$A$ is commutative if and only if $B$ is.
\item[(2)]
$A$ is left {\rm{(}}or right{\rm{)}} noetherian if and only if $B$ is.
\item[\rm (3)]
Assume that $A$ is left noetherian. Then,
$${\rm Kdim} A={\rm Kdim} B.$$
Here ${\rm Kdim}$ denotes the left Krull dimension.
\item[(4)]
$A$ is left {\rm{(}}or right{\rm{)}} artinian if and only if $B$ is.
\item[(5)]
$\GKdim A=\GKdim B$.
\item[(6)]
$A$ is a field if and only if $B$ is.
\item[(7)]
$A$ is a division ring if and only if $B$ is.
\item[(8)]
$A$ is a finite direct sum of fields if and only if $B$ is.
\item[(9)]
$A$ is a finite direct sum of division rings if and only if $B$ is.
\item[(10)]
$A$ is local left {\rm{(}}or right{\rm{)}} artinian if and only if $B$ is.
\end{enumerate}
\end{lemma}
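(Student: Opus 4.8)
The plan is to prove each of the ten equivalences by exploiting the central variables $t_1,\dots,t_n$ and $s_1,\dots,s_n$ together with standard persistence-of-structure arguments for polynomial extensions. The key observation is that an isomorphism $A[t_1,\dots,t_n]\cong B[s_1,\dots,s_n]$ reduces every claim to two separate facts: first, that the stated property passes up to a polynomial extension in finitely many central variables, and second, that it descends from such an extension. Since $A[t_1,\dots,t_n]$ and $B[s_1,\dots,s_n]$ are isomorphic, if the property in question is equivalent to a property of the polynomial extension, then it holds for $A$ iff it holds for $B$.

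First I would treat the properties that are themselves invariant under adjoining central variables and descend as well. For (1), commutativity of $A$ is equivalent to commutativity of $A[t_1,\dots,t_n]$, since the $t_i$ are central, and likewise for $B$; combined with the isomorphism this gives the equivalence immediately. For (2) I would invoke the standard fact (Hilbert basis theorem for Ore/polynomial extensions) that $A$ is left noetherian iff $A[t_1,\dots,t_n]$ is, noting that adjoining a central indeterminate preserves and reflects the noetherian condition; the isomorphism then closes the argument. For (5), GKdimension satisfies $\GKdim A[t_1,\dots,t_n]=\GKdim A+n$ (when $\GKdim A$ is finite, and with the appropriate convention in the infinite case), so $\GKdim A+n=\GKdim B+n$ forces $\GKdim A=\GKdim B$. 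For (3), I would use that $\Kdim A[t]=\Kdim A+1$ for left noetherian $A$ (again with care about the convention), iterate, and cancel $n$ on both sides; here I must first use (2) to know $A$ left noetherian implies $B$ left noetherian so that $\Kdim B$ is defined.

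The remaining items (4), (6)--(10) are structural ring-theoretic properties, and for each I would reduce to the polynomial extension. For (4), artinian: a polynomial ring $A[t]$ is never artinian (it contains the ascending or non-stabilizing chain generated by powers of $t$), yet $A$ artinian still implies $A[t_1,\dots,t_n]$ has finite Krull dimension $n$, so I would instead characterize ``$A$ is artinian'' as ``$A[t_1,\dots,t_n]$ is noetherian of Krull dimension $n$'' and apply (2) and (3). For (6)--(9), the cleanest route is to observe that $A$ is a field (resp.\ division ring, finite direct sum of fields, finite direct sum of division rings) precisely when $A$ is artinian together with the corresponding semisimplicity or commutativity condition, and then bootstrap from (1), (4), and the Artin--Wedderburn structure; concretely, a finite direct sum of division rings is exactly a semisimple artinian ring, which I can detect on $A$ via $A[t_1,\dots,t_n]$ being noetherian of Krull dimension $n$ with the right reduced/semiprime behavior. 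Item (10), local artinian, follows by combining (4) with the fact that locality (having a unique maximal left ideal) is equivalent to $A/J(A)$ being a division ring, which is tracked by the previous parts.

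The main obstacle I anticipate is item (3), the Krull dimension equality, and the subtleties hiding in (6)--(9). For (3) the formula $\Kdim A[t]=\Kdim A+1$ holds for left noetherian rings but requires genuine care---it is the noncommutative analogue and depends on results about Krull dimension of polynomial extensions; I would need the left noetherian hypothesis (provided by (2)) precisely to guarantee both sides are defined and the additivity formula applies. For (6)--(9) the delicate point is that being a field or division ring is \emph{not} visibly preserved by polynomial extension in an obvious elementary way, so the real work is to find the correct intrinsic characterization of each such class in terms of invariants already shown to transfer---namely commutativity, the artinian/noetherian dichotomy, and Krull dimension---so that the equivalence for $A$ and $B$ follows formally from the isomorphism of their polynomial extensions. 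Once these characterizations are in place, each equivalence is a short deduction, and (10) is then essentially a corollary of (4) and (7).
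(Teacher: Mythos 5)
The paper itself gives no proof of this lemma (it is presented as a list of ``elementary facts''), so your proposal has to stand on its own. Your overall strategy --- transfer each property through a characterization that passes both up and down a central polynomial extension --- is the right one, and your treatments of (1), (2), (3), (5) and of (4) (via ``left artinian $=$ left noetherian of Krull dimension $0$,'' so that $A$ is left artinian iff $A[t_1,\dots,t_n]$ is left noetherian of Krull dimension $n$) are correct. But there is a genuine gap in (6)--(9). The characterization you lean on is false: a finite direct sum of division rings is \emph{not} the same thing as a semisimple artinian ring --- $M_2(\Bbbk)$ is semisimple artinian and is not a direct sum of division rings. Worse, the invariants you propose to bootstrap from (commutativity, the noetherian/artinian conditions, Krull dimension) can never detect fields or division rings: $\Bbbk\oplus\Bbbk$ and $\Bbbk[x]/(x^2)$ are commutative artinian of Krull dimension $0$ and are not fields. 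The missing idea is to add two more transferable properties: being a \emph{domain} and being \emph{reduced}. Both pass up and down central polynomial extensions (descent is trivial since $A\subseteq A[t]$; ascent is a leading-coefficient induction, e.g.\ if $f=\sum_{i\le m} a_it^i$ satisfies $f^k=0$ then $a_m^k=0$, so $a_m=0$ when $A$ is reduced). With these in hand the correct characterizations are: division ring $=$ left artinian domain; field $=$ commutative left artinian domain; finite direct sum of division rings $=$ left artinian reduced ring (artinian plus reduced forces $J(A)=0$, and reducedness kills the matrix factors in Artin--Wedderburn); finite direct sum of fields $=$ the commutative version. Each is now a conjunction of properties already known to transfer, and (6)--(9) follow.

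Your sketch of (10) also hides the essential step. Saying that $A/J(A)$ ``is tracked by the previous parts'' requires knowing that the isomorphism $\phi$ induces an isomorphism modulo Jacobson radicals, i.e.\ that $J(A[t_1,\dots,t_n])=J(A)[t_1,\dots,t_n]$. This is true here, but needs an argument: for $A$ left artinian, $J(A)$ is nilpotent, so $J(A)[\underline{t}]$ is a nilpotent ideal with $A[\underline{t}]/J(A)[\underline{t}]\cong (A/J(A))[\underline{t}]$, and the latter is semiprime noetherian, hence has zero Jacobson radical (in general this identity is Amitsur's theorem, which is not elementary). Once this is in place, $\phi$ induces $(A/J(A))[\underline{t}]\cong (B/J(B))[\underline{s}]$, your part (7) shows $B/J(B)$ is a division ring, and combined with (4) you get that $B$ is local left artinian. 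So the skeleton of your proof is sound, but as written the key reductions for (6)--(9) rest on a false equivalence, and (10) omits the radical computation that makes it work.
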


%
%
%
%
%
%
%
%
%

For any algebra $A$, let $CI(A)$ denote the set of central
idempotents of $A$. There are two trivial central idempotents
$0,1\in CI(A)$. We say $A$ is {\it indecomposable} if $A$ is not
isomorphic to $A_1\oplus A_2$. It is clear that $A$ is indecomposable
if and only if $CI(A)=\{0,1\}$. The next lemma is clear.

\begin{lemma}
\label{yylem1.5}
Let $A$ be an algebra and let $Z(A)$ be the center of $A$.
\begin{enumerate}
\item[(1)]
$CI(A)=CI(Z(A))$.
\item[(2)]
The following are equivalent.
\begin{enumerate}
\item[(a)]
$CI(A)$ is finite.
\item[(b)]
$|CI(A)|= 2^n$ for some integer $n$.
\item[(c)]
$A=A_1\oplus A_2\oplus \cdots \oplus A_n$ where
each $A_i$ is an indecomposable algebra.
\end{enumerate}
\item[(3)]
If $A$ is ${\mathbb N}$-graded, then
$CI(A)=CI(A_0)$.
\item[(4)]
$CI(A[t_1,\cdots,t_n])=CI(A)$.
\end{enumerate}
\end{lemma}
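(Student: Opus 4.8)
The plan is to handle the four parts in order, exploiting that $CI(A)$ depends only on the center. Part (1) is a matter of unwinding the definitions: an element of $CI(A)$ is an idempotent lying in $Z(A)$, while an element of $CI(Z(A))$ is an idempotent of $Z(A)$ that is central in $Z(A)$; since $Z(A)$ is commutative every element of $Z(A)$ is central in it, so both sets equal $\{e\in Z(A):e^2=e\}$. For part (2) the structural input is that, by (1), $CI(A)$ is precisely the set of idempotents of the commutative ring $Z(A)$, and this set is a Boolean algebra under $e\wedge f=ef$, $e\vee f=e+f-ef$, and complement $e\mapsto 1-e$. I would prove (a)$\Leftrightarrow$(b) by invoking the classification of finite Boolean algebras: a finite one is isomorphic to the power set of its set of atoms, hence has cardinality $2^n$. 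For (a)/(b)$\Leftrightarrow$(c) I would pass between a finite family of pairwise orthogonal primitive central idempotents $e_1,\dots,e_n$ with $\sum_i e_i=1$ and the decomposition $A=\bigoplus_i Ae_i$; each $Ae_i$ is indecomposable exactly because $e_i$ is an atom (so $CI(Ae_i)=\{0,e_i\}$), and conversely a decomposition into $n$ indecomposable factors forces $CI(A)\cong\{0,1\}^n$.

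The only step that needs genuine care is part (3), and the engine is a lowest-degree argument. Writing a central idempotent as a finite sum of homogeneous components $e=\sum_{i\ge 0}e_i$, I would first record that each $e_i$ is itself central, since the center of an $\mathbb{N}$-graded algebra is a graded subspace. Comparing the degree-$0$ components of the identity $e^2=e$ gives $e_0^2=e_0$. Now suppose some component of positive degree is nonzero and let $d\ge 1$ be the least such degree; comparing degree-$d$ components and using that the $e_i$ are central and that $e_1=\dots=e_{d-1}=0$ yields $2e_0e_d=e_d$. Multiplying this relation by $e_0$ and using $e_0^2=e_0$ forces $e_0e_d=0$, whence $e_d=2e_0e_d=0$, a contradiction. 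Thus $e=e_0\in A_0$, so that $CI(A)=CI(A_0)$. I expect this homogeneity step to be the main obstacle, since everything else is formal; note that the argument is characteristic-free, the apparent factor $2$ being eliminated by multiplying through by $e_0$.

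Part (4) then reduces to (3). I would regard $A[t_1,\dots,t_n]$ as $\mathbb{N}$-graded by total degree in the central variables $t_i$, with degree-zero part $A$. The homogeneity argument of part (3) shows that every central idempotent of $A[t_1,\dots,t_n]$ lies in degree zero, that is, in $A$; and since the $t_i$ are central, any idempotent of $A$ automatically commutes with all of $A[t_1,\dots,t_n]$ and so is a central idempotent there. The two inclusions give $CI(A[t_1,\dots,t_n])=CI(A)$.
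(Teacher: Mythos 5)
Parts (1), (2) and (4) of your proof are correct. There is no approach to compare against: the paper offers no argument for this lemma at all (it is introduced with ``The next lemma is clear''), so your Boolean-algebra treatment of (2) and your lowest-degree argument are exactly the kind of proof the authors had in mind, and the degree argument in (3)--(4) is indeed the one nontrivial point.

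However, part (3) has a real gap at its final step, and the gap cannot be closed because the asserted equality is false as stated. Your argument (center of a graded algebra is graded, $e_0^2=e_0$, and the minimal-degree relation $2e_0e_d=e_d$ killed by multiplying by $e_0$) correctly proves the inclusion $CI(A)\subseteq CI(A_0)$: every central idempotent of $A$ is concentrated in degree $0$. The sentence ``so that $CI(A)=CI(A_0)$'' then silently asserts the reverse inclusion $CI(A_0)\subseteq CI(A)$, which is not justified and is false in general: an idempotent that is central in $A_0$ need not be central in $A$. Concretely, take $A=\Bbbk Q$ where $Q$ is the quiver with two vertices and one arrow. Then $A_0\cong \Bbbk\times\Bbbk$ is commutative, so $CI(A_0)=\{0,e_1,e_2,1\}$ has four elements, while $Z(A)=\Bbbk$ by Lemma \ref{yylem4.4}, so $CI(A)=CI(Z(A))=\{0,1\}$ by part (1); indeed the vertex idempotents do not commute with the arrow. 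So the equality in (3) --- which is the paper's own formulation --- should be corrected to $CI(A)=CI(A_0)\cap Z(A)$, or weakened to $CI(A)\subseteq CI(A_0)$; your write-up should flag this rather than assert the equality. Note that your part (4) survives intact: there the reverse inclusion does hold, precisely because the positive-degree generators $t_1,\dots,t_n$ of $A[t_1,\dots,t_n]$ are central, which is exactly the point you invoke (though ``any idempotent of $A$'' there should read ``any central idempotent of $A$''). Since part (4) is the only part of the lemma the paper later uses (in the proof of Lemma \ref{yylem4.6}), the defect in (3) does not propagate.
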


%

To use the method of reduction modulo $p$, we need
consider the case when the base field has positive
characteristic. We start with the following example.

\begin{example}\cite{Gu1,Gu2}
\label{yyex1.6}
Let $\Bbbk$ be a base field of positive characteristic. Then
$\Bbbk[x_1,\cdots, x_n]$ is not cancellative for every $n\geq 3$.
This is a couterexample to the ZCP in positive characteristic.

Similar to Example \ref{yyex1.3}(4), one can construct noncommutative
algebras over a field of positive characteristic that are not cancellative.
\end{example}

When ${\rm char}\; \Bbbk>0$, the derivations in Definition \ref{yydef1.2}
need to be replaced by higher derivations, which we now recall.

\begin{definition}
\label{yydef1.7} Let $A$ be an algebra.
\begin{enumerate}
\item[(1)] \cite{HS}
A {\it higher derivation} (or {\it Hasse-Schmidt derivation})
on $A$ is a sequence of $k$-linear endomorphisms
$\partial:=\{\partial_i\}_{i=0}^{\infty}$ such that:
$$\partial_0 = id_A, \quad
\text{and} \quad  \partial_n(ab) =\sum_{i=0}^n
\partial_i(a)\partial_{n-i}(b)
$$
for all $a, b \in A$ and all $n\geq 0$. The collection of
higher derivations is denoted by ${\rm Der}^H(A)$.
\item[(2)]
A higher derivation is called {\it locally nilpotent} if
\begin{enumerate}
\item
given every $a\in A$ there exists $n\geq 1$ such that 
$\partial_i(a)=0$ for all $i\geq n$,
\item
the map
$$G_{\partial,t}:A[t]\to A[t] \qquad\qquad\qquad\qquad $$
defined by
$$ \qquad\qquad \qquad\qquad
a\mapsto \sum_{i=0}^{\infty} \partial_i(a)t^i, \;\; t\mapsto t,\;\;
{\text{for all $a\in A$,}}$$
is an algebra automorphism of $A[t]$.
\end{enumerate}
\item[(3)]
The collection of locally nilpotent higher derivations is denoted by
$\LND^H(A)$.
\item[(4)]
For every $\partial\in {\rm Der}^H(A)$, the kernel of $\partial$
is defined to be
$$\ker \partial =\bigcap_{i\geq 1} \ker \partial_i.$$
\end{enumerate}
\end{definition}

\begin{definition}
\label{yydef1.8}
Let $A$ be an algebra.
\begin{enumerate}
\item[(1)] \cite{Mak, BZ1}
The {\it Makar-Limanov$^H$ invariant} of $A$ is defined to be
\begin{equation}
\label{E1.8.1}\tag{E1.8.1}
ML^H(A) \ = \ \bigcap_{\delta\in {\rm LND}^H(A)} {\rm ker}(\delta).
\end{equation}
\item[(2)] \cite{BZ1}
We say that $A$ is \emph{$\LND^H$-rigid} if $ML^H(A)=A$,
or $\LND^H(A)=\{0\}$.
\item[(3)]\cite{BZ1}
$A$ is called \emph{strongly $\LND^H$-rigid} if $ML^H(A[t_1,\ldots,t_n])=A$, for all
$n\geq 0$.
\item[(4)]
The {\it Makar-Limanov$^H$ center} of $A$ is defined to be
\begin{equation}
\notag
ML^H_Z(A) =ML^H(A) \cap Z(A).
\end{equation}
\item[(5)]
$A$ is called \emph{strongly $\LND^H_Z$-rigid} if 
$ML^H_Z(A[t_1,\ldots,t_n])=Z(A)$, for all
$n\geq 0$.
\end{enumerate}
\end{definition}

In \cite[Theorem 3.3]{BZ1}, the strong $\LND$-rigidity 
($\LND^H$-rigidity) is the key to prove that several classes
of algebras are cancellative. However, if $A$ is not a domain 
(which is the case we are considering in the present paper), 
$A$ is hardly $\LND$-rigid (respectively, $\LND^H$-rigid).

\begin{lemma}
\label{yylem1.9}
Let $A$ be an algebra.
\begin{enumerate}
\item[(1)]
If $A$ has a non-central nilpotent element, then
$A$ is not $\LND$-rigid.
\item[(2)]
If $A$ is a prime algebra that is not a domain,
then every nilpotent element is not central.
As a consequence, $A$ is not $\LND$-rigid.
\item[(3)]
Let $A$ be prime. If $A$ is $\LND$-rigid, then
$A$ is a domain.
\end{enumerate}
\end{lemma}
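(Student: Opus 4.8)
The plan is to treat the three parts in order, obtaining (3) as the contrapositive of the consequence in (2) and reducing (2) to (1) by manufacturing a non-central nilpotent element. Throughout, recall from Definition \ref{yydef1.2}(2) that failing to be $\LND$-rigid means exhibiting a single nonzero element of $\LND(A)$.

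For (1), the natural candidate is an inner derivation. Given a non-central nilpotent $a$ with $a^{n}=0$ (note $a\neq 0$ automatically, since $0$ is central), I would set $\delta=\operatorname{ad}_a$, that is $\delta(x)=ax-xa$. This is a $k$-derivation by the Leibniz identity, valid for any inner derivation because $k$ is central, and it is nonzero precisely because $a$ is non-central: some $x$ satisfies $ax\neq xa$. The only point needing verification is local nilpotence. Writing $L_a,R_a$ for left and right multiplication by $a$, one has $\delta=L_a-R_a$, where $L_a$ and $R_a$ commute (associativity) and are each nilpotent of index at most $n$ (since $a^{n}=0$). Expanding $(L_a-R_a)^{2n-1}$ by the binomial theorem, every term carries a factor $L_a^{m}$ or $R_a^{m}$ with $m\geq n$ and hence vanishes, so $\delta$ is globally nilpotent, in particular locally nilpotent. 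Thus $\delta\in\LND(A)\setminus\{0\}$ and $A$ is not $\LND$-rigid.

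For (2), I would first exclude nonzero central nilpotents, then extract a nonzero nilpotent from the failure of the domain property, and finally invoke (1). Suppose $z\in Z(A)$ is nilpotent and $z\neq 0$; choose $n\geq 2$ minimal with $z^{n}=0$, so that $w:=z^{n-1}\neq 0$. Using centrality, $wAw=z^{n-1}Az^{n-1}=z^{2(n-1)}A=0$ because $2(n-1)\geq n$, contradicting primeness. Hence every nonzero nilpotent element is non-central, which is the first assertion. For the consequence, assume $A$ is not a domain, so $ab=0$ for some nonzero $a,b$. Primeness gives $bAa\neq 0$, so there is $c$ with $x:=bca\neq 0$, and then $x^{2}=bc(ab)ca=0$. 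Thus $x$ is a nonzero nilpotent, necessarily non-central by what was just shown, and (1) yields that $A$ is not $\LND$-rigid.

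For (3), the statement is exactly the contrapositive of the consequence in (2): a prime algebra that is $\LND$-rigid cannot fail to be a domain. The arguments are all short; the only places requiring genuine care are the local nilpotence of $\operatorname{ad}_a$ in (1), handled by the commuting operators $L_a,R_a$, and the twofold use of primeness in (2), once to force $wAw=0$ to collapse $w$ and once to produce the nonzero square-zero element $bca$. I anticipate no real obstacle beyond this bookkeeping.
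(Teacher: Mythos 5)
Your proof is correct and follows essentially the same route as the paper: part (1) via the inner derivation $\operatorname{ad}_a$ of a non-central nilpotent, part (2) by ruling out nonzero central nilpotents in a prime ring and producing the square-zero element $bca$ from $ab=0$ and primeness, and part (3) as the contrapositive of (2). The only difference is that you spell out details the paper leaves implicit, namely the (global) nilpotence of $\operatorname{ad}_a$ via the commuting operators $L_a,R_a$ and a direct argument in place of the fact that the center of a prime ring is a domain.
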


\begin{proof}
(1) Let $x$ be a non-central nilpotent element.
Then $ad_x: a\mapsto xa-ax$ is a nonzero LND.
So $A$ is not $\LND$-rigid.

(2) Since $A$ is prime, $Z(A)$ is a domain.
So every nilpotent element is not in $Z(A)$.

Since $A$ is not a domain, there are $0\neq x,y\in A$
such that $xy=0$. Since $A$ is prime, $yAx\neq 0$.
Let $f=yax\neq 0$ for some $a\in A$. Then $f^2=0$.
By part (1) and the assertion we just proved,
$A$ is not $\LND$-rigid.

(3) This follows from part (2).
\end{proof}

By Lemma \ref{yylem1.9}, all prime algebras that are not domains are
not $\LND$-rigid. However, in the next section, we will show that many
non-domain prime algebras are $\LND_Z$-rigid.

\section{$\LND_Z$-rigidity controls retractability}
\label{yysec2}
In this and the next sections we study two properties
that are closely related to the cancellative property. Retractable
property, see the next definition, was studied in a paper
of Abhyankar-Eakin-Heinzer \cite{AEH}. In \cite[p. 311]{AEH}
it was called {\it strongly invariant} property.  But we changed
the term {\it invariant} to {\it strongly cancellative}, see
Definition \ref{yydef1.1}(2), and the term {\it strongly invariant}
to {\it strongly retractable}, see the next.

\begin{definition}
\label{yydef2.1}
Let $A$ be an algebra.
\begin{enumerate}
\item[(1)]
$A$ is called {\it retractable} if, for any algebra $B$, any
algebra isomorphism $\phi: A[t] \cong B[s]$ implies that
$\phi(A)=B$.
\item[(2)] \cite[p. 311]{AEH}
$A$ is called {\it strongly retractable} if, for any algebra $B$ and integer
$n\geq 1$, any algebra isomorphism $\phi: A[t_1,\dots, t_n] 
\cong B[s_1,\dots, s_n]$ implies that
$\phi(A)=B$.
\end{enumerate}
\end{definition}

\begin{example}
\label{yyex2.2}
Let $A$ be an affine commutative domain of GKdimension
one over a field $\Bbbk$.
By a result of \cite[Theorerm 3.3 and Corollary 3.4]{AEH},
if $A$ is not $\Bbbk'[x]$ for some field extension
$\Bbbk'\supseteq \Bbbk$, then it is strongly
retractabe; if $A=\Bbbk'[x]$ for some field extension
$\Bbbk'\supseteq \Bbbk$, then it is strongly
cancellative (but not retractable).
\end{example}

If $A$ is retractable, then every algebra automorphism
$\phi: A[t]\to A[t]$ restricts to an automorphism $\phi\mid_A: A\to A$.
This implies that there is a retraction of the
natural embedding
$$\Aut(A)\to \Aut(A[t]).$$
As a consequence, $\Aut(A[t])$ is a semidirect product of
$\Aut(A)\rtimes N$ for some normal subgroup $N\subseteq
\Aut(A[t])$.

Every strongly retractable algebra is retractable, and hence obviously,
cancellative. For every fixed integer $n\geq 1$, $A[t_1,\cdots,t_n]$
may be abbreviated as
$A[\underline{t}]$ and $B[s_1,\cdots,s_n]$ as $B[\underline{s}]$.

\begin{lemma}
\label{yylem2.3}
If $A$ is a finite direct sum of division rings, then $A$ is strongly
retractable.
\end{lemma}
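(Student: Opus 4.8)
The plan is to prove that if $A = D_1 \oplus \cdots \oplus D_m$ is a finite direct sum of division rings, then $A$ is strongly retractable. So suppose we have an isomorphism $\phi \colon A[\underline{t}] \cong B[\underline{s}]$ for some algebra $B$ and some $n \geq 1$; I must show $\phi(A) = B$. The first step is to understand $A[\underline{t}]$ intrinsically. Since $A = \bigoplus_{i} D_i$, we have $A[\underline{t}] = \bigoplus_i D_i[\underline{t}]$, where each $D_i[\underline{t}]$ is a polynomial ring over a division ring. The key observation is that the subalgebra $A \subseteq A[\underline{t}]$ can be characterized purely algebraically, so that it is preserved by any isomorphism. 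My candidate characterization is that $A$ is precisely the set of elements $a \in A[\underline{t}]$ that lie in a maximal finite-dimensional-over-its-center division-ring-like piece, or more robustly, that $A$ consists of the ``bounded'' elements. The cleanest route is to identify $A$ as a distinguished subalgebra via idempotents and invertibility.

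Concretely, the second step is to decompose both sides using central idempotents. By Lemma \ref{yylem1.5}(4), $CI(A[\underline{t}]) = CI(A)$ and $CI(B[\underline{s}]) = CI(B)$, and $\phi$ induces a bijection $CI(A) \cong CI(B)$. Using Lemma \ref{yylem1.5}(1), the central idempotents of $A = \bigoplus D_i$ are exactly the partial sums of the primitive central idempotents $e_1, \dots, e_m$ (where $e_i$ is the identity of $D_i$). Since $\phi$ respects the idempotent structure and its lattice, $B$ decomposes as $B = \bigoplus_j C_j$ into the same number of indecomposable pieces, and $\phi$ matches each indecomposable summand $D_i[\underline{t}] = e_i A[\underline{t}]$ with a summand $C_j[\underline{s}] = \phi(e_i) B[\underline{s}]$. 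This reduces the problem to the indecomposable case: it suffices to show that a single division ring $D$ is strongly retractable, i.e., if $D[\underline{t}] \cong C[\underline{s}]$ then $\phi(D) = C$.

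The third step, and what I expect to be the main technical core, is the indecomposable case. Here I want to characterize $D$ inside $D[\underline{t}]$ intrinsically. The natural invariant is invertibility: in $D[\underline{t}]$, the units are exactly the nonzero elements of $D$ (a nonzero polynomial of positive degree in central variables over a division ring is not invertible, since $D[\underline{t}]$ is a domain with a well-defined notion of degree). Thus $D = \{0\} \cup D^{\times} = \{0\} \cup (D[\underline{t}])^{\times}$, and this set together with $0$ is manifestly an isomorphism invariant. Since $\phi$ carries units to units, $\phi(D) = \{0\} \cup \phi((D[\underline{t}])^{\times}) = \{0\} \cup (C[\underline{s}])^{\times} = C$, as desired. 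One subtlety to verify is that $D[\underline{t}]$ is genuinely a domain with the expected units even when $D$ is a noncommutative division ring; this follows because adjoining central indeterminates to any domain yields a domain and the leading-coefficient argument for degrees still applies coordinatewise.

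The remaining bookkeeping is to reassemble the indecomposable case into the general statement: applying the unit-characterization summand by summand gives $\phi(e_i A) = \phi(e_i) B = C_{j(i)}$, and summing over $i$ yields $\phi(A) = \phi\big(\bigoplus_i e_i A\big) = \bigoplus_i C_{j(i)} = B$. The main obstacle, as noted, is ensuring the intrinsic characterization of $D$ inside $D[\underline{t}]$ survives in the noncommutative setting and that the central-idempotent matching from Lemma \ref{yylem1.5} genuinely forces a summand-by-summand correspondence rather than merely a bijection of idempotents; I would handle the latter by noting that $\phi$ restricts to isomorphisms $e_i A[\underline{t}] \cong \phi(e_i) B[\underline{s}]$ of the cut-down algebras, and since $e_i A[\underline{t}] = (e_i A)[\underline{t}]$ is itself a polynomial extension of the division ring $e_i A = D_i$, the indecomposable case applies directly to each.
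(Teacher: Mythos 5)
Your overall architecture (split off primitive central idempotents via Lemma \ref{yylem1.5}(4), reduce to a single division ring $D$, and pin down $D$ inside $D[\underline{t}]$ by its units) is sound, and the first two steps are fine: $\phi$ does match indecomposable summands, and the units of $D[\underline{t}]$ are indeed $D\setminus\{0\}$. The gap is the final equality in your third step, $\{0\}\cup(C[\underline{s}])^{\times}=C$. For a general algebra $C$ this is false: the inclusion $\supseteq$ requires every nonzero element of $C$ to be invertible in $C[\underline{s}]$, i.e., it requires $C$ to be a division ring (for $C=\Bbbk[x]$ the left-hand side is $\Bbbk$). What your argument actually yields is one inclusion: since $C[\underline{s}]\cong D[\underline{t}]$ is a domain, $C$ is a domain, hence $(C[\underline{s}])^{\times}=C^{\times}$ by the degree argument, so $\phi(D)=\{0\}\cup C^{\times}\subseteq C$. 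The reverse inclusion is exactly the statement that $C$ is a division ring, and nothing in your proof establishes this; it is not a formality --- it is precisely Lemma \ref{yylem1.4}(7) of the paper (if $D[\underline{t}]\cong C[\underline{s}]$ with $D$ a division ring, then $C$ is a division ring), whose proof takes genuine work (e.g., pass to centers to get $Z(D)[\underline{t}]=Z(C)[\underline{s}]$, use a Krull dimension count to see $Z(C)$ is a field, then an evaluation argument to conclude). You flag and dispose of the easy subtlety (the units of $D[\underline{t}]$), but the real one sits on the $C$ side, where you have no control a priori over what $C$ is.

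The repair is cheap: cite Lemma \ref{yylem1.4}(7) for each indecomposable piece, or Lemma \ref{yylem1.4}(9) before decomposing --- the latter is exactly what the paper's own proof does when it asserts at the outset that $B$ is again a finite direct sum of division rings. Once $C$ is known to be a division ring, your unit characterization closes the indecomposable case and your reassembly step is correct. With that citation added, your proof becomes a legitimate variant of the paper's: the paper never reduces to indecomposable summands, but instead observes that $A$ is generated by $(A[\underline{t}])^{\times}$ together with $CI(A[\underline{t}])$, both of which $\phi$ preserves, giving $\phi(A)\subseteq B$ and then equality by symmetry. So the two proofs use the same two invariants (units and central idempotents); yours routes them through the idempotent decomposition, the paper's packages them into a generation statement.
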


\begin{proof}
Suppose that $\phi: A[\underline{t}]\to B[\underline{s}]$
is an algebra isomorphism. Similar to Lemma \ref{yylem1.4}(9), $B$ is a
direct sum of division algebras. Let $A^{\times}$ denote the set of invertible
elements in $A$. In this case, one can show that
$(A[\underline{t}])^{\times}=A^{\times}$ (the same is true for $B$). 
Let $A=D_1\oplus \cdots \oplus D_r$
for some $r\geq 1$, where each $D_i$ is a division ring. For every $a\in A$, write
\begin{center}
$a=(d_1,\dots,d_r)=(d_1,1,\dots,1)(1,0,\dots,0)+\cdots +(1,\dots,1,d_r)(0,\dots,0,1)$,
\end{center}
with $d_i\in D_i$, $1\leq i\leq r$. Observe that $(1,\dots1,d_i,1,\dots,1)$ is
either invertible or idempotent, and $(0,\dots,0,1,0,\dots,0)$ is idempotent,
hence  $A$ is generated by $(A[t])^{\times}$ and $CI(A[t])$. The same
properties hold for $B$. Since $\phi$ maps $(A[\underline{t}])^{\times}$ 
to $(B[\underline{s}])^{\times}$
and $CI(A[\underline{t}])$ to $CI(B[\underline{s}])$, we have $\phi(A)\subseteq B$.
By symmetry, $\phi(A)=B$.
\end{proof}

By \cite{BZ1} the Makar-Limanov invariants and $\LND$-rigidity control cancellation,
actually they control retractable property. The following result is
basically \cite[Theorem 3.3]{BZ1}.

\begin{lemma}
\label{yylem2.4}
Suppose $A$ is an affine domain of finite GK-dimension.
\begin{enumerate}
\item[(1)]
If ${\rm ML}(A[t])=A$ or ${\rm ML}^H(A[t])=A$, then $A$ is retractable.
\item[(2)]
If $A$ is strongly $\LND$-rigid or strong $\LND^H$-rigid, then $A$ is strongly
retractable.
\end{enumerate}
\end{lemma}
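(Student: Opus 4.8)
The plan is to prove both statements by transporting locally nilpotent derivations across the given isomorphism, following the mechanism of \cite{BZ1}. For part (1) fix an algebra $B$ and an isomorphism $\phi\colon A[t]\to B[s]$; I must show $\phi(A)=B$. Observe first that $A[t]$ is a domain (a polynomial ring in a central variable over the domain $A$), hence so is $B[s]\cong A[t]$, and therefore $B$ is a domain. The argument then splits into two halves: the inclusion $\phi(A)\subseteq B$, which is pure rigidity, and the reverse inclusion, which is a leading-term computation.

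For $\phi(A)\subseteq B$, note that $\partial/\partial s$ is a locally nilpotent $k$-derivation of $B[s]$ with kernel $B$, so ${\rm ML}(B[s])\subseteq B$. Since $\delta\mapsto\phi\delta\phi^{-1}$ is a bijection $\LND(A[t])\to\LND(B[s])$ satisfying $\ker(\phi\delta\phi^{-1})=\phi(\ker\delta)$, the Makar-Limanov invariant is carried along by $\phi$, giving ${\rm ML}(B[s])=\phi({\rm ML}(A[t]))=\phi(A)$ under the hypothesis ${\rm ML}(A[t])=A$. Hence $\phi(A)={\rm ML}(B[s])\subseteq B$. In positive characteristic $\partial/\partial s$ may have kernel larger than $B$, so there I replace it by the canonical locally nilpotent higher derivation $\partial_i\colon b s^{m}\mapsto\binom{m}{i}b s^{m-i}$, whose kernel is exactly $B$ (apply $\partial_M$ to the top term), and use ${\rm ML}^{H}(A[t])=A$ instead; the transport argument is identical.

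For the reverse inclusion set $A'=\phi(A)\subseteq B$ and $w=\phi(t)$. As $t$ is central in $A[t]$, its image $w$ is central in $R:=B[s]$, and applying $\phi$ to the free decomposition $A[t]=\bigoplus_{j\ge0}At^{j}$ shows $R=\bigoplus_{j\ge0}A'w^{j}$ is a polynomial ring in the central variable $w$ over $A'$; of course $R=\bigoplus_{j\ge0}Bs^{j}$ as well. Write $w=\sum_{i=0}^{n}b_i s^{i}$ with $b_i\in Z(B)$ and $b_n\ne0$; here $n\ge1$, since $n=0$ would give $w\in B$ and $R=A'[w]\subseteq B\ne R$. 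Because $B$ is a domain, $\deg_s$ is additive and the leading term of $w^{j}$ is $b_n^{\,j}s^{jn}\ne0$; thus in any expansion $s=\sum_{j}\alpha_j w^{j}$ with $\alpha_j\in A'\subseteq B$ the top-degree contributions for distinct $j$ sit in distinct $s$-degrees $jn$ and cannot cancel, forcing $1=\deg_s s=Jn$ at the top index $J$ and hence $n=1$, $w=b_0+b_1 s$ with $b_1\in Z(B)\setminus\{0\}$. The resulting relation $s=\alpha_0+\alpha_1 w$ yields $\alpha_1 b_1=1$, so $b_1$ is a central unit, $s=b_1^{-1}(w-b_0)$, and therefore $R=\bigoplus_{j}Bw^{j}$ too. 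Finally, for any $b\in B$ its expansion $b=\sum_j a_j w^{j}$ with $a_j\in A'\subseteq B$ and its trivial expansion $b=b\,w^{0}$ are two coordinate representations in the free $B$-module $\bigoplus_j Bw^{j}$; by uniqueness $b=a_0\in A'$. Thus $B\subseteq A'$, so $\phi(A)=A'=B$, and $A$ is retractable.

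Part (2) runs along the same lines for an isomorphism $\phi\colon A[\underline t]\to B[\underline s]$: the commuting locally nilpotent (higher) derivations $\partial/\partial s_1,\dots,\partial/\partial s_n$ have common kernel $B$, so strong rigidity gives $\phi(A)={\rm ML}(B[\underline s])\subseteq\bigcap_i\ker(\partial/\partial s_i)=B$, and the reverse inclusion follows by the same leading-term computation handled one central variable $w_i=\phi(t_i)$ at a time, or by induction on $n$. I expect the reverse inclusion---specifically the verification that the powers $w^{j}$ occupy distinct $s$-degrees so that no cancellation occurs---to be the only genuine obstacle; the passage $\phi(A)\subseteq B$ and the whole higher-derivation bookkeeping are formal.
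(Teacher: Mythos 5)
Your part (1) is correct and is essentially the paper's own argument (the paper simply defers to the proof of \cite[Theorem 3.3]{BZ1}): transport ${\rm ML}$ (or ${\rm ML}^H$) across $\phi$ to get $\phi(A)={\rm ML}(B[s])\subseteq\ker(\partial/\partial s)=B$, then the one-variable leading-term computation plus uniqueness of coefficients in the free module $\bigoplus_j Bw^j$ gives $B\subseteq\phi(A)$.

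Part (2), however, has a genuine gap at exactly the step you wave off. The reverse inclusion does \emph{not} follow "by the same leading-term computation handled one central variable at a time," nor by an evident induction on $n$. The engine of your $n=1$ argument is that distinct powers $w^j$ sit in distinct $s$-degrees $jn$, so their top terms cannot cancel. For $n\geq 2$ this fails: distinct monomials in $w_1,\dots,w_n$ can share the same total $\underline{s}$-degree and their leading terms can cancel. For instance, with $w_1=s_1$ and $w_2=s_2+s_1^2$ (both central), the expansion $s_2=w_2-w_1^2$ exhibits exactly such a cancellation, so from $s_l=\sum_{\underline{j}}\alpha_{\underline{j}}\,\underline{w}^{\underline{j}}$ one can conclude nothing about the degrees of the $w_i$. (This is not a counterexample to the lemma---there $\phi(A)=B$---but it destroys the method.) The induction does not run either: strong rigidity gives ${\rm ML}(A[t_1,\dots,t_n])=A$, \emph{not} ${\rm ML}(A[t_1,\dots,t_{n-1}][t_n])=A[t_1,\dots,t_{n-1}]$ (indeed $A[t_1,\dots,t_{n-1}]$ is never retractable, by variable-swapping automorphisms), and the elements $w_1,\dots,w_{n-1}$ need not lie in $B[s_1,\dots,s_{n-1}]$, so the one-variable case cannot be applied to the pair $(A[t_1,\dots,t_{n-1}],\,B[s_1,\dots,s_{n-1}])$.

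The step can be repaired, and the repair is where the standing hypothesis ``affine domain of finite GK-dimension'' actually earns its keep (note your part (1) never used it). From $\phi(A)\subseteq B$ and $B[\underline{s}]=\phi(A)\{\underline{w}\}$ one gets that $B[\underline{s}]$ is generated by $B$ together with the central elements $w_1,\dots,w_n$, so there is a surjective algebra map $\psi\colon B[y_1,\dots,y_n]\to B[\underline{s}]$ fixing $B$ and sending $y_i\mapsto w_i$. Now $B[y_1,\dots,y_n]\cong B[\underline{s}]\cong A[\underline{t}]$ is a domain of finite GK-dimension, and a nonzero ideal of such a domain cannot be factored out without dropping the GK-dimension by at least one (see \cite{KL}); hence $\ker\psi=0$ and the monomials $\underline{w}^{\underline{j}}$ form a left $B$-basis of $B[\underline{s}]$. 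Comparing the two expansions $b=\sum_{\underline{j}}a_{\underline{j}}\underline{w}^{\underline{j}}$ (with $a_{\underline{j}}\in\phi(A)\subseteq B$) and $b=b\,\underline{w}^{\underline{0}}$ of any $b\in B$ then yields $b=a_{\underline{0}}\in\phi(A)$, i.e.\ $\phi(A)=B$. This Hopfian-type use of GK-dimension is the same device the paper employs in Lemma \ref{yylem3.6}, and it also subsumes your $n=1$ computation.
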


\begin{proof}
See the proof of \cite[Theorem 3.3]{BZ1}.
\end{proof}

The above lemma provides a lot of examples that are strongly
retractable.
We refer to papers \cite{BZ1, CPWZ1, CPWZ2, CYZ2, LY} for many
examples that are strongly $\LND$-rigid or strongly $\LND^H$-rigid.

For the rest of this section we want to deal with non-domain
case.

\begin{definition}
\label{yydef2.5}
Let $A$ be an algebra.
\begin{enumerate}
\item[(1)]
We call $A$ {\it $Z$-retractable}
if, for any algebra $B$, any algebra isomorphism $\phi: A[t]
\cong B[s]$ implies that $\phi(Z(A))=Z(B)$.
\item[(2)]
We call $A$ {\it strongly $Z$-retractable} if, for any algebra $B$
and integer $n\geq 1$, any algebra isomorphism
$\phi: A[t_1,\dots, t_n] \cong B[s_1,\dots, s_n]$
implies that $\phi(Z(A))=Z(B)$.
\end{enumerate}
\end{definition}

It is clear that (strongly) retractable algebras are
(strongly) $Z$-retractable. Copying the proof of
\cite[Theorem 3.3]{BZ1}, we also have the following,
which is similar to Lemma \ref{yylem2.4}.

\begin{lemma}
\label{yylem2.6} Let $A$ be an algebra such that the center
$Z(A)$ is an affine domain.
\begin{enumerate}
\item[(1)]
Suppose $ML_Z(A[t])=Z(A)$ or $ML^H_Z(A[t])=Z(A)$. Then
$A$ is $Z$-retractable.
\item[(2)]
Suppose that $A$ is strongly $\LND_Z$-rigid 
{\rm{(}}or strongly $\LND^H_Z$-rigid{\rm{)}}.
Then $A$ is strongly $Z$-retractable.
\end{enumerate}
\end{lemma}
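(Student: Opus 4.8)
The plan is to mirror the proof of Lemma \ref{yylem2.4} (that is, \cite[Theorem 3.3]{BZ1}), but to run the entire argument inside the centers, where everything is commutative and, by hypothesis, a domain. Two structural facts make this reduction work: for any algebra $R$ one has $Z(R[t])=Z(R)[t]$, and any algebra isomorphism carries centers to centers. Hence an isomorphism $\phi\colon A[t]\cong B[s]$ restricts to an isomorphism $\bar\phi\colon Z(A)[t]\cong Z(B)[s]$; in particular $Z(B)[s]\cong Z(A)[t]$ is an affine domain, so $Z(B)$ is again an affine domain, which is what lets me treat $A$ and $B$ symmetrically in the commutative setting.

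The first real step is the inclusion $\phi(Z(A))\subseteq Z(B)$. The canonical locally nilpotent (higher) derivation of $A[t]$ attached to the variable $t$ has kernel $A$, so intersecting with $Z(A)[t]$ gives $ML_Z(A[t])\subseteq A\cap Z(A)[t]=Z(A)$ for free; thus the hypothesis $ML_Z(A[t])=Z(A)$ is \emph{equivalent} to $Z(A)\subseteq\ker\delta$ for every $\delta\in\LND(A[t])$ (respectively every $\delta\in\LND^H(A[t])$). I then transport the canonical locally nilpotent (higher) derivation of $B[s]$ attached to $s$, whose kernel is $B$, back along $\phi$: the conjugate $\delta:=\phi^{-1}\partial_s\phi$ lies in $\LND(A[t])$ (resp. $\LND^H(A[t])$) and has kernel $\phi^{-1}(B)$. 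The hypothesis forces $Z(A)\subseteq\phi^{-1}(B)$, i.e. $\phi(Z(A))\subseteq B$; since $\phi(Z(A))$ is central in $B[s]$ it also lies in $Z(B)[s]$, and $B\cap Z(B)[s]=Z(B)$, so $\phi(Z(A))\subseteq Z(B)$.

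The main work, and the step I expect to be the obstacle, is upgrading this inclusion to an equality. Writing $C=\phi(Z(A))$ and $u=\phi(t)$, surjectivity of $\bar\phi$ gives $Z(B)[s]=C[u]$ with $C\subseteq Z(B)$, and I would prove the purely commutative statement: if $C\subseteq D$ are affine domains and $D[s_1,\dots,s_n]=C[u_1,\dots,u_n]$ with the \emph{same} number $n$ of new generators, then $C=D$. The route I have in mind is as follows. A transcendence-degree count forces $\operatorname{trdeg}C=\operatorname{trdeg}D$ and makes the $u_i$ algebraically independent over $\Frac(C)$. Localizing $D[\underline s]=C[\underline u]$ at $C\setminus\{0\}$ turns it into $D'[\underline s]=\Frac(C)[\underline u]$, where $D'=(C\setminus\{0\})^{-1}D\supseteq\Frac(C)$; comparing transcendence degrees over $\Frac(C)$ gives $\operatorname{trdeg}_{\Frac(C)}D'=0$, and since $D'\subseteq\Frac(C)(\underline u)$ and $\Frac(C)$ is algebraically closed in that rational function field, $D'=\Frac(C)$. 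Hence $D\subseteq\Frac(C)$, and finally $D\subseteq\Frac(C)\cap C[\underline u]=C$ because the $u_i$ are algebraically independent over $C$. Applied with $D=Z(B)$ this yields $\phi(Z(A))=Z(B)$, proving (1). It is precisely the domain hypothesis on $Z(A)$ (hence on $Z(B)$) that makes this commutative lemma available; without it the leading-coefficient bookkeeping underlying the degree and localization arguments collapses.

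Part (2) then runs verbatim with $A[t_1,\dots,t_n]\cong B[s_1,\dots,s_n]$: strong $\LND_Z$-rigidity (resp. strong $\LND^H_Z$-rigidity) gives $Z(A)\subseteq\ker\delta$ for every $\delta\in\LND(A[t_1,\dots,t_n])$ (resp. $\LND^H$), and transporting each canonical derivation attached to $s_i$ yields $\phi(Z(A))\subseteq\bigcap_i\ker\partial_{s_i}=B$, hence $\phi(Z(A))\subseteq Z(B)$; the equality step is the same commutative lemma, now with the $n$ generators $u_i=\phi(t_i)$. The higher-derivation formulation [Definitions \ref{yydef1.7}, \ref{yydef1.8}] is what keeps the kernel of the shift equal to $B$ in positive characteristic, so the whole argument is uniform in $\operatorname{char}\Bbbk$.
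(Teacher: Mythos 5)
Your proposal is correct, and its first half is exactly the argument the paper intends (it is the opening of the proof of \cite[Theorem 3.3]{BZ1}, run inside centers): transport the canonical locally nilpotent (higher) derivation attached to each variable $s_i$ through $\phi$, note that the inclusion $ML_Z(A[\underline{t}])\subseteq Z(A)$ is automatic, and use the rigidity hypothesis to get $\phi(Z(A))\subseteq B\cap Z(B[\underline{s}])=Z(B)$. Where you genuinely depart from the paper is the upgrade from this inclusion to equality. The paper gives no details beyond ``copying the proof of \cite[Theorem 3.3]{BZ1}''; that proof must work for \emph{noncommutative} affine domains (this is what also yields Lemma \ref{yylem2.4}), so it cannot invoke localization at a central multiplicative set, transcendence degree, or algebraic closedness of $\Frac(C)$ in $\Frac(C)(\underline{u})$ --- instead it works directly with the two polynomial presentations $B[\underline{s}]=\phi(A)[\underline{u}]$ via GK-dimension and degree/leading-term considerations, which are the tools that survive noncommutativity of the coefficient ring. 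You replace that step with a purely commutative lemma: if $C\subseteq D$ are domains with $C$ affine and $D[\underline{s}]=C[\underline{u}]$, then localizing at $C\setminus\{0\}$ and counting transcendence degrees gives $(C\setminus\{0\})^{-1}D=\Frac(C)$, whence $D\subseteq \Frac(C)\cap C[\underline{u}]=C$. This is complete and correct (each sub-claim --- algebraic independence of the $u_i$ over $\Frac(C)$, algebraic closedness of $\Frac(C)$ in $\Frac(C)(\underline{u})$, and $\Frac(C)\cap C[\underline{u}]=C$ by linear independence of the monomials $u^{\alpha}$ --- is standard), and it is arguably more transparent in the $Z$-retractable setting, precisely because the hypothesis hands you a commutative affine domain; the trade-off is that your route is special to centers, while the paper's route via \cite{BZ1} is uniform across Lemmas \ref{yylem2.4} and \ref{yylem2.6}. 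One point you handle correctly and should keep explicit: in positive characteristic only the higher-derivation alternative drives the first step, since $\ker(d/ds)=B[s^p]\neq B$ for ordinary derivations --- the same convention the paper adopts when it says $\LND$ should be replaced by $\LND^H$ in characteristic $p$.
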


By \cite[Section 5]{BZ1}, effectiveness of the 
discriminant controls $\LND^H$-rigidity. Since we 
will not compute explicitly discriminants
in this paper, to save space, we refer to \cite{CPWZ1, CPWZ2, BZ1}
for the definition of the discriminant. But we will use the idea of
``effectiveness'' in later sections, we now recall its definition next.
An algebra is called {\it PI} if it satisfies a polynomial identity.

\begin{definition} \cite[Definition 5.1]{BZ1}
\label{yydef2.7}
Let $A$ be a domain and suppose that $Y=\bigoplus_{i=1}^n kx_i$
generates $A$ as an algebra.
An element $f\in A$ is called \emph{effective} if the following
conditions hold.
\begin{enumerate}
\item[(1)]
There is an ${\mathbb N}$-filtration $\{F_i A\}_{\geq i}$ on $A$
such that the associated graded ring $\gr A$ is a domain
(one possible filtration is the trivial filtration $F_0 A=A$).
With this filtration we define the degree of elements in $A$, denoted
by $\deg_A$.
\item[(2)]
For every testing ${\mathbb N}$-filtered PI algebra $T$ with
$\gr T$ being an ${\mathbb N}$-graded domain and for every
testing subset $\{y_1,\ldots,y_{n}\}\subset T$ satisfying
\begin{enumerate}
\item[(a)]
it is linearly independent in the quotient $k$-module $T/k1_T$, and
\item[(b)]
$\deg y_i\geq \deg x_i$ for all $i$ and $\deg y_{i_0}>\deg x_{i_0}$
for some $i_0$,
\end{enumerate}
there is a presentation of $f$ of the form
$f(x_1,\ldots,x_{n})$ in the free algebra $k\langle x_1,\ldots,x_{n}
\rangle$, such that either $f(y_1,\ldots,y_n)$ is zero or
$\deg_{T} f(y_1,\ldots,y_n)>\deg_A f$.
\end{enumerate}
\end{definition}

The following is a special case of \cite[Lemma 5.3(6)]{BZ1}.

\begin{example} \cite[Lemma 5.3(6)]{BZ1}
\label{yyex2.8}
Let $A=\Bbbk[x]$. Every non-unit element $f\in A$ is effective in $A$.
To prove this we write $f=\sum_{i=0}^n a_i x^i$ where $a_i\in \Bbbk$ and
$a_n\neq 0$. Without loss of generality we might assume that $a_n=1$.
Consider $A$ as a filtered algebra generated by $Y=\Bbbk x$ by
defining $\deg x=1$. Clearly $\gr A\cong A$ is a domain. Let $T$ be
any testing filtered algebra and let $\{y\}$ be a testing  subset of
$T$. If $\deg y>1=\deg x$, then $\deg f(y)= n\deg y> n=\deg f$.
Therefore $f$ is effective.
\end{example}

More complicated examples of effective elements are given in
\cite[Section 5]{BZ1}.
There is another concept, called ``dominating'', see 
\cite[Definition 4.5]{BZ1} or \cite[Definition 2.1(2)]{CPWZ1}, 
that is similar to effective. Both of these properties control 
$\LND_Z^H$-rigidity. The following result is similar to 
\cite[Theorem 5.2]{BZ1}.

\begin{theorem}
\label{yythm2.9} Let $A$ be a PI prime algebra such that the discriminant
{\rm{(}}or $w$-discriminant for some $w${\rm{)}} over its center $Z$ is
effective {\rm{(}}respectively, dominating{\rm{)}} in $Z$. Suppose $Z$ is
an affine domain, then $A$ is strongly $\LND^H_Z$-rigid.
\end{theorem}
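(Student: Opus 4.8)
The plan is to mimic the proof of \cite[Theorem 5.2]{BZ1}, adapting the discriminant machinery from the domain case to the present setting where $A$ need only be PI prime with affine domain center $Z$. The key point is that the discriminant lives in $Z$, and effectiveness is a property purely about $Z$ as a filtered domain, so the strategy is to transfer any locally nilpotent higher derivation on a polynomial extension $A[t_1,\dots,t_n]$ to one acting on the center, and then show that effectiveness of the discriminant forces it to be trivial on $Z$. First I would recall (from \cite[Section 5]{BZ1}) that for a PI prime algebra the (appropriately normalized) discriminant $d:=\hdet$ over $Z$ is an element of $Z$ that is preserved, up to a unit, by every algebra automorphism; more precisely, any automorphism $\sigma$ of $A$ satisfies $\sigma(d)=u\, d$ for some unit $u$, and the analogous statement holds after adjoining central variables since the discriminant behaves well under the base change $Z \rightsquigarrow Z[t_1,\dots,t_n]$.

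The central mechanism is that any $\partial=\{\partial_i\}\in\LND^H_Z(A[\underline{t}])$ gives, via the automorphism $G_{\partial,t}$ of $A[\underline{t}][t]$, a one-parameter family of automorphisms that must fix the discriminant up to a unit. Since the base ring is $Z$ and the relevant discriminant sits in the affine domain $Z$ (or its polynomial extension, which is again an affine domain), I would invoke the discriminant-control principle of \cite[Section 5]{BZ1}: if $f\in Z$ is effective (respectively dominating) and is fixed up to scalars by the higher derivation, then the higher derivation must act trivially, i.e.\ $\partial_i|_Z=0$ for all $i\geq 1$. The degree inequality built into Definition \ref{yydef2.7}(2) is exactly what rules out any nontrivial action: a nontrivial $G_{\partial,t}$ would change the filtration degree of the generators, and effectiveness then forces $\deg_T f(y_1,\dots,y_n)>\deg_Z f$, contradicting that the discriminant (and hence $f$) is preserved up to a unit and therefore keeps its degree. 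This yields $ML^H_Z(A[\underline{t}])=Z(A)$ for every $n$, which is precisely strong $\LND^H_Z$-rigidity.

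Concretely, the steps in order are: (i) verify that $Z(A[\underline{t}])=Z[\underline{t}]$ is again an affine domain, so the filtration and degree function from effectiveness extend; (ii) recall that the discriminant of $A[\underline{t}]$ over $Z[\underline{t}]$ equals (up to a unit) the discriminant of $A$ over $Z$, so effectiveness is inherited by the extension; (iii) take an arbitrary $\partial\in\LND^H(A[\underline{t}])$, restrict the associated automorphism $G_{\partial,t}$ to the center, and use that central automorphisms preserve the discriminant up to a unit; (iv) apply the effectiveness criterion to conclude the restriction of $\partial$ to $Z[\underline{t}]$ is trivial, giving $ML^H_Z(A[\underline{t}])\supseteq Z$, and combine with the trivial reverse inclusion $ML^H_Z(A[\underline{t}])\subseteq Z(A[\underline{t}])\cap A= Z$.

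The main obstacle I anticipate is step (iii): in the non-domain prime setting one must be careful that the discriminant is still the right invariant and that the restriction of a locally nilpotent higher derivation of $A[\underline{t}]$ actually descends to a higher derivation of the center (it does, since derivations preserve the center, but verifying the locally-nilpotent and automorphism conditions descend requires the filtration on $Z[\underline{t}]$ to be compatible with $G_{\partial,t}$). A second delicate point is ensuring the discriminant is genuinely a \emph{nonzero} non-unit element of $Z$ to which Definition \ref{yydef2.7} applies; this is where primeness and the PI hypothesis are used, via the trace/reduced-trace form that defines $\hdet$. Once these compatibilities are in place, the degree argument from \cite[Theorem 5.2]{BZ1} goes through essentially verbatim, with $Z$ playing the role that the whole algebra played in the domain case.
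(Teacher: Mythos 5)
Your proposal is correct and follows essentially the same route as the paper's own proof (which, like yours, explicitly adapts \cite[Theorem 5.2]{BZ1}): base-change invariance of the discriminant under $Z\rightsquigarrow Z[\underline{t}]$, restriction of the automorphism $G_{\partial,t}$ to the center $Z[\underline{t}][t]$, which is then used as the test algebra, preservation of the discriminant up to units under automorphisms, and the effectiveness degree inequality forcing $\partial_i(x_j)=0$ on the generators $x_j$ of the affine domain $Z$, combined with the easy reverse inclusion $ML^H_Z(A[\underline{t}])\subseteq Z$. The compatibility concern you raise in step (iii) is handled in the paper exactly as you anticipate: one puts a filtration on the test algebra $T=Z[\underline{t}][t]$ with $\deg t=\alpha$ chosen larger than all $\deg\partial_i(x_j)$, so that $y_j:=G_{\partial,t}(x_j)$ satisfies $\deg y_j\geq\deg x_j$ with equality only when $y_j=x_j$.
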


\begin{proof}
Since the proofs for the ``effective'' case and the ``dominating''
case are very similar, we only prove the ``effective'' case. We
also copy the proof of \cite[Theorem 5.2]{BZ1}.

Let $f$ be the discriminant of $A$ over $Z$, $d(A/C_A)$
(or $w$-discriminant $d_w(A/C_A)$).
Suppose $Z$ is generated by $\{x_1,\ldots,x_n\}$ as in
Definition \ref{yydef2.7} (as $f$ is effective).

Let $R=k[t_1,\ldots,t_d][t]$. By a prime algebra version of
\cite[Lemma 4.6(2)]{BZ1},
$$d_w(A\otimes  R/C_A\otimes R)=d_w(A/C_A)=f,$$
which is effective by hypothesis.
Let $\partial\in \LND^H(A[t_1,\cdots,t_d])$. By definition,
$G:=G_{\partial, t}\in \Aut_{k[t]}(A[t_1,\cdots,t_d][t])$.
For each $j$,
$$G(x_j)=x_j+\sum_{i\geq 1} t^i \partial_i(x_j).$$
Then $G$ is also an automorphism of $Z[t_1,\cdots,t_d][t]$.
We take the test algebra $T$ to be $Z[t_1,\cdots,t_d][t]$ where
the filtration on $T$ is induced by the filtration on $Z$ together
with $\deg t_s=1$ for all $s=1,\ldots,d$ and $\deg t=\alpha$
where $\alpha$ is larger than
$\deg \partial_i(x_j)$ for all $j=1,\ldots,n$ and all $i\geq 1$.
Now set $y_j=G(x_j)\in T$. By the choice of $\alpha$,
we have that
\begin{enumerate}
\item[(a)]
$\deg y_j\geq \deg x_j$, and that
\item[(b)]
$\deg y_j=\deg x_j$ if and only if $y_j=x_j$.
\end{enumerate}
If $G(x_j)\neq x_j$ for some $j$,
by effectiveness  as in Definition \ref{yydef2.7},
$\deg f(y_1,\cdots,y_n)>\deg f$. So $f(y_1,\cdots,y_n)
\neq_{A^\times} f$. But $f(y_1,\cdots,y_n)=G(f)=_{Z^\times} f$
by \cite[Lemma 4.4(4)]{BZ1}, a contradiction. Therefore
$G(x_j)=x_j$ for all $j$. As a consequence,
$\partial_i(x_j)=0$ for all $i\geq 1$, or equivalently, 
$x_j\in \ker \partial$.
Since $Z$ is generated by $x_j$'s, $Z\subset
\ker\partial$. Thus $Z\subseteq ML^H_Z(A[t_1,\cdots,t_d])$.
It is clear that $Z\supseteq ML^H_Z(A[t_1,\cdots,t_d])$,
see \cite[Example 2.4]{BZ1}. Therefore
$Z=ML^H_Z(A[t_1,\cdots,t_d])$ as required.
\end{proof}

\begin{corollary}
\label{yycor2.10}
If $A$ is a prime PI algebra over a field $\Bbbk$ such that
the center of $A$ is $\Bbbk[x]$. If the discriminant
of $A$ over $Z(A)$ is a nonunit in $Z(A)$, then $A$ is
$\LND^H_Z$-rigid.
\end{corollary}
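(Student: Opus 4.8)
The plan is to read this off almost immediately from Theorem \ref{yythm2.9}, since Corollary \ref{yycor2.10} is precisely the specialization of that theorem to the case where the center is a one-variable polynomial ring. The standing hypotheses of Theorem \ref{yythm2.9} are that $A$ is PI prime with center $Z=Z(A)$ an affine domain, and that the discriminant of $A$ over $Z$ is effective in $Z$. Here $A$ is assumed PI prime, and $Z(A)=\Bbbk[x]$ is certainly an affine domain over $\Bbbk$, so the only thing that needs checking is that the ``nonunit'' hypothesis on the discriminant can be upgraded to ``effective.''

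The key step is to invoke Example \ref{yyex2.8}: over the one-variable polynomial ring $\Bbbk[x]$, \emph{every} non-unit element is effective in $\Bbbk[x]$. By hypothesis the discriminant $f$ of $A$ over $Z(A)$ is a nonunit of $Z(A)=\Bbbk[x]$, and therefore Example \ref{yyex2.8} shows that $f$ is effective in $Z$. This is the whole point of restricting to the case $Z=\Bbbk[x]$: effectiveness, which is the genuinely nontrivial condition in Theorem \ref{yythm2.9}, becomes automatic for any nonunit in a one-variable polynomial center.

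With effectiveness in hand, I would apply Theorem \ref{yythm2.9} verbatim to conclude that $A$ is strongly $\LND^H_Z$-rigid. Finally, specializing the definition of strong $\LND^H_Z$-rigidity (Definition \ref{yydef1.8}(5)) to $n=1$ gives $ML^H_Z(A[t])=Z(A)$, which is exactly $\LND^H_Z$-rigidity as intended in the statement. There is no real obstacle in this argument; all the substantive work is already carried out in Theorem \ref{yythm2.9} and Example \ref{yyex2.8}, and the corollary is just the observation that a one-variable polynomial center renders the effectiveness requirement free of charge.
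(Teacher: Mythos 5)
Your proposal is correct and is exactly the paper's proof: the paper also derives the corollary by combining Example \ref{yyex2.8} (every nonunit of $\Bbbk[x]$ is effective) with Theorem \ref{yythm2.9}, obtaining strong $\LND^H_Z$-rigidity, which in particular gives $\LND^H_Z$-rigidity.
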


\begin{proof} This follows from Example \ref{yyex2.8} and
Theorem \ref{yythm2.9}.
\end{proof}

\section{Detectability and cancellation}
\label{yysec3}

If $B$ is a subring of $C$ and $f_1,\dots,f_m$ are elements of $C$, then the
subring generated by $B$ and the set $\{f_1,\dots,f_m\}$ is denoted by $B\{f_1,\dots,f_m\}$.

\begin{definition}
\label{yydef3.1} Let $A$ be an algebra.
\begin{enumerate}
\item[(1)]
We call $A$ {\it detectable}
if, for any algebra $B$, an algebra isomorphism $\phi: A[t] \cong B[s]$ implies that
$B[s]=B\{\phi(t)\}$, or equivalently, $s\in B\{\phi(t)\}$.
\item[(2)]
We call $A$ {\it strongly detectable} if, for any algebra $B$ and any 
integer $n\geq 1$, an algebra isomorphism
$$\phi: A[t_1,\dots, t_n] \cong B[s_1,\dots, s_n]$$
implies that $B[s_1,\dots,
s_n]=B\{\phi(t_1),\dots,\phi(t_n)\}$, or equivalently, for each 
$i=1,\cdots, n$, $s_i\in B\{\phi(t_1),\dots,\phi(t_n)\}$.
\end{enumerate}
\end{definition}

In the above definition, we do not assume that $\phi(t)=s$.
Every strongly detectable algebra is detectable.
The polynomial ring $\Bbbk[x]$ is cancellative, but not
detectable.

\begin{lemma}
\label{yylem3.2} If $A$ is $Z$-retractable {\rm{(}}respectively, strongly
$Z$-rectractable{\rm{)}}, then it is detectable {\rm{(}}respectively, strongly
detectable{\rm{)}}.
\end{lemma}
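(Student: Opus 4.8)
The plan is to exploit the single clean fact that the center of a polynomial extension is itself a polynomial extension of the center, namely $Z(A[t]) = Z(A)[t]$, and then let $Z$-retractability pin down where $\phi$ sends this center. The whole argument is short, so rather than a long computation it is really a matter of threading together three observations in the right order.

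First I would record the bookkeeping identity $Z(A[t]) = Z(A)[t]$. An element $\sum_i a_i t^i \in A[t]$ commutes with all of $A[t]$ if and only if it commutes with every $b \in A$ (since $t$ is already central), and because $A[t]$ is free over $A$ on the powers of $t$, this forces each coefficient $a_i$ to lie in $Z(A)$. The same holds for $B$, so $Z(B[s]) = Z(B)[s]$. Since any algebra isomorphism carries the center onto the center, $\phi$ restricts to an isomorphism $\phi(Z(A)[t]) = Z(B)[s]$.

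Next I would unwind the left-hand side as a generated subring. The algebra $Z(A)[t]$ is generated by $Z(A)$ together with $t$, so $\phi(Z(A)[t])$ is the subring generated by $\phi(Z(A))$ and $\phi(t)$. Here I invoke $Z$-retractability, whose defining content is exactly $\phi(Z(A)) = Z(B)$; this gives $\phi(Z(A)[t]) = Z(B)\{\phi(t)\}$. Comparing with the previous paragraph yields $Z(B)\{\phi(t)\} = Z(B)[s]$, and in particular $s \in Z(B)\{\phi(t)\}$. Since $Z(B) \subseteq B$ we have $Z(B)\{\phi(t)\} \subseteq B\{\phi(t)\}$, so $s \in B\{\phi(t)\}$, which is precisely detectability. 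For the strong version I would run the identical argument with $n$ commuting variables, using $Z(A[t_1,\dots,t_n]) = Z(A)[t_1,\dots,t_n]$ and strong $Z$-retractability to obtain $Z(B)\{\phi(t_1),\dots,\phi(t_n)\} = Z(B)[s_1,\dots,s_n]$, whence each $s_i \in B\{\phi(t_1),\dots,\phi(t_n)\}$.

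The only points demanding any care — and the closest thing to an obstacle — are the identity $Z(A[t]) = Z(A)[t]$ and the elementary remark that the image under $\phi$ of a subring generated by a set is the subring generated by the images; notably, the argument never needs $\phi(t)$ to be central, only the definition of a generated subring. Once these are in hand the conclusion is immediate, and it is worth emphasizing that no hypothesis stronger than $Z$-retractability (in particular, not full retractability) is used.
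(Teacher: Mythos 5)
Your proof is correct and follows essentially the same route as the paper's: both arguments rest on the identity $Z(A[\underline{t}])=Z(A)[\underline{t}]$, the fact that $\phi$ carries center onto center, and $Z$-retractability to conclude $Z(B)\{\phi(t_1),\dots,\phi(t_n)\}=Z(B)[\underline{s}]$, whence each $s_i\in B\{\phi(t_1),\dots,\phi(t_n)\}$. The paper merely packages these observations as a single chain of equalities, so there is nothing substantive to change.
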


\begin{proof} We only show the ``strongly'' version. The proof of the non-``strongly''
version is similar.

Suppose that $A$ is strongly $Z$-retractable. Let $B$ be any algebra 
such that $\phi: A[\underline{t}]\to B[\underline{s}]$ is an 
isomorphism. Since $A$ is strongly $Z$-retractable, $\phi$ restricts 
to an isomorphism $\phi\mid_{Z(A)}: Z(A)\to Z(B)$. Write
$f_i:=\phi(t_i)$ for $i=1,\dots,n$. Then
$$\begin{aligned}
Z(B)\{f_1,\cdots,f_n\}&=\phi(Z(A))\{\phi(t_1),\dots, \phi(t_n)\}\\
&=\phi(Z(A)\{t_1,\dots,t_n\})=\phi(Z(A)[\underline{t}])\\
&=\phi (Z(A[\underline{t}]))=Z(B[\underline{s}])\\
&= Z(B)[\underline{s}].
\end{aligned}
$$
Then, for every $i$, $s_i\in Z(B)[\underline{s}]= Z(B)\{f_1,\cdots,f_n\} \subseteq
B\{f_1,\dots,f_n\}$ as desired.
\end{proof}

Retractability is stronger than detectability. The next example shows
that these two properties are not equivalent.

\begin{example}
\label{yyex3.3} Let $A=\Bbbk[x,y]/(x^2=y^2=xy=0)$. By Theorem \ref{yythm4.1} in the
next section, $A$ is strongly detectable. But $A$ is not retractable as we show next.
Define an isomorphism $\phi: A[t]\to A[s]$ where $\phi(x)=x, \phi(t)=s$ and
$\phi(y)=y+sx$. Clearly $\phi(A)\neq A$, so $A$ is not retractable.
\end{example}

Next we show that detectability implies cancellation under some mild
conditions. We first recall a definition. 

\begin{definition}
\label{yydef3.4}
Let $A$ be a $k$-algebra.
\begin{enumerate}
\item[(1)]
We say $A$ is {\it Hopfian} if every $k$-algebra 
epimorphism from $A$ to itself
is an automorphism.
\item[(2)]
We say $A$ is {\it strongly Hopfian} if $A[t_1,\cdots, t_n]$ is Hopfian for every
$n\geq 0$.
\end{enumerate}
\end{definition}

In this definition, Hopfian property is dependent on the base
ring $k$. Hopfian algebras have been studied by 
several authors \cite{BRY, Le, Mal, OR}.
Every left noetherian algebra is strongly Hopfian, see the next lemma.
Some non-noetherian examples can be constructed using Lemma \ref{yylem3.5}(2,3).
An ${\mathbb N}$-graded $k$-algebra $A=\bigoplus_{i=0}^{\infty} A_i$ is
called {\it locally finite} if each homogeneous component $A_i$ is a
finitely generated $k$-module.

\begin{lemma}
\label{yylem3.5}
The following algebras are strongly Hopfian.
\begin{enumerate}
\item[(1)]
Left or right noetherian algebras.
\item[(2)]
Finitely generated locally finite ${\mathbb N}$-graded $k$-algebras
that are $k$-flat.
\item[(3)]
Prime affine $\Bbbk$-algebras satisfying a polynomial identity.
\end{enumerate}
\end{lemma}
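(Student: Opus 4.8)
The plan is to reduce all three parts to the case $n=0$, i.e. to the statement that $A$ itself is Hopfian, by checking that each class is stable under forming a central polynomial extension $A[\underline{t}]=A[t_1,\dots,t_n]$. For (1), $A[\underline{t}]$ is left (right) noetherian by the Hilbert basis theorem; for (3), $A[\underline{t}]$ is again affine, prime (a central polynomial ring over a prime ring is prime) and PI; for (2), grading each $t_j$ in degree $1$ exhibits $A[\underline{t}]=A\otimes_k k[\underline{t}]$ as a finitely generated, locally finite $\mathbb{N}$-graded $k$-algebra that is still $k$-flat (a direct sum of copies of the flat module $A$). Granting these closure properties, it suffices to treat $n=0$, and I would use two independent engines: an ascending-chain argument on kernels for the noetherian case, and a combination of residual finite-dimensionality with the Ax--Grothendieck theorem for the graded and PI cases.

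For (1), a surjective $k$-algebra endomorphism $f\colon A\to A$ produces an ascending chain of two-sided ideals $\ker f\subseteq\ker f^2\subseteq\cdots$. A left (or right) noetherian ring satisfies the ACC on two-sided ideals, so $\ker f^{n}=\ker f^{n+1}$ for some $n$. Given $x\in\ker f$, surjectivity of $f^{n}$ gives $x=f^{n}(y)$; then $f^{n+1}(y)=f(x)=0$ forces $y\in\ker f^{n+1}=\ker f^{n}$, whence $x=f^{n}(y)=0$. Thus $f$ is injective and $A$ is Hopfian.

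The heart of the matter is (2), where $A$ is genuinely non-noetherian (for instance a free algebra), so the chain argument is unavailable. First I would reduce to a field: with $\bar K=\overline{\Frac(k)}$, flatness of $A$ over the domain $k$ gives an embedding $A\hookrightarrow A\otimes_k\bar K$, and $f$ is injective once $f\otimes\bar K$ is. Here $A\otimes_k\bar K$ is a finitely generated, locally finite $\mathbb{N}$-graded $\bar K$-algebra, and over a field such an algebra is \emph{residually finite-dimensional}, since the graded ideals $(A\otimes_k\bar K)_{\ge m}$ have finite codimension and intersect in $0$. For any surjective endomorphism $g$ of a finitely generated residually finite-dimensional $\bar K$-algebra $R$ I would then argue as follows: for each $d$ the set $X_d=\Hom_{\bar K\text{-alg}}(R,M_d(\bar K))$ is an affine variety, and $g^{*}\colon X_d\to X_d$, $\rho\mapsto\rho\circ g$, is a morphism that is injective because $g$ is surjective; by the Ax--Grothendieck theorem $g^{*}$ is therefore surjective. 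If $0\ne a\in\ker g$, pick a finite-dimensional representation $\rho$ with $\rho(a)\ne 0$ and write $\rho=\sigma\circ g$; then $\rho(a)=\sigma(g(a))=0$, a contradiction, so $g$ is injective. Applying this to $g=f\otimes\bar K$ finishes (2). I expect this base case to be the main obstacle: the passage from ``surjective'' to ``injective'' for non-noetherian graded algebras, which is exactly why one replaces an ACC by the representation-variety reformulation and Ax--Grothendieck.

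For (3) I would feed the same machine. By Braun's theorem the Jacobson radical of an affine PI $\Bbbk$-algebra is nilpotent; since $A$ is prime this forces $J(A)=0$, and by the PI-Nullstellensatz every primitive quotient of $A$ is finite-dimensional, so $A$ is residually finite-dimensional over $\Bbbk$. As finite codimension and vanishing intersections are preserved by the flat base change $\Bbbk\to\bar\Bbbk$, the algebra $A\otimes_\Bbbk\bar\Bbbk$ is residually finite-dimensional over $\bar\Bbbk$, and the Ax--Grothendieck argument above (applied to $f\otimes\bar\Bbbk$) shows $A$ is Hopfian. Alternatively one may invoke the structure theory of prime affine PI algebras: $A$ is a finite module over an affine central subalgebra $C$, hence a noetherian $C$-module, so it satisfies the ACC on two-sided ideals and the argument of (1) applies verbatim. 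Either route, together with the stability of the class under $A\mapsto A[\underline{t}]$, yields that $A$ is strongly Hopfian.
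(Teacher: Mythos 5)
Your overall strategy---reducing to $n=0$ by checking each class is closed under central polynomial extensions, then proving the Hopfian property---is the same as the paper's, and your part (1) is the standard kernel-chain argument that the paper leaves implicit. Where you genuinely diverge is in the base cases of (2) and (3). The paper proves (2) by descending from $\Frac(k)$ to a finitely generated arithmetic subring $R\subset\Bbbk$ (of the form $\mathbb{Z}\{f_1,\dots,f_w\}$ or $\mathbb{F}_p\{f_1,\dots,f_w\}$) over which the surjection restricts to an $R$-form $A_R$, then passes to the finite residue fields of $R$ and invokes Lewin's theorem that finitely generated residually finite rings are Hopfian; for (3) it simply cites Belov-Kanel--Rowen--Yu. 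You instead extract residual finite-dimensionality (from local finiteness of the grading in (2); from Braun's theorem plus the PI-Nullstellensatz in (3)), base-change to an algebraically closed field, and run the representation-variety argument: $X_d=\Hom_{\bar K\text{-alg}}(R,M_d(\bar K))$ is a Zariski-closed subset of $M_d(\bar K)^n$ (closed even though $R$ need not be finitely presented), $g^{*}$ is a morphism because each $g(x_i)$ is a noncommutative polynomial in the generators, surjectivity of $g$ makes $g^{*}$ injective, and Ax--Grothendieck makes $g^{*}$ surjective, contradicting $0\neq a\in\ker g$ via a finite-dimensional representation separating $a$ from $0$. I checked the two side conditions you need: $A\hookrightarrow A\otimes_k\bar K$ by $k$-flatness, and infinite intersections of subspaces commute with field extension (expand along a $\Bbbk$-basis of $\bar\Bbbk$), so residual finite-dimensionality does survive base change. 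Your route is correct; it handles (2) and (3) uniformly with one geometric lemma, gives a self-contained proof of (3) instead of a citation, and avoids the paper's somewhat delicate step of arranging that $\phi$ restricts to a surjective endomorphism of the arithmetic form $A_R$. (At bottom the two methods are cousins, since Ax--Grothendieck is itself proved by reduction to finite fields.)

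The one genuine error is your ``alternative'' route (b) for part (3): it is false that every prime affine PI $\Bbbk$-algebra is a finite module over an affine central subalgebra. If it were, the algebra would be noetherian and (since its center would then be a finitely generated module over that affine subalgebra) would have an affine center, but, for example,
$$R=\begin{pmatrix}\Bbbk+xS & xS\\ S & S\end{pmatrix},\qquad S=\Bbbk[x,y],$$
is an affine prime PI ring that is not left noetherian (the left ideals generated by $\{xy^{i}e_{12}\}_{i\le n}$ ascend strictly) and whose center, $(\Bbbk+xS)\cdot 1$, is not even finitely generated as an algebra. Finiteness over an affine central subalgebra holds only under extra hypotheses: Small--Warfield give it in GK-dimension one, and Schelter's theorem applies to the trace ring $T(A)\supseteq A$, not to $A$ itself. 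So route (b) must be discarded; your part (3) stands on route (a) alone, and with that deletion the proposal is a complete and valid proof.
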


\begin{proof} (1) Let $A$ be left noetherian,
then $A$ is Hopfian. Since $A[t_1,\cdots,t_n]$
is also left noetherian, $A$ is strongly Hopfian.

(2) 
If $A$ is an affine locally finite ${\mathbb N}$-graded algebra,
so is $A[t]$. Thus it suffices to show that $A$ is Hopfian.
Replacing $A$ by its localization $A\otimes_k \Bbbk$
where $\Bbbk$ is the fraction field of $k$, we might assume that
$k$ is a field $\Bbbk$. Note that the $k$-flatness insures that $A\otimes_k \Bbbk$
being $\Bbbk$-Hopfian implies that $A$ is $k$-Hopfian.

Next we assume that $k=\Bbbk$ which is a field.
In this case the assertion is a special case of 
\cite[Theorem 7, p.77]{BLK}, which is a consequence of a 
result of Mal'tsev (or Malcev) \cite{Mal}. We include a 
detailed proof here for the convenience of the reader.

We prove that $A$ is $\Bbbk$-Hopfian by contradiction.
Suppose that $\phi: A\to A$ is a surjective endomorphism of $A$ that has a
nonzero kernel. Fix $0\neq r\in \ker \phi$. Let $r\in \oplus_{i=0}^s A$,
for some integer $s\geq 0$,
and write $r=\sum_{i=0}^s r_i$ where some $r_i\in A_i$ are nonzero. Choose a
$\Bbbk$-linear basis ${\bf b}:=\{b_j\}_{j=1}^d$ of $\oplus_{i=0}^s A$
of homogeneous elements so that ${\bf b}$ contains all nonzero $r_i$s.

We use $R$ to denote subrings of $\Bbbk$ of the special form
${\mathbb Z}\{f_1,\cdots,f_w\}$ when ${\rm{char}}\; \Bbbk=0$
or of the special form ${\mathbb F}_p\{f_1,\cdots,f_w\}$ when ${\rm{char}}\; \Bbbk=p>0$.
Since $A$ is finitely generated, $A$ is generated by a finite set of homogeneous
generators, say ${\bf g}:=\{g_t\}_{t=1}^d$. We may assume that ${\bf g}$ is $\Bbbk$-linearly
independent and contains ${\bf b}$. Then there is a subring $R\subseteq \Bbbk$ of
the form specified as above such that $\phi$ restricts a
surjective $R$-algebra endomorphism of $A_R$, where $A_R$ denotes the $R$-subalgebra
$R\{g_1,\cdots,g_d\}$ of $A$ generated by ${\bf g}$.
Adding only finitely many new $f_i$ to $R$ if necessary we can assume that
every product of any two generators $g_{t_1},g_{t_2}\in {\bf g}$ has
coefficients in $R$ in terms of the basis ${\bf b}$ if such a product has
degree no more than $s$.
By the choice of ${\bf b}$ and ${\bf g}$, it is clear that $r\in A_R$
and that $\oplus_{i=0}^{s} (A_R)_i$ is a finitely generated free
$R$-module with $R$-basis ${\bf b}$. By the
construction of $R$, every simple factor ring $F$ of $R$ is a finite
field. The induced map $\phi_R\otimes F: A_R\otimes_R F\to A_R\otimes_R F$ is
still a surjective $F$-algebra endomorphism. Since $F$ is finite and $A_R\otimes_R F$
is locally finite over $F$, $A_R\otimes_R F$ is residually finite
in the sense that the ideals of finite index in ring have a trivial
intersection. By \cite[Theorem 3]{Le}, $A_R\otimes_R F$ is Hopfian (and then
$F$-Hopfian). This yields a contradiction because $\phi_R\otimes_R F$ is a
surjective endomorphism such that $\phi_R\otimes_R F(r\otimes_R 1)=0$,
but $r\otimes_R 1\neq 0$. Therefore the assertion follows.

(3) This is basically \cite[Corollary 2.3]{BRY}.
\end{proof}

We need Hopfian algebras in the next lemma.

\begin{lemma}
\label{yylem3.6} Suppose $A$ is strongly Hopfian.
\begin{enumerate}
\item[(1)]
If $A$ is detectable, then $A$ is cancellative.
\item[(2)]
If $A$ is strongly detectable, then $A$ is strongly cancellative.
\end{enumerate}
\end{lemma}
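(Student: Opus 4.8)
The plan is to extract from detectability a surjective algebra homomorphism $B\to A$ and then promote it to an isomorphism using the Hopfian property of the polynomial extensions of $A$. I will treat both parts at once, writing $\underline{t}=t_1,\dots,t_n$ (with $n=1$ for part (1)).

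First I would fix an isomorphism $\phi\colon A[\underline{t}]\to B[\underline{s}]$ and set $u_i=\phi(t_i)$ and $C=\phi(A)$. Since each $t_i$ is central in $A[\underline{t}]$ and $\phi$ is an isomorphism, each $u_i$ is central in $B[\underline{s}]$, and $\phi$ restricts to an isomorphism $\phi|_A\colon A\to C$. Because $A[\underline{t}]$ is a polynomial ring over $A$ in the central variables $t_i$, applying $\phi$ identifies $B[\underline{s}]=C[u_1,\dots,u_n]$ as a polynomial ring over $C$ in the central variables $u_i$. In particular there is a well-defined augmentation homomorphism $\eta\colon B[\underline{s}]=C[u_1,\dots,u_n]\to C$ that is the identity on $C$ and kills every $u_i$; it is multiplicative precisely because the $u_i$ are central and free over $C$. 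This structural identification is the heart of the argument.

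Next I would produce the surjection. The (strong) detectability of $A$ gives $B[\underline{s}]=B\{u_1,\dots,u_n\}$, i.e.\ $B[\underline{s}]$ is generated as an algebra by $B$ together with the $u_i$. Applying the ring homomorphism $\eta$ and using $\eta(u_i)=0$, the image $\eta(B[\underline{s}])$ is the subalgebra generated by $\eta(B)$, hence equals $\eta(B)$; on the other hand $\eta$ is onto $C$ because it fixes $C$. Therefore $\eta|_B\colon B\to C$ is surjective, and composing with the isomorphism $(\phi|_A)^{-1}\colon C\to A$ yields a surjective algebra homomorphism $\theta\colon B\to A$.

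Finally I would invoke the Hopfian property. Let $\Theta\colon B[\underline{s}]\to A[\underline{t}]$ be the homomorphism extending $\theta$ and sending each $s_i\mapsto t_i$; it is well defined (the $t_i$ are central) and surjective, since $\theta$ is surjective and the images of $B$ and of the $s_i$ generate $A[\underline{t}]$. Then $\Theta\circ\phi$ is a surjective endomorphism of $A[\underline{t}]$, which is Hopfian by the strong Hopfian hypothesis, so $\Theta\circ\phi$ is an automorphism. Consequently $\Theta=(\Theta\circ\phi)\circ\phi^{-1}$ is an isomorphism, and restricting to $B$ shows $\theta=\Theta|_B$ is injective; being also surjective, $\theta\colon B\to A$ is an isomorphism. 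Taking $n=1$ proves (1) and arbitrary $n$ proves (2). The step I expect to be the main obstacle is the construction of $\theta$: one must recognize $B[\underline{s}]$ as a polynomial ring over $C=\phi(A)$ so that the augmentation $\eta$ exists, and then see that detectability is exactly the hypothesis forcing $\eta|_B$ to be onto. The Hopfian step afterwards is formal.
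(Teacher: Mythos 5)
Your proof is correct, and while it rests on the same two pillars as the paper's argument --- detectability supplies a surjection and the strong Hopfian hypothesis upgrades it to an isomorphism --- the implementation is genuinely different. The paper stays on the $B$-side: it takes the endomorphism $\pi$ of $B[\underline{s}]$ that fixes $B$ and sends $s_i\mapsto f_i:=\phi(t_i)$ (well defined since the $f_i$ are central); detectability says exactly that $\pi$ is surjective, Hopfianness of $B[\underline{s}]\cong A[\underline{t}]$ makes $\pi$ an isomorphism, and this means the $f_i$ are central polynomial indeterminates over $B$, i.e.\ $B[\underline{s}]=B[f_1,\dots,f_n]$; the conclusion then follows by passing to quotients,
\[
A\cong A[\underline{t}]/(t_i)\xrightarrow{\;\overline{\phi}\;} B[f_1,\dots,f_n]/(f_i)\cong B.
\]
You instead work on the $A$-side: identifying $B[\underline{s}]=C[u_1,\dots,u_n]$ with $C=\phi(A)$, you use the augmentation $\eta$ killing the $u_i$ to manufacture the surjection $\theta=(\phi|_A)^{-1}\circ\eta|_B\colon B\to A$ (this is where detectability enters), and you apply Hopfianness to the composite $\Theta\circ\phi$ on $A[\underline{t}]$ to force injectivity of $\theta$. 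The two routes in fact produce the same isomorphism --- your $\theta$ is inverse to the paper's $\overline{\phi}$, because $b-\eta(b)$ lies in the ideal $(f_1,\dots,f_n)=(u_1,\dots,u_n)$ for every $b\in B$ --- but the byproducts differ: the paper's version records the structural fact that $\phi(t_1),\dots,\phi(t_n)$ are genuine polynomial variables over $B$, whereas yours exhibits the isomorphism $B\to A$ concretely as ``take the constant term in the $u_i$ and pull back along $\phi$,'' at the cost of the extra bookkeeping needed to set up $\eta$ and $\Theta$ and to check that $\Theta\circ\phi$ is a surjective endomorphism.
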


\begin{proof} We only prove (2).

Let $\phi: A[\underline{t}]\to B[\underline{s}]$ be an isomorphism.
Let $f_i=\phi(t_i)$ for $i=1,\cdots, n$. Then $f_i$ are central elements
in $B[\underline{s}]$. Thus $B\{f_1,\cdots,f_n\}$ is a homomorphic image
of $B[s_1,\cdots,s_n]$ by sending $s_i\mapsto f_i$. Suppose $A$ is strongly
detectable. Then $B\{f_1,\cdots, f_n\}=B[\underline{s}]$.
Then we have algebra homomorphisms
\begin{equation}
\label{E3.6.1}\tag{E3.6.1}
B[\underline{s}]\xrightarrow{\pi} B\{f_1,\cdots,f_n\}\xrightarrow{=} B[\underline{s}]
\cong A[\underline{t}].
\end{equation}
Since $A$ is strongly Hopfian, $A[\underline{t}]$ and then $B[\underline{s}]$
are Hopfian. Now \eqref{E3.6.1} implies that $\pi$ is an isomorphism.
As a consequence, $B\{f_1,\cdots, f_n\}=B[f_1,\cdots,f_n]$ considering $f_i$ as central
indeterminants in $B[f_1,\cdots,f_n]$. Now we have
$$A\cong A[\underline{t}]/(t_i)\xrightarrow{\overline{\phi}} 
B[f_1,\cdots, f_n]/(f_i)\cong B.$$
Therefore $A$ is strongly cancellative.
\end{proof}

Combining these results we have the following diagram for an 
algebra.

$$\begin{CD}
@. {\text{[Discriminant being] effective}}  \\
@. @V{\text{Remark \ref{yyrem3.7}(5)}}\not\Uparrow V{\text{Theorem \ref{yythm2.9}}}V\\
{\text{(strongly) $\LND$-rigid}} @>{\text{easy}}>
\not\Leftarrow{\text{Remark \ref{yyrem3.7}(4)}}> {\text{(strongly) $\LND_Z$-rigid}} \\
@V{\text{Lemma \ref{yylem2.4}}}V \Uparrow {\text{Remark \ref{yyrem3.7}(6)}}V 
@V \Uparrow {\text{Remark \ref{yyrem3.7}(7)}} V{\text{Lemma \ref{yylem2.6}}}V\\
{\text{(strongly) retractable}} @>{\text{easy}}
>\not\Leftarrow{\text{Remark \ref{yyrem3.7}(3)}}> {\text{(strongly) $Z$-retractable}} \\
@. @V{\text{Remark \ref{yyrem3.7}(2)}}\not\Uparrow V{\text{Lemma \ref{yylem3.2}}}V\\
@. {\text{(strongly) detectable}}\\
@. @V {\text{Remark \ref{yyrem3.7}(1)}}\not\Uparrow V {\text{Lemma \ref{yylem3.6}}}V\\
@. {\text{(strongly) cancellative}}
\end{CD}
$$
If ${\rm{char}}\; \Bbbk>0$, one should replace $\LND$ by $\LND^H$ in the above
diagram. 

\begin{remark}
\label{yyrem3.7} 
The following are easy.
\begin{enumerate}
\item[(1)]
$\Bbbk[x]$ is (strongly) cancellative, but not detectable.
\item[(2)]
The algebra in Example \ref{yyex3.3} is strongly detectable,
but not retractable (or $Z$-retractable).
\item[(3)]
Let $Z=\Bbbk[x^{\pm 1}]$ and $A=M_2(Z)$. Then the center of 
$A$ is $Z$. By Example \ref{yyex1.3}(1), $Z$ is strongly retractable.
Therefore $A$ is strongly $Z$-retractable. 
Consider the conjugation automorphism $\phi: A[t]\to A[t]$ determined by 
$$\phi: a \longrightarrow \begin{pmatrix} 1& t\\ 0&1\end{pmatrix} a
\begin{pmatrix} 1& -t\\ 0&1\end{pmatrix}, \qquad \forall a\in A.$$
It is easy to see that $\phi(A)\neq A$. Therefore $A$ is not retractable.
\item[(4)]
One can also check that the algebra $A$ in part (3) is 
strongly $\LND_Z$-rigid, but not $\LND$-rigid by Lemma \ref{yylem1.9}.

Here is another example. 
Let $A$ be an affine PI prime such that the discriminant
over its affine center is effective, for example, $A$ is in Example
\ref{yyex5.1}. By Theorem \ref{yythm2.9}, $A$ is strongly $\LND_Z$-rigid.
If $A$ is not a domain [Example \ref{yyex5.1}], then 
it is not $\LND$-rigid by Lemma \ref{yylem1.9}. 
\item[(5)]
Let $A=\Bbbk[x^{\pm 1}]$. Since $A$ is a commutative domain, 
the discriminant of $A$ over its center is trivial, whence,
not effective. But $A$ is strongly $\LND$-rigid and 
strongly $\LND_Z$-rigid by Example \ref{yyex1.3}(1).
\item[(6)]
The strong $\LND$-rigidity  (when  ${\rm{char}}\; \Bbbk=0$) and
the strong $\LND^H$-rigidity follow
from the strong retractability without any hypotheses as given in 
Lemma \ref{yylem2.4}. For example, 
if $A$ is not strongly $\LND$-rigidity and if ${\rm{char}}\; \Bbbk=0$,
then there is a locally nilpotent derivation of $\partial$ of
$A[t_1,\cdots,t_n]$, for some $n\geq 1$, such that $\ker \partial 
\not\supseteq A$. Consider the automorphism 
$$\exp(\partial, t): A[t_1,\cdots,t_n][t]\to A[t_1,\cdots,t_n][t]$$
determined by
$$\exp(\partial, t): a\mapsto \sum_{i=0}^{\infty} \frac{t^i}{i!} \partial^i(a), \quad 
t\mapsto t,$$
for all $a\in A[t_1,\cdots,t_n]$. Then $\exp(\partial, t)(A)\neq A$. 
Therefore $A$ is not strongly retractable. If $\partial$ is a 
higher derivation, one needs use the automorphism similar to the one in 
Definition \ref{yydef1.7}(2). 
\item[(7)]
The strong $\LND_Z$-rigidity (when ${\rm{char}}\; \Bbbk=0$) and 
the strong $\LND^H_Z$-rigidity follow
from the strong $Z$-retractability without any hypotheses as given in 
Lemma \ref{yylem2.6}. The argument in part (6) can be used with
some small modification.
\end{enumerate}
\end{remark}

One advantage of considering the detectable property is the following.

\begin{lemma}
\label{yylem3.8}
Let $A$ be an algebra with center $Z(A)$. If $Z(A)$ is {\rm{(}}strongly{\rm{)}}
detectable, so is $A$.
\end{lemma}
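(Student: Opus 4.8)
The plan is to reduce the detectability of $A$ to that of its center by passing to centers on both sides of an isomorphism $\phi\colon A[\underline{t}]\cong B[\underline{s}]$. The one structural fact I need is that forming a polynomial extension commutes with taking the center: since the $t_i$ are central indeterminates, an element $\sum_{\alpha} a_\alpha \underline{t}^{\,\alpha}\in A[\underline{t}]$ is central if and only if each coefficient $a_\alpha$ lies in $Z(A)$, so that $Z(A[\underline{t}])=Z(A)[\underline{t}]$, and likewise $Z(B[\underline{s}])=Z(B)[\underline{s}]$.

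First I would prove the ``strongly'' version, the non-strong case being identical with a single variable. Given an algebra isomorphism $\phi\colon A[\underline{t}]\to B[\underline{s}]$, it carries the center of the source onto the center of the target, hence restricts to an isomorphism
$$\psi:=\phi\mid_{Z(A)[\underline{t}]}\colon \; Z(A)[\underline{t}]\;\xrightarrow{\ \cong\ }\; Z(B)[\underline{s}].$$
This $\psi$ is an isomorphism of exactly the shape required in Definition \ref{yydef3.1} applied to the algebra $Z(A)$, with the role of the auxiliary algebra $B$ played by $Z(B)$. Since $Z(A)$ is strongly detectable by hypothesis, applying the definition to $\psi$ yields, for each $i$,
$$s_i\in Z(B)\{\psi(t_1),\dots,\psi(t_n)\}.$$

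Finally, since $\psi$ is the restriction of $\phi$, we have $\psi(t_j)=\phi(t_j)$ for all $j$, and $Z(B)\subseteq B$ gives $Z(B)\{\phi(t_1),\dots,\phi(t_n)\}\subseteq B\{\phi(t_1),\dots,\phi(t_n)\}$. Hence $s_i\in B\{\phi(t_1),\dots,\phi(t_n)\}$ for every $i$, which is precisely the assertion that $A$ is strongly detectable; the same argument with a single variable shows that $A$ is detectable whenever $Z(A)$ is detectable. I do not expect a genuine obstacle here: the only point requiring care is the identity $Z(A[\underline{t}])=Z(A)[\underline{t}]$ (and its analogue for $B$), which is what guarantees that the restriction of $\phi$ to centers is again a polynomial-ring isomorphism of the exact form to which detectability of $Z(A)$ applies.
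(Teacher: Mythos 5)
Your proof is correct and follows essentially the same route as the paper's own proof: restrict $\phi$ to centers using the identity $Z(A[\underline{t}])=Z(A)[\underline{t}]$, apply the (strong) detectability of $Z(A)$ to the restricted isomorphism, and conclude via the inclusion $Z(B)\{\phi(t_1),\dots,\phi(t_n)\}\subseteq B\{\phi(t_1),\dots,\phi(t_n)\}$. The only difference is that you spell out explicitly the identification of centers of polynomial extensions and the fact that the restricted map has the exact shape required by Definition \ref{yydef3.1}, points the paper's terser proof leaves implicit.
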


\begin{proof} Suppose $B$ is an algebra such that
$\phi: A[\underline{t}]\cong B[\underline{s}]$. Taking
the center, we have $\phi: Z(A)[\underline{t}]\cong Z(B)[\underline{s}]$.
Since $Z(A)$ is strongly detectable, $s_i\in Z(B)\{\phi(t_j)\}$ for all $i$.
Thus $s_i\in B\{\phi(t_j)\}$ for all $i$. This means that $A$ is strongly
detectable.
\end{proof}

Another property concerning detectable property is the following.

\begin{lemma}
\label{yylem3.9} Let $A$ be an algebra and $J$ be the prime radical
of $A$ that is nilpotent. If $A/J$ is {\rm{(}}strongly{\rm{)}}
detectable, so is $A$.
\end{lemma}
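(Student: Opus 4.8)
The plan is to fix an algebra $B$ and an isomorphism $\phi\colon A[\underline{t}]\cong B[\underline{s}]$ and to show that $s_i\in B\{\phi(t_1),\dots,\phi(t_n)\}$ for each $i$, which is exactly the (strong) detectability of $A$; the argument is uniform in $n$, so the non-strong case is just $n=1$. First I would observe that $J[\underline{t}]$ is the prime radical of $A[\underline{t}]$: it is an ideal, it is nilpotent because $J$ is nilpotent and the $t_j$ are central (so $(J[\underline{t}])^\ell=J^\ell[\underline{t}]$), and the quotient $A[\underline{t}]/J[\underline{t}]\cong (A/J)[\underline{t}]$ is semiprime, since a central polynomial extension of the semiprime ring $A/J$ is again semiprime. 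As the prime radical is preserved by any algebra isomorphism, $\phi$ carries $J[\underline{t}]$ onto the prime radical $I$ of $B[\underline{s}]$, and $I=\phi(J[\underline{t}])$ is nilpotent.

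The next step is to recognize $I$ as an extended ideal. Using the standard fact that the prime radical commutes with central polynomial extensions, I would write $I=J'[\underline{s}]$, where $J':=I\cap B$ is the prime radical of $B$; since $J'\subseteq I$ and $I$ is nilpotent, $J'$ is nilpotent as well. Consequently $\phi$ induces an algebra isomorphism $\bar\phi\colon (A/J)[\underline{t}]\cong (B/J')[\underline{s}]$ of central polynomial rings. Now I apply the hypothesis that $A/J$ is (strongly) detectable to $\bar\phi$: writing $f_j:=\phi(t_j)$ and $C:=B\{f_1,\dots,f_n\}$, detectability yields $\bar s_i\in (B/J')\{\bar f_1,\dots,\bar f_n\}$ for all $i$, and since $(B/J')\{\bar f_j\}$ is the image of $C$ under $B[\underline{s}]\to B[\underline{s}]/J'[\underline{s}]$, this lifts to $s_i\in C+J'[\underline{s}]$.

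Finally I would bootstrap on the nilpotency of $J'$. Because $C$ is a subring containing $B$ (hence $J'$) and $J'[\underline{s}]$ is a two-sided ideal of $B[\underline{s}]$, the sum $C+J'[\underline{s}]$ is a subring; it contains $B$ and every $s_i$, so $B[\underline{s}]=C+J'[\underline{s}]$. Multiplying by $(J')^k$ and using $(J')^kC\subseteq C$ gives $(J')^k[\underline{s}]=(J')^kB[\underline{s}]=(J')^k\bigl(C+J'[\underline{s}]\bigr)\subseteq C+(J')^{k+1}[\underline{s}]$, so by induction $B[\underline{s}]=C+(J')^k[\underline{s}]$ for every $k\geq 1$; taking $k$ with $(J')^k=0$ forces $B[\underline{s}]=C=B\{f_1,\dots,f_n\}$, as required. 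The main obstacle is the second step: showing that the prime radical of $B[\underline{s}]$ is exactly $J'[\underline{s}]$ for a nilpotent ideal $J'\subseteq B$, so that the quotient is again a central polynomial ring to which detectability of $A/J$ applies; this rests on the compatibility of the prime radical with central polynomial extensions, and it is what legitimizes the passage to $A/J$. Everything after that is the routine filtration/bootstrap argument, which crucially needs $J'$ to be nilpotent.
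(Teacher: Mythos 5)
Your proof is correct and follows essentially the same route as the paper's: identify the prime radical of $A[\underline{t}]$ as $J[\underline{t}]$ (nilpotent, with semiprime quotient), transport it through $\phi$ to get the prime radical $J'[\underline{s}]$ of $B[\underline{s}]$, apply detectability of $A/J$ to the induced isomorphism of quotients, and lift back using nilpotency of $J'$. Your final bootstrap, proving $B[\underline{s}] = C + (J')^k[\underline{s}]$ for all $k$ by multiplying by powers of $J'$, is a cleaner packaging of the paper's step, which instead rewrites each $s_i$ term by term as a polynomial in the $f_j$ modulo successively higher powers $J(B)^p[\underline{s}]$, but the underlying induction on powers of the nilpotent radical is the same.
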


\begin{proof} Suppose $B$ is any algebra such that
$\phi: A[\underline{t}]\cong B[\underline{s}]$.
Since $J^n=0$ for some $n$, the prime radical of $A[\underline{t}]$ is
$J[\underline{t}]$ which is also nilpotent. This implies that the prime
radical of $B[\underline{s}]$ is nilpotent and of the form
$J(B)[\underline{s}]$ where $J(B)$ is the prime radical of $B$.
Modulo the prime radical, we obtain that
$$\phi': \quad (A/J)[\underline{t}]\cong (B/J(B))[\underline{s}]$$
where $\underline{t}$ and $\underline{s}$ still denote the images of
$\underline{t}$ and $\underline{s}$ in appropriate algebras respectively.
Since $A/J$ is strongly
detectable, $s_i \in (B/J(B))\{f_1,\cdots,f_n\}$ for all
$i$, which means that $s_i\in B\{f_1,\cdots,f_n\}$ modulo $J(B)[\underline{s}]$.
Here $f_i=\phi(t_i)$ for all $i$.
Another way of saying this is, for every $i$,
$$ f_i=s_i+\sum b_{d_1,\cdots,d_n} s_1^{d_1}\cdots s_n^{d_n} \in B\{f_1,\cdots,f_n\}$$
for  $b_{d_1,\cdots,d_n}\in J(B)$. The point is that $s_i=f_i$
modulo $J(B)[\underline{s}]$. Now we re-write $s_i$ as a
polynomial in $f_j$ with coefficients in $J(B)$, starting with,
$$\begin{aligned}
s_i& =f_i -\sum b_{d_1,\cdots,d_n} s_1^{d_1}\cdots s_n^{d_n} \\
&= f_i -\sum b_{d_1,\cdots,d_n} f_1^{d_1}\cdots f_n^{d_n} +
\sum b'_{d'_1,\cdots,d'_n} s_1^{d'_1}\cdots s_n^{d'_n}
\end{aligned}
$$
where $b'_{d'_1,\cdots,d'_n}$ are in $J(B)^2[\underline{s}]$.
This means that $s_i$ equals a polynomial of $f_1,\cdots,f_n$
modulo $J(B)^2[\underline{s}]$. By induction, $s_i$ equals a
polynomial of $f_1,\cdots,f_n$ modulo $J(B)^p[\underline{s}]$
for every $p\geq 1$. Since $J(B)$ is nilpotent,
$s_i$ is a polynomial of $f_1,\cdots,f_n$ (with coefficients
in $B$) when taking $p\gg 0$. Therefore $s_i\in B\{f_1,\cdots,f_n\}$ 
as required.
\end{proof}

We also mention an easy consequence of Lemma \ref{yylem3.8} and
\cite[Theorem 3.3]{AEH}.

\begin{proposition}
\label{yypro3.10}
If the center of $A$ is an affine domain of GKdimension one
that is not isomorphic to $\Bbbk'[x]$ for some field extension
$\Bbbk'\supseteq \Bbbk$, then $A$ is  strongly
detectable.
\end{proposition}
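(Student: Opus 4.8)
The plan is to reduce Proposition \ref{yypro3.10} directly to the detectability of the center via Lemma \ref{yylem3.8}, so the only real content is showing that the center $Z(A)$ is strongly detectable under the stated hypotheses. By Lemma \ref{yylem3.8}, if $Z(A)$ is strongly detectable, then so is $A$; thus it suffices to prove that an affine commutative domain $Z$ of GKdimension one, not isomorphic to $\Bbbk'[x]$ for any field extension $\Bbbk'\supseteq \Bbbk$, is strongly detectable. This is where Example \ref{yyex2.2} enters: by the Abhyankar-Eakin-Heinzer result \cite[Theorem 3.3 and Corollary 3.4]{AEH}, such a $Z$ is in fact strongly \emph{retractable}.

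The key chain of implications is then purely formal. First I would invoke the observation (stated just after Definition \ref{yydef2.5}) that every strongly retractable algebra is strongly $Z$-retractable, since retractability forces $\phi(Z(A))=Z(B)$ a fortiori once $\phi(A)=B$. Next, Lemma \ref{yylem3.2} gives that a strongly $Z$-retractable algebra is strongly detectable. Applying these two steps to $Z=Z(A)$, I conclude that $Z$ is strongly detectable. Finally, Lemma \ref{yylem3.8} lifts this to $A$ itself, yielding that $A$ is strongly detectable, which is exactly the conclusion of the proposition.

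There is essentially no hard computational step here; the substance of the argument is entirely outsourced to \cite{AEH} (via Example \ref{yyex2.2}) and to the already-established lemmas. The one point that requires a moment of care is the hypothesis ``$Z$ is not $\Bbbk'[x]$'': this is precisely the case in Example \ref{yyex2.2} that yields strong retractability rather than merely strong cancellability, and strong retractability is what is needed to feed into the implication chain. If instead $Z\cong \Bbbk'[x]$, one only obtains strong cancellability of $Z$, which is insufficient to conclude detectability of $Z$ (indeed, by Remark \ref{yyrem3.7}(1), $\Bbbk[x]$ is cancellative but \emph{not} detectable), so excluding that case is genuinely necessary for the stated argument. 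I therefore expect the proof to be short: cite Example \ref{yyex2.2} for strong retractability of $Z$, pass through Lemma \ref{yylem3.2} to strong detectability of $Z$, and apply Lemma \ref{yylem3.8} to transfer detectability from $Z(A)$ to $A$.
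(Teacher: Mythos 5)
Your proposal is correct and follows the paper's own proof essentially step for step: strong retractability of $Z(A)$ from \cite[Theorem 3.3]{AEH} (as recorded in Example \ref{yyex2.2}), then Lemma \ref{yylem3.2} to pass to strong detectability of $Z(A)$, then Lemma \ref{yylem3.8} to transfer this to $A$. Your added remarks---making explicit that retractable implies $Z$-retractable for the commutative algebra $Z(A)$, and explaining via Remark \ref{yyrem3.7}(1) why the exclusion of $\Bbbk'[x]$ is genuinely needed---are accurate clarifications of points the paper leaves implicit.
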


\begin{proof} By \cite[Theorem 3.3]{AEH}, $Z(A)$ is
strongly retractable. By Lemma \ref{yylem3.2},
$Z(A)$ is strongly detectable. The assertion
follows from Lemma \ref{yylem3.8}.
\end{proof}

\begin{lemma}
\label{yylem3.11}
Let $\Bbbk'$ is a field extension of $\Bbbk$.
If $A\otimes_{\Bbbk} \Bbbk'$ is {\rm{(}}strongly{\rm{)}} detectable
as an algebra over $\Bbbk'$, then $A$ is {\rm{(}}strongly{\rm{)}}
detectable as an algebra over $\Bbbk$.
\end{lemma}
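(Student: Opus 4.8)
The plan is to reduce the claim about $A$ over $\Bbbk$ to the assumed detectability of $A\otimes_{\Bbbk}\Bbbk'$ over $\Bbbk'$ by base change, and then to descend the resulting membership back down to $\Bbbk$ using that $\Bbbk'$ is a free (hence faithfully flat) $\Bbbk$-module. I will only treat the strong version; the non-strong version is the special case $n=1$, with an identical argument.

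First I would start from an arbitrary $\Bbbk$-algebra isomorphism $\phi\colon A[\underline{t}]\cong B[\underline{s}]$ and set $f_i:=\phi(t_i)$. Applying $-\otimes_{\Bbbk}\Bbbk'$ and using the canonical identification $R[\underline{t}]\otimes_{\Bbbk}\Bbbk'\cong (R\otimes_{\Bbbk}\Bbbk')[\underline{t}]$ of $\Bbbk'$-algebras (valid for any $\Bbbk$-algebra $R$, since $R[\underline{t}]=R\otimes_{\Bbbk}\Bbbk[\underline{t}]$, and sending $t_i\mapsto t_i$), the map $\phi$ becomes a $\Bbbk'$-algebra isomorphism
$$
\Phi:=\phi\otimes_{\Bbbk}\Bbbk'\colon
(A\otimes_{\Bbbk}\Bbbk')[\underline{t}]\;\cong\;(B\otimes_{\Bbbk}\Bbbk')[\underline{s}],
\qquad \Phi(t_i)=f_i\otimes 1 .
$$
Since $A\otimes_{\Bbbk}\Bbbk'$ is strongly detectable over $\Bbbk'$, applying Definition \ref{yydef3.1} with the $\Bbbk'$-algebra $B\otimes_{\Bbbk}\Bbbk'$ in the role of $B$ yields, for each $i$,
$$
s_i\otimes 1\in (B\otimes_{\Bbbk}\Bbbk')\{f_1\otimes 1,\dots,f_n\otimes 1\}.
$$

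Next I would identify this base-changed subalgebra. Writing $C:=B\{f_1,\dots,f_n\}\subseteq B[\underline{s}]$, flatness of $\Bbbk'$ over $\Bbbk$ realizes $C\otimes_{\Bbbk}\Bbbk'$ as a $\Bbbk'$-subalgebra of $B[\underline{s}]\otimes_{\Bbbk}\Bbbk'=(B\otimes_{\Bbbk}\Bbbk')[\underline{s}]$. Because $C$ is $\Bbbk$-spanned by the words in elements of $B$ and the $f_j$, its $\Bbbk'$-span is precisely the $\Bbbk'$-subalgebra generated by $B\otimes_{\Bbbk}\Bbbk'$ together with the $f_j\otimes 1$; that is,
$$
C\otimes_{\Bbbk}\Bbbk'=(B\otimes_{\Bbbk}\Bbbk')\{f_1\otimes 1,\dots,f_n\otimes 1\}.
$$
Hence $s_i\otimes 1\in C\otimes_{\Bbbk}\Bbbk'$ for every $i$. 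For the final descent step, I would choose a $\Bbbk$-basis $\{e_{\alpha}\}$ of $\Bbbk'$ with $e_{0}=1$, so that $B[\underline{s}]\otimes_{\Bbbk}\Bbbk'=\bigoplus_{\alpha}B[\underline{s}]\otimes e_{\alpha}$ and, compatibly, $C\otimes_{\Bbbk}\Bbbk'=\bigoplus_{\alpha}C\otimes e_{\alpha}$. The element $s_i\otimes 1$ sits in the $e_{0}$-summand with component $s_i$ and is zero in all other summands, so membership $s_i\otimes 1\in C\otimes_{\Bbbk}\Bbbk'$ forces $s_i\in C=B\{f_1,\dots,f_n\}$. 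As this holds for all $i$, $A$ is strongly detectable over $\Bbbk$.

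The routine inputs are the commutation of the polynomial construction with base change and the action of $\Phi$ on the variables $t_i$. The one place demanding care—and what I regard as the crux—is the descent: one must verify that the subalgebra $B\{f_j\}$ behaves well under $-\otimes_{\Bbbk}\Bbbk'$, so that the subalgebra produced over $\Bbbk'$ is genuinely the base change of the subalgebra over $\Bbbk$ (and not something strictly larger), and then that freeness of $\Bbbk'$ over $\Bbbk$ lets one recover membership over $\Bbbk$ from membership over $\Bbbk'$. Both points rest on $\Bbbk'$ being a free $\Bbbk$-module, which holds for any field extension.
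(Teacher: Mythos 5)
Your proof is correct and takes essentially the same route as the paper's: base change the isomorphism to $\Bbbk'$, invoke detectability of $A\otimes_{\Bbbk}\Bbbk'$ there, identify $(B\otimes_{\Bbbk}\Bbbk')\{f_1\otimes 1,\dots,f_n\otimes 1\}$ with $B\{f_1,\dots,f_n\}\otimes_{\Bbbk}\Bbbk'$, and descend using that $\Bbbk'$ is free (faithfully flat) over $\Bbbk$. The only cosmetic difference is that you descend the single membership $s_i\otimes 1\in C\otimes_{\Bbbk}\Bbbk'$ via a basis of $\Bbbk'$ containing $1$, whereas the paper descends the subalgebra equality $C\otimes_{\Bbbk}\Bbbk'=B[\underline{s}]\otimes_{\Bbbk}\Bbbk'$ to $C=B[\underline{s}]$ — the same argument in substance, with your write-up actually spelling out the identification step that the paper leaves as ``one can easily show.''
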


\begin{proof} We only show the ``strongly'' version.
Suppose that $B$ is an algebra over $\Bbbk$ such that
$\phi: A[\underline{t}]\cong B[\underline{s}]$ is a 
$\Bbbk$-algebra isomorphism. Let $C$ be the $\Bbbk$-subalgebra of
$B\{\phi(t_i)\}_{i=1}^n$. Then $C\subseteq B[\underline{s}]$.
We claim that $C=B[\underline{s}]$. 

For any $\Bbbk$-module $M$, let $M_{\Bbbk'}$ denote $M\otimes_{\Bbbk}
\Bbbk'$. Consider the isomorphism
$\phi_{\Bbbk'}: A_{\Bbbk'}[\underline{t}]\cong
B_{\Bbbk'}[\underline{s}]$. By hypothesis,
$A_{\Bbbk'}$ is a {\rm{(}}strongly{\rm{)}} detectable
algebra over $\Bbbk'$. Then $B_{\Bbbk'}\{\phi(t_i)\}_{i=1}^n$
is $B_{\Bbbk'}[\underline{s}]=(B[\underline{s}])_{\Bbbk'}$.
Now one can easily show that $C_{\Bbbk'}=B_{\Bbbk'}\{\phi(t_i)\}_{i=1}^n$.
Thus $C_{\Bbbk'}=(B[\underline{s}])_{\Bbbk'}$, consequently,
$C=B[\underline{s}]$. Therefore $A$ is strongly
detectable as an algebra over $\Bbbk$.
\end{proof}

\section{Applications to the cancellation problem}
\label{yysec4}

In this section we will use the results proven in the previous
sections to show some classes of algebras are cancellative.
We first prove Corollary \ref{yycor0.3}.

\begin{theorem}
\label{yythm4.1} If $A$ is left {\rm{(}}or right{\rm{)}} artinian,
then $A$ is strongly detectable. As a consequence, $A$ is
strongly cancellative.
\end{theorem}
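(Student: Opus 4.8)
The plan is to combine the structural reduction lemmas of Section~\ref{yysec3} with the Wedderburn-Artin classification, reducing strong detectability of $A$ all the way down to that of a finite direct sum of fields, where strong retractability is already available from Lemma~\ref{yylem2.3}.

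First I would recall the classical structure theory of artinian rings. If $A$ is left artinian, then its Jacobson radical $J$ is nilpotent; since a nilpotent ideal lies inside every prime ideal, $J$ coincides with the prime radical of $A$, which is therefore nilpotent. The quotient $A/J$ is semisimple artinian, so by Wedderburn-Artin it is a finite direct sum $\bigoplus_{i=1}^r M_{n_i}(D_i)$ of matrix rings over division rings $D_i$. The right artinian case is entirely symmetric.

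Next I would pass to the center and then invoke the two reduction lemmas in turn. The center of $A/J$ is $\bigoplus_{i=1}^r Z(M_{n_i}(D_i)) = \bigoplus_{i=1}^r Z(D_i)$, a finite direct sum of fields, hence in particular a finite direct sum of division rings. By Lemma~\ref{yylem2.3} this center is strongly retractable, therefore strongly $Z$-retractable, and so strongly detectable by Lemma~\ref{yylem3.2}. Applying Lemma~\ref{yylem3.8} (detectability of the center forces detectability of the whole algebra), I conclude that $A/J$ is strongly detectable. Since $J$ is the nilpotent prime radical of $A$, Lemma~\ref{yylem3.9} then lifts strong detectability from $A/J$ to $A$ itself.

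For the final clause, I would use the Hopkins-Levitzki theorem: a left (or right) artinian ring is left (or right) noetherian, hence strongly Hopfian by Lemma~\ref{yylem3.5}(1). With $A$ both strongly Hopfian and strongly detectable, Lemma~\ref{yylem3.6}(2) yields that $A$ is strongly cancellative. No single step is a genuine obstacle here; the only real content is assembling the chain of reductions (nilpotent radical, then center, then finite direct sum of fields) and checking that the prime radical of an artinian ring is nilpotent, so that the hypothesis of Lemma~\ref{yylem3.9} is met.
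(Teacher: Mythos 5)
Your proposal is correct and follows essentially the same route as the paper's own proof: nilpotent Jacobson radical equals the prime radical (Lemma \ref{yylem3.9}), Wedderburn--Artin reduction to a semisimple quotient whose center is a finite direct sum of fields (Lemmas \ref{yylem2.3}, \ref{yylem3.2}, \ref{yylem3.8}), and then Hopfian plus detectable gives cancellative (Lemmas \ref{yylem3.5}(1), \ref{yylem3.6}(2)). The only difference is cosmetic: you cite Hopkins--Levitzki explicitly where the paper simply notes that artinian implies noetherian.
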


\begin{proof} Since $A$ is artinian, it is noetherian. By Lemma
\ref{yylem3.5}(1), it is strongly Hopfian. Now the strongly cancellative
property is a consequence of the strongly detectable property by
Lemma \ref{yylem3.6}(2). It remains to show that $A$ is strongly 
detectable. 

Let $J$ be the Jacobson radical of $A$. Then $J$ is also
the prime radical of $A$ and $J$ is nilpotent. By Lemma \ref{yylem3.9},
it suffices to show that $A':=A/J$ is strongly detectable.
Since $A'$ is a finite direct sum of matrix algebras over division rings,
$Z(A')$ is a finite direct sum of fields. By Lemmas
\ref{yylem2.3} and \ref{yylem3.2}, $Z(A')$ is strongly retractable,
and then strongly detectable. By Lemma
\ref{yylem3.8}, $A'$ is strongly detectable as required.
\end{proof}

\begin{theorem}
\label{yythm4.2}
Let $A$ be  an algebra with center $Z$. Suppose $J$ is the
prime radical of $Z$ such that
{\rm{(a)}} $J$ is nilpotent and {\rm{(b)}} $Z/J$ is a finite direct sum of
fields.
\begin{enumerate}
\item[(1)]
$A$ is strongly detectable.
\item[(2)]
If further $A$ is strongly Hopfian, then $A$ is strongly cancellative.
\end{enumerate}
\end{theorem}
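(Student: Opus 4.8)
The plan is to prove part (1) by a two-step reduction that pushes the question first from $A$ onto its center, and then from the center onto its semisimple quotient, at which point the finite-direct-sum-of-fields case from Section 2 finishes the job. The first move is to invoke Lemma \ref{yylem3.8}: since strong detectability of $Z = Z(A)$ forces strong detectability of $A$, it suffices to prove that $Z$ is strongly detectable. This is convenient because $Z$ is commutative, so all the subtlety of the noncommutative structure of $A$ is now irrelevant.

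Next I would exploit hypothesis (a). The prime radical $J$ of $Z$ is nilpotent, so Lemma \ref{yylem3.9}, applied with the algebra taken to be $Z$ itself, reduces the problem to showing that $Z/J$ is strongly detectable. By hypothesis (b), $Z/J$ is a finite direct sum of fields, hence in particular a finite direct sum of division rings. Lemma \ref{yylem2.3} then gives that $Z/J$ is strongly retractable, and Lemma \ref{yylem3.2} upgrades this to strongly detectable. Reading the chain backwards—strong detectability of $Z/J$ yields strong detectability of $Z$ by Lemma \ref{yylem3.9}, which in turn yields strong detectability of $A$ by Lemma \ref{yylem3.8}—establishes part (1).

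For part (2) the work is already done: assuming in addition that $A$ is strongly Hopfian, Lemma \ref{yylem3.6}(2) states precisely that a strongly Hopfian algebra which is strongly detectable is strongly cancellative, so combining that lemma with part (1) gives the conclusion.

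I do not expect a serious obstacle here, since the statement is essentially an assembly of the machinery built in Sections 2 and 3. The only points requiring care are bookkeeping ones: making sure the two reduction lemmas are applied in the correct order (first Lemma \ref{yylem3.8} to pass to the center, then Lemma \ref{yylem3.9} to kill the nilpotent prime radical \emph{of that center}, rather than of $A$), and observing that a finite direct sum of fields is a finite direct sum of division rings so that Lemma \ref{yylem2.3} is genuinely applicable to $Z/J$.
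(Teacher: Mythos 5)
Your proposal is correct and is essentially the paper's own argument: the paper's proof of this theorem is just the terse statement ``similar to the proof of Theorem \ref{yythm4.1}, $Z$ is strongly detectable; by Lemma \ref{yylem3.8}, $A$ is strongly detectable,'' which unwinds to exactly your chain (Lemma \ref{yylem2.3} plus Lemma \ref{yylem3.2} on $Z/J$, Lemma \ref{yylem3.9} applied to $Z$, Lemma \ref{yylem3.8} to pass to $A$), and part (2) follows from Lemma \ref{yylem3.6}(2) in both versions. Your explicit care that Lemma \ref{yylem3.9} is applied to the center $Z$ rather than to $A$ matches the paper's intent precisely.
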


Theorem \ref{yythm0.2} is an immediate consequence of the above theorem.

\begin{proof}[Proof of Theorem \ref{yythm4.2}]
(1) Similar to the proof of Theorem \ref{yythm4.1}, $Z$ is strongly detectable.
By Lemma \ref{yylem3.8} $A$ is strongly detectable.

(2) Follows from Lemma \ref{yylem3.6} and part (1).
\end{proof}

The following is an immediate consequence of the above theorem and Lemma
\ref{yylem3.5}(1).

\begin{corollary}
\label{yycor4.3}
Let $A$ be a left noetherian algebra such that its center
is artinian. Then $A$ is strongly detectable and strongly cancellative.
\end{corollary}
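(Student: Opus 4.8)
The plan is to derive this corollary directly from Theorem \ref{yythm4.2}, so that the only real work is to verify that the two hypotheses on the center are satisfied and that $A$ is strongly Hopfian. Both follow from standard facts, so I expect the argument to be short.

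First I would examine the center $Z=Z(A)$, which by hypothesis is a commutative artinian ring. Let $J$ be its prime radical; since $Z$ is commutative, $J$ coincides with the nilradical of $Z$. For a commutative artinian ring the nilradical is nilpotent: the descending chain $J\supseteq J^2\supseteq\cdots$ stabilizes, and a Nakayama-type argument then forces $J^n=0$ for some $n$. This establishes hypothesis (a) of Theorem \ref{yythm4.2}. For hypothesis (b), the quotient $Z/J$ is a reduced commutative artinian ring; such a ring has finitely many prime ideals, all of them maximal and with trivial intersection, so by the Chinese Remainder Theorem $Z/J$ is a finite direct product of fields. Thus $Z$ satisfies both (a) and (b).

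With the hypotheses verified, Theorem \ref{yythm4.2}(1) immediately yields that $A$ is strongly detectable. For the cancellative conclusion I would invoke Lemma \ref{yylem3.5}(1): since $A$ is left noetherian, it is strongly Hopfian. Theorem \ref{yythm4.2}(2) then applies and gives that $A$ is strongly cancellative, completing the proof.

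There is no genuine obstacle here; the content is entirely routine commutative algebra—namely the structure theory of commutative artinian rings—combined with the already-proven Theorem \ref{yythm4.2} and Lemma \ref{yylem3.5}(1). The only point that requires a moment of care is reading ``artinian'' in the statement as asserting that the center is an artinian \emph{ring}, so that the nilpotency of $J$ and the splitting of $Z/J$ into a product of fields are available; under that reading the corollary is indeed immediate.
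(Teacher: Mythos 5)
Your proof is correct and follows exactly the paper's route: the paper states this corollary as an immediate consequence of Theorem \ref{yythm4.2} together with Lemma \ref{yylem3.5}(1), and your verification that a commutative artinian center has nilpotent prime radical with $Z/J$ a finite direct sum of fields is precisely the (routine) content the paper leaves implicit.
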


Next we will prove Theorem \ref{yythm0.4}. We start with the following lemma. We refer to
\cite{ASS} for basic definitions of quivers and their path algebra. Let $C_n$ be the cyclic quiver
with $n$ vertices and $n$ arrow connecting these vertices in one oriented direction. In
representation theory of finite dimensional algebras, quiver $C_n$ is also called {\it type
$\widetilde{A}_{n-1}$}. Let $0,1,\dots, n-1$ be the vertices of $C_n$, and $a_i: i\to i+1$ (in
${\mathbb Z}/(n)$) be the arrows in $C_n$. Then $w:= \sum_{i=0}^{n-1} a_{i}a_{i+1}\cdots a_{i+n}$
is a central element in $\Bbbk C_n$.

\begin{lemma}
\label{yylem4.4}
Let $Q$ be a finite quiver that is connected and let $\Bbbk Q$ be its path
algebra. Then
$$Z(\Bbbk Q)=\begin{cases}
\Bbbk & {\text{if $Q$ has no arrow,}}\\
\Bbbk[x]& {\text{if $Q=C_1$ or equivalently $\Bbbk Q=\Bbbk[x]$, }}\\
\Bbbk[w]& {\text{if $Q=C_n$ for $n\geq 2$,}}\\
\Bbbk &{\text{otherwise.}}
\end{cases}$$
\end{lemma}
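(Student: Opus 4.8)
The plan is to exploit the $\mathbb{N}$-grading of $\Bbbk Q$ by path length, in which the degree-zero component is $\bigoplus_i \Bbbk e_i$ (the span of the vertex idempotents) and every arrow has degree one. Because the $e_i$ and the arrows are homogeneous and generate $\Bbbk Q$, each homogeneous component of a central element is again central, so I would reduce to analysing homogeneous central elements separately in each degree. A preliminary step is to show that any central $z$ is supported on closed paths: commuting $z$ with each $e_k$ and using that the blocks $e_j(\Bbbk Q)e_i$ are linearly independent forces $z=\sum_i e_iz e_i$. Writing $z=\sum_i z_i$ with $z_i\in e_i(\Bbbk Q)e_i$, centrality becomes equivalent to the family of relations $a z_i = z_j a$ for every arrow $a\colon i\to j$.

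In degree zero one has $z_i=\lambda_i e_i$, and these relations give $\lambda_i=\lambda_j$ whenever $i$ and $j$ are joined by an arrow; since $Q$ is connected, all $\lambda_i$ coincide and the degree-zero central elements are exactly the scalars $\Bbbk\cdot 1$. The heart of the matter is the positive-degree case. Let $z=\sum_i z_i$ be homogeneous of degree $d\geq 1$ and nonzero, each $z_i$ a combination of length-$d$ cycles at $i$. Left multiplication by an arrow $a\colon i\to j$ carries distinct paths to distinct paths, hence is injective, so the relation $a z_i=z_j a$ shows $z_i\neq 0$ if and only if $z_j\neq 0$ whenever $i\to j$. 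As $Q$ is connected, the support $S=\{i:z_i\neq 0\}$ is either empty or all of $Q$; since $z\neq 0$ it is all of $Q$. In particular every vertex carries a length-$d$ cycle and therefore has in-degree and out-degree at least one.

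Next I would determine the local structure at each vertex by tracking the initial and final arrows of paths (fixing the convention $uv=$ ``traverse $v$ then $u$''). Fix $i$ and a cycle $p$ occurring in $z_i$ with nonzero coefficient, and let $b$ be the initial arrow of $p$. For any arrow $a\colon i\to j$ out of $i$, the paths occurring in $a z_i$ all end with the arrow $a$, while those in $z_j a$ all begin with $a$; the path $a p$ ends with $a$ but begins with $b$, so to match a term of $z_j a$ it must begin with $a$, forcing $b=a$. As this holds for every outgoing arrow $a$ while $b$ is fixed, $i$ has a unique outgoing arrow. The symmetric argument, applied to the final arrow of $p$ and the arrows into $i$, shows $i$ has a unique incoming arrow. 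Thus every vertex has in-degree and out-degree exactly one, and a connected quiver with this property is exactly a cyclic quiver $C_n$. Hence a nonzero positive-degree central element can exist only when $Q=C_n$; combined with the degree-zero computation this gives $Z(\Bbbk Q)=\Bbbk$ in all the remaining (``otherwise'') cases, as well as in the arrow-free case, where connectedness makes $Q$ a single vertex.

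It remains to compute $Z(\Bbbk C_n)$ for $n\geq 1$ (this covers both $C_1$, giving $\Bbbk[x]$, and $C_n$ with $n\geq 2$). Here every closed path at a vertex $i$ is a power of the basic loop $c_i$ that goes once around, so $z_i\in\Bbbk[c_i]$; the relations $a_i z_i=z_{i+1}a_i$ together with the path identities $a_i c_i^{\,k}=c_{i+1}^{\,k}a_i$ force the coefficient of $c_i^{\,k}$ to be independent of $i$, and summing over $i$ (using $c_i c_j=0$ for $i\neq j$) yields $z\in\Bbbk[w]$ with $w=\sum_i c_i$. The reverse inclusion $\Bbbk[w]\subseteq Z$ is the centrality of $w$ recorded just before the lemma. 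I expect the main obstacle to be the combinatorial bookkeeping in the initial/final-arrow step: one must keep the composition convention consistent and verify that the offending path genuinely has no cancelling partner on the other side, which is where the linear independence of the path basis is essential; the rest is routine.
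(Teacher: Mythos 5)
Your proposal is correct and follows essentially the same route as the paper's proof: reduce to homogeneous central elements via the path-length grading, decompose a central element along the vertex idempotents as $\sum_i e_i z e_i$, propagate nonvanishing of the components along arrows using connectedness, and match initial arrows in $a z_i = z_j a$ to force a unique outgoing arrow at each vertex, whence $Q=C_n$, followed by the direct computation of $Z(\Bbbk C_n)$. If anything, your write-up is slightly more complete than the paper's: you also run the symmetric final-arrow argument to pin down in-degree one (the paper only treats the outgoing direction before asserting that $Q$ is $C_n$), and your uniform argument subsumes the one-vertex cases (free algebras, the trivial quiver) that the paper disposes of separately.
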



\begin{proof} Since $\Bbbk Q$ is ${\mathbb N}$-graded, its center
$Z(\Bbbk Q)$ is also ${\mathbb N}$-graded.

If $Q$ has one vertex without arrow, $\Bbbk Q=\Bbbk$ and the center is $\Bbbk$.
If $Q=C_1$, then $\Bbbk Q=\Bbbk [x]$ and the center is $\Bbbk[x]$.
If $Q$ has one vertex with more than one arrows, then $\Bbbk Q$ is a free algebra
and in this case, its center is $\Bbbk$.

For the rest of the proof we suppose that $Q$ has at least two vertices.
Let $\{e_i\}_{i=1}^v$ be the vertices in $Q$ where $v\geq 2$. 

First of all, every central element of degree 0 is in the base field $\Bbbk$.
Let $f$ be a nonzero central element of degree $d>0$.
Write $f$ as $\sum c a_{s_1} a_{s_2}\cdots a_{s_d}$ where $c\in \Bbbk$
and $a_{s_i}$ are arrows in $Q$.
Since $f$ is central, $e_i f=fe_i$. Thus, 
$$f=(\sum_{i=1}^v e_i) f=\sum_{i=1}^v e_i f=\sum_{i=1}^v e_i f e_i=
\sum_{i=1}^{v} f_i$$ 
where $f_i=e_ife_i$. Since $f\neq 0$, $f_i\neq 0$ for some $i$. Without loss
of generality, we may assume that $f_1\neq 0$.

If $a$ is an arrow from $1$ to $i$ with $i\neq 1$, then
$$0\neq f_1 a=f a=af =af_i.$$
So $f_i\neq 0$. Similarly, if $a$ is an arrow from $i\neq 1$ to $1$,
then $f_i a=a f_1\neq 0$. Since $Q$ is connected, every vertex is
connected with the vertex 1. It follows by induction that 
$f_i\neq 0$. Further, for every arrow $b$ from $i$ to $j$, we have
\begin{equation}
\label{E4.4.1}\tag{E4.4.1}
f_i b= b f_j\neq 0.
\end{equation}
Write $f_i=\sum c a_{s_1}a_{s_2}\cdots a_{s_d}$ with some $c\in \Bbbk$.
Then equation \eqref{E4.4.1} implies that 
\begin{equation}
\label{E4.4.2}\tag{E4.4.2}
a_{s_1}=b
\end{equation}
for all possible arrow $b$ from $i$ to $j$ and all possible
nonzero terms $c a_{s_1}a_{s_2}\cdots a_{s_d}$ in $f_i$. 
Equation \eqref{E4.4.2} implies that
\begin{enumerate}
\item[(1)]
$b$ is unique, and 
\item[(2)]
$a_{s_1}$ is unique. 
\end{enumerate}
This means that, for every $i$, there is a unique $j$, denoted by 
$\sigma(i)$, such that there is a unique arrow from $i$ to 
$\sigma(i)$. Since $Q$ is connected with $v$ vertices,
$\sigma(i)\neq i$ for all $i$. This characterizes the quiver 
$C_n$. Now its routine to check that the center of $\Bbbk C_n$
is as described.

The above paragraph shows that, if $Q$ is not $C_n$, then there is no
nonzero central element of positive degree. Therefore, $Z(\Bbbk Q)=\Bbbk$.
This finishes the proof.
\end{proof}

\begin{lemma}
\label{yylem4.5}
Let $Q=C_n$ for $n\geq 2$.
\begin{enumerate}
\item[(1)]
$\Bbbk Q$ is a prime  algebra of GKdimension 1 that is not Azumaya.
\item[(2)]
$\Bbbk Q$ is strongly detectable and strongly cancellative.
\end{enumerate}
\end{lemma}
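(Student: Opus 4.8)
The plan is to make the path algebra concrete and then feed it into the machinery of Sections 2--3. First I would produce an explicit isomorphism onto a tiled order in a matrix algebra. Set
$$\Lambda=\{(m_{ij})\in M_n(\Bbbk[x]) : m_{ij}\in x\Bbbk[x]\ \text{whenever}\ i>j\},$$
and define $\psi:\Bbbk C_n\to M_n(\Bbbk[x])$ by $e_i\mapsto E_{ii}$, $a_i\mapsto E_{i,i+1}$ for $0\le i\le n-2$, and the wrapping arrow $a_{n-1}\mapsto xE_{n-1,0}$. One checks that $\psi$ respects the path relations and sends the path basis of $\Bbbk C_n$ bijectively onto the monomial basis $\{x^mE_{ij}\}$ of $\Lambda$, so $\psi$ is an isomorphism $\Bbbk C_n\cong\Lambda$. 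Under $\psi$ the central loop $w$ of Lemma \ref{yylem4.4} becomes the scalar matrix $xI$, so $Z(\Bbbk C_n)=\Bbbk[w]$ is carried onto $\Bbbk[x]\cdot I$ and $\Bbbk C_n$ becomes a free module of rank $n^2$ over its center $Z\cong\Bbbk[x]$.

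Part (1) then follows quickly. Being module-finite over the central polynomial ring $Z$, the algebra $\Bbbk C_n$ satisfies a polynomial identity (it embeds in $M_n(\Bbbk(x))$, hence satisfies the Amitsur--Levitzki identity $s_{2n}$) and has $\GKdim\Bbbk C_n=\GKdim Z=1$. For primeness I would work inside the path algebra: given nonzero ideals $I,J$, choose nonzero Peirce components $e_aue_b\in I$ and $e_cve_d\in J$ and a path $q$ from $b$ to $c$ (available since $C_n$ is strongly connected); then $(e_aue_b)\,q\,(e_cve_d)\in IJ$ is nonzero because its image under $\psi$ is a nonzero multiple of $E_{ad}$. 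For the non-Azumaya claim, the central fiber $\Bbbk C_n/(w)$ is the finite-dimensional algebra spanned by the paths of length $<n$, and the ideal generated by the arrows is nonzero but nilpotent (its $n$-th power lies in $(w)$); thus this fiber is not semisimple, whereas every fiber of an Azumaya algebra is central simple.

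For part (2) the plan is to run the chain
$$d(\Bbbk C_n/Z)\ \text{a nonunit}\ \Rightarrow\ \text{strongly}\ \LND^H_Z\text{-rigid}\ \Rightarrow\ \text{strongly}\ Z\text{-retractable}\ \Rightarrow\ \text{strongly detectable}\ \Rightarrow\ \text{strongly cancellative}.$$
The center $Z\cong\Bbbk[x]$ is an affine domain and $\Bbbk C_n$ is prime PI, so the only point to verify before invoking Theorem \ref{yythm2.9} (via Example \ref{yyex2.8}) and Corollary \ref{yycor2.10} is that the discriminant is a nonunit. I would compute it directly from the reduced trace of $M_n(\Bbbk(x))$ on the $Z$-basis of shortest paths $p_{ij}$: the trace matrix $(\tr(p_{ij}p_{kl}))$ has entries $\tr(p_{ii}^2)=1$ and $\tr(p_{ij}p_{ji})=w$ for $i\neq j$ (exactly one of $p_{ij},p_{ji}$ carries the factor $w$ coming from the wrapping arrow), all other entries being $0$. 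Hence it is block diagonal with $n$ blocks $(1)$ and $\binom{n}{2}$ blocks $\left(\begin{smallmatrix}0&w\\ w&0\end{smallmatrix}\right)$, giving $d(\Bbbk C_n/Z)=\pm w^{n(n-1)}$ up to a unit. As $n\ge 2$ this is a nonconstant, hence nonunit, element of $\Bbbk[w]$; conceptually its nonunit-ness is exactly the failure of the Azumaya property established in part (1).

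Granting this, Example \ref{yyex2.8} shows the discriminant is effective in $Z$, so Theorem \ref{yythm2.9} (equivalently the strong form of Corollary \ref{yycor2.10}) gives that $\Bbbk C_n$ is strongly $\LND^H_Z$-rigid. Lemma \ref{yylem2.6}(2) then yields strong $Z$-retractability and Lemma \ref{yylem3.2} promotes this to strong detectability. Finally $\Bbbk C_n$ is strongly Hopfian, being a prime affine PI algebra (Lemma \ref{yylem3.5}(3)), or simply because it is noetherian as a module-finite extension of $\Bbbk[x]$; Lemma \ref{yylem3.6}(2) then delivers strong cancellativity. The main obstacle is the discriminant computation: one must use the correct (reduced) trace for the order $\Bbbk C_n$ inside the central simple algebra $M_n(\Bbbk(x))$ and confirm that the determinant is a genuinely nonconstant power of $w$ rather than a unit, which is precisely where the non-Azumaya structure and the fact that $w$ enters only through the single wrapping arrow are used.
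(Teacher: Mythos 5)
Your proof is correct, and it takes a genuinely different route from the paper's. The paper disposes of part (1) with the words ``by a direct computation,'' and for part (2) it does not argue on $\Bbbk C_n$ directly at all: it reduces to the algebraically closed case by base change (Lemma \ref{yylem3.11}) and then invokes Theorem \ref{yythm4.12}(2) -- a forward reference, legitimate because Proposition \ref{yypro4.10} and Theorem \ref{yythm4.12} nowhere use Lemma \ref{yylem4.5} -- whose proof rests on the Brown--Yakimov theorem (discriminant nonunit $\Leftrightarrow$ non-Azumaya) in characteristic zero and on a separate Azumaya-locus argument, via $N(C[t_1,\dots,t_n])=N(C)[t_1,\dots,t_n]$, in positive characteristic. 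You instead realize $\Bbbk C_n$ concretely as the tiled order $\Lambda\subset M_n(\Bbbk[x])$ and compute the reduced-trace discriminant by hand; your computation checks out: the shortest paths $p_{ij}$ form a $Z$-basis, the Gram matrix is block diagonal with $n$ blocks $(1)$ and $\binom{n}{2}$ blocks $\left(\begin{smallmatrix}0&w\\ w&0\end{smallmatrix}\right)$, so $d(\Bbbk C_n/Z)=\pm w^{n(n-1)}$, a nonunit for $n\geq 2$ (for $n=2$ this is exactly Example \ref{yyex5.1} with $R=\Bbbk[x]$, $f=x$). From there your chain through Example \ref{yyex2.8}, Theorem \ref{yythm2.9}, Lemma \ref{yylem2.6}(2), Lemma \ref{yylem3.2}, Lemma \ref{yylem3.5} and Lemma \ref{yylem3.6}(2) is applied with the hypotheses correctly verified. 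What your route buys: it is uniform in the characteristic, since you verify the nonunit discriminant explicitly rather than inferring it from non-Azumayaness, thereby bypassing both Brown--Yakimov (whose trace hypothesis is the paper's stated obstruction in characteristic $p$) and the paper's separate characteristic-$p$ workaround; it needs no algebraic closure and hence no base-change step; and it supplies the missing ``direct computation'' for part (1), including a clean proof of non-Azumayaness via the nonsemisimple fiber at $w=0$ (the arrow ideal $J$ satisfies $J^n\subseteq(w)$). What the paper's route buys: generality -- Lemma \ref{yylem4.5} falls out as a special case of the classification of all affine prime algebras of GK-dimension one, with no algebra-specific computation whatsoever. One point worth making explicit if you write this up: Theorem \ref{yythm2.9} is stated for ``the discriminant'' without fixing the trace, so you should say (as you implicitly do) that you take the reduced trace of the order $\Bbbk C_n$ in $M_n(\Bbbk(x))$, which is automorphism-equivariant and stable under central polynomial extension, so the lemmas from \cite{BZ1} quoted inside the proof of Theorem \ref{yythm2.9} apply to it in every characteristic.
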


\begin{proof}
(1) By a direct computation.

(2) If $\Bbbk$ is algebraically closed, this is a special case of
Theorem \ref{yythm4.12}(2), which we will prove later.
If $\Bbbk$ is not algebraically closed, let $\Bbbk'$ be the closure of
$\Bbbk$. By Theorem \ref{yythm4.12}(2), $\Bbbk' Q$ is strongly detectable
over $\Bbbk'$. By Lemma \ref{yylem3.11}, $\Bbbk Q$ is strongly detectable
over $\Bbbk$, and then strongly cancellative by Lemmas \ref{yylem3.5}(2)
and \ref{yylem3.6}(2).
\end{proof}

We need an elementary lemma. Two idempotents $e_1$ and $e_2$ in an 
algebra $A$ are {\it counital} if $e_1+e_2=1$.

\begin{lemma}
\label{yylem4.6}
Let $A$ and $B$ be two algebras.
\begin{enumerate}
\item[(1)]
If $A$ and $B$ are {\rm{(}}strongly{\rm{)}} cancellative, so is $A\oplus B$.
\item[(2)]
If $A$ and $B$ are {\rm{(}}strongly{\rm{)}} retractable, so is $A\oplus B$.
\item[(3)]
If $A$ and $B$ are {\rm{(}}strongly{\rm{)}} detectable, so is $A\oplus B$.
\end{enumerate}
\end{lemma}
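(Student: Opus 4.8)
The plan is to exploit the two counital central idempotents $e_1=(1_A,0)$ and $e_2=(0,1_B)$ of $A\oplus B$, reducing each of (1), (2) and (3) to the corresponding property for $A$ and for $B$ individually. Fix an integer $n\geq 1$, an algebra $C$, and an isomorphism $\phi:(A\oplus B)[\underline{t}]\cong C[\underline{s}]$. Since $e_1,e_2$ are central idempotents of $A\oplus B$, they are also central idempotents of $(A\oplus B)[\underline{t}]$, so $\phi(e_1),\phi(e_2)$ are counital central idempotents of $C[\underline{s}]$. By Lemma \ref{yylem1.5}(4) we have $CI(C[\underline{s}])=CI(C)$, hence $\phi(e_1),\phi(e_2)\in CI(C)$ and $C=C_1\oplus C_2$ with $C_i:=\phi(e_i)C$. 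As each $\phi(e_i)$ is central, multiplication by it gives $\phi(e_i)\,C[\underline{s}]=C_i[\underline{s}]$; likewise $e_1(A\oplus B)[\underline{t}]\cong A[\underline{t}]$ and $e_2(A\oplus B)[\underline{t}]\cong B[\underline{t}]$, with the $j$th variable $t_j$ corresponding to $e_it_j$. Restricting $\phi$ to these summands therefore produces isomorphisms $\phi_1:A[\underline{t}]\cong C_1[\underline{s}]$ and $\phi_2:B[\underline{t}]\cong C_2[\underline{s}]$.

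With this decomposition in hand, part (1) is immediate: the (strong) cancellative property of $A$ and of $B$, applied to $\phi_1$ and $\phi_2$, yields $A\cong C_1$ and $B\cong C_2$, so that $A\oplus B\cong C_1\oplus C_2=C$. For part (2) I would apply the (strong) retractability of $A$ and of $B$ to the same restrictions, obtaining $\phi(A)=\phi_1(A)=C_1$ and $\phi(B)=\phi_2(B)=C_2$; since $\phi$ is a homomorphism, $\phi(A\oplus B)=\phi(A)+\phi(B)=C_1\oplus C_2=C$, which is exactly what retractability of $A\oplus B$ demands. In both parts the non-strong case is simply $n=1$.

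For part (3), detectability of $A$ applied to $\phi_1$ states that each polynomial variable of the target $C_1[\underline{s}]$ lies in $C_1\{\phi_1(t_1),\dots,\phi_1(t_n)\}$. Under the identification above that variable is $\phi(e_1)s_j$ and $\phi_1(t_j)=\phi(e_1)\phi(t_j)$, so $\phi(e_1)s_j\in C_1\{\phi(e_1)\phi(t_1),\dots,\phi(e_1)\phi(t_n)\}\subseteq C\{\phi(t_1),\dots,\phi(t_n)\}$, the inclusion holding because $\phi(e_1)\in C$. Detectability of $B$ gives $\phi(e_2)s_j\in C\{\phi(t_1),\dots,\phi(t_n)\}$ in the same way. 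Since $\phi(e_1)+\phi(e_2)=\phi(1)=1$, adding the two memberships gives $s_j=\phi(e_1)s_j+\phi(e_2)s_j\in C\{\phi(t_1),\dots,\phi(t_n)\}$ for every $j$, so $A\oplus B$ is (strongly) detectable.

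I expect the step requiring the most care to be the bookkeeping in part (3): one must remember that, under the identification $e_1(A\oplus B)[\underline{t}]\cong A[\underline{t}]$, the $j$th variable of $C_1[\underline{s}]$ is the truncation $\phi(e_1)s_j$ rather than $s_j$ itself, so detectability of $A$ alone only recovers the $e_1$- and $e_2$-components of each $s_j$, which must then be recombined using $e_1+e_2=1$. The whole argument rests on the decomposition $C=C_1\oplus C_2$, and this is legitimate only because Lemma \ref{yylem1.5}(4) forces the idempotents $\phi(e_i)$ to lie in $C$ itself rather than merely in $C[\underline{s}]$; without that fact the summands would not be polynomial extensions of subalgebras of $C$ and the reduction would break down.
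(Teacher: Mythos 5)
Your proof is correct and follows essentially the same route as the paper: decompose $C$ via the counital central idempotents $\phi(e_1),\phi(e_2)$, which land in $CI(C)$ by Lemma \ref{yylem1.5}(4), and apply the hypothesis on $A$ and $B$ to the induced isomorphisms $A[\underline{t}]\cong C_1[\underline{s}]$ and $B[\underline{t}]\cong C_2[\underline{s}]$. The paper only writes out part (1) and declares (2) and (3) ``similar''; your careful bookkeeping for (3) --- recovering $s_j$ as $\phi(e_1)s_j+\phi(e_2)s_j$ from the two truncated memberships --- is exactly the omitted argument.
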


\begin{proof} We only prove part (1). The proof of parts (2,3)
is similar.

Suppose $\phi: (A\oplus B)[\underline{t}]\cong C[\underline{s}]$ is an
algebra isomorphism, and let $e_1$ and
$e_2$ be two counital idempotents corresponding to
the decomposition $A\oplus B$. Let $f_i=\phi(e_i)$
for $i=1,2$. By Lemma \ref{yylem1.5}(4), $f_1$ and $f_2$ are
two counital idempotents of $C$. Thus
$C=C_1\oplus C_2$ and $A[\underline{t}]\cong C_1[\underline{s}]$ and
$B[\underline{t}]\cong C_2[\underline{s}]$. Since $A$ and $B$ are
strongly cancellative,
$A\cong C_1$ and $B\cong C_2$, which implies that
$A\oplus B\cong C_1\oplus C_2=C$. The proof of non-``strongly''
version is similar.
\end{proof}

The next is Theorem \ref{yythm0.4}.

\begin{theorem}
\label{yythm4.7} Let $Q$ be a finite quiver and let $A$ be the path algebra
$\Bbbk Q$. Then $A$ is strongly cancellative. If further $Q$ has no
connected component being $C_1$, then $A$ is strongly detectable.
\end{theorem}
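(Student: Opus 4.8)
The plan is to reduce the statement about an arbitrary finite quiver $Q$ to the connected case, and then to split the connected case into the four possibilities described in Lemma \ref{yylem4.4}, handling each by invoking an already-established result. First I would observe that any finite quiver decomposes into finitely many connected components, so that $A=\Bbbk Q\cong \Bbbk Q_1\oplus\cdots\oplus\Bbbk Q_m$ where each $Q_j$ is connected. By Lemma \ref{yylem4.6}(1,3), both strong cancellativity and strong detectability are preserved under finite direct sums. Hence it suffices to prove the theorem for a single \emph{connected} quiver $Q$: I would show that each connected $\Bbbk Q_j$ is strongly cancellative, and that each $\Bbbk Q_j$ with $Q_j\neq C_1$ is strongly detectable; the direct-sum lemma then assembles these into the two conclusions for the original $A$ (the hypothesis ``no component equals $C_1$'' guaranteeing that every summand is detectable).

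For a connected quiver $Q$, I would run through the center computation of Lemma \ref{yylem4.4}. In three of the four cases the center is small: if $Q$ has no arrow, or $Q$ is neither $C_1$ nor $C_n$, then $Z(\Bbbk Q)=\Bbbk$, and if $Q=C_1$ then $\Bbbk Q=\Bbbk[x]$ is commutative. When $Z(\Bbbk Q)=\Bbbk$ the prime radical $J$ of the center is $0$ (so nilpotent) and $Z/J=\Bbbk$ is a field, so Theorem \ref{yythm4.2} applies directly once I know $\Bbbk Q$ is strongly Hopfian. Strong Hopficity follows because $\Bbbk Q$ is a finitely generated, locally finite $\mathbb{N}$-graded $\Bbbk$-algebra, hence strongly Hopfian by Lemma \ref{yylem3.5}(2); thus these $\Bbbk Q$ are both strongly detectable and strongly cancellative. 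The case $Q=C_1$, i.e. $\Bbbk Q=\Bbbk[x]$, is known to be strongly cancellative (Example \ref{yyex2.2}), and since it is the excluded case I need no detectability statement for it. This leaves $Q=C_n$ with $n\geq 2$, which is precisely Lemma \ref{yylem4.5}(2): $\Bbbk C_n$ is strongly detectable and strongly cancellative.

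The main obstacle, and the only nontrivial input, is the case $Q=C_n$ with $n\geq 2$. Here the center is $\Bbbk[w]\cong\Bbbk[x]$, so Theorem \ref{yythm4.2} does not apply (its center is not a finite sum of fields modulo a nilpotent radical), and $\Bbbk[x]$ itself is not detectable; the polynomial-center case genuinely requires a discriminant-type argument rather than a purely structural one. This is exactly why Lemma \ref{yylem4.5}(2) is proved by invoking Theorem \ref{yythm4.12}(2) (over an algebraically closed field) together with the field-extension descent of Lemma \ref{yylem3.11}, and strong Hopficity again via Lemma \ref{yylem3.5}(2). I would therefore simply cite Lemma \ref{yylem4.5} for this case. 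Assembling: every connected component $\Bbbk Q_j$ is strongly cancellative, and every component other than $C_1$ is strongly detectable, so by Lemma \ref{yylem4.6} the full path algebra $A=\Bbbk Q$ is strongly cancellative, and is strongly detectable whenever no component is $C_1$. I would note the only genuinely delicate point is confirming $\Bbbk Q$ is locally finite $\mathbb{N}$-graded and $\Bbbk$-flat so that Lemma \ref{yylem3.5}(2) legitimately yields strong Hopficity in each case; this is immediate since the grading by path length makes each homogeneous piece finite dimensional over $\Bbbk$.
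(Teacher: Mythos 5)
Your proposal is correct and follows essentially the same route as the paper: reduce to connected $Q$ via Lemma \ref{yylem4.6}, handle $Q=C_1$ by the Abhyankar--Eakin--Heinzer result, handle $Q=C_n$ ($n\geq 2$) by Lemma \ref{yylem4.5}(2), and handle all remaining connected quivers by noting $Z(\Bbbk Q)=\Bbbk$ (Lemma \ref{yylem4.4}) and applying Theorem \ref{yythm4.2} together with strong Hopficity from Lemma \ref{yylem3.5}(2). Your additional remarks on why the $C_n$ case needs the Azumaya/discriminant machinery and on local finiteness of the path-length grading are accurate but not departures from the paper's argument.
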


\begin{proof} By Lemma \ref{yylem4.6}, we may assume that $Q$ is
connected.

If $Q=C_1$, then $A=\Bbbk[x]$ and the assertion follows
from \cite[Theorem 3.3 and Corollary 3.4]{AEH}.

If $Q=C_n$, then this is Lemma \ref{yylem4.5}(2).

If $Q\neq C_n$ for any $n\geq 1$, then by Lemma \ref{yylem4.4},
the center of $A$ is $\Bbbk$. By Theorem \ref{yythm4.2}(1),
$A$ is strongly detectable. Since $A$ is ${\mathbb N}$-graded
and locally finite, it is strongly Hopfian by Lemma \ref{yylem3.5}(2).
By Theorem \ref{yythm4.2}(2), $A$ is strongly cancellative. This
completes the proof.
\end{proof}

For the rest of this section we consider affine prime
algebras of GKdimension one and prove Theorem \ref{yythm0.6}.
For simplicity, we assume that $\Bbbk$ is algebraically closed.
It is not immediately clear to us if this assumption can be
removed. However, this assumption definitely makes some of 
argument and even language easier.

Let $A$ be an affine prime algebra of GKdimension one.
By a result of Small-Warfield \cite{SW}, $A$ is a finitely generated
module over its affine center. As a consequence,
$A$ is noetherian.

Let $R$ be a commutative
algebra, an $R$-algebra $A$ is called {\it Azumaya} if $A$ is a
finitely generated faithful projective $R$-module and the
canonical morphism
\begin{equation}
\label{E4.7.1}\tag{E4.7.1}
 A\otimes_R A^{op}\to \End_R(A)
\end{equation}
is an isomorphism. Note that the Brauer group of a commutative
algebra $R$, denoted by $Br(R)$, is the set of
Morita-type-equivalence classes of Azumaya algebras over $R$,
in other words, $Br(R)$ classifies Azumaya algebras over $R$
up to an equivalence relation \cite{AG}. See \cite{Sc} for 
some discussion about the Brauer group. Another way of defining 
an Azumaya algebra is the following (when $\Bbbk$ is algebraically 
closed). We refer to \cite[Introduction]{BY} for a discussion.

\begin{definition} \cite[Introduction]{BY}
\label{yydef4.8}
Suppose that $\Bbbk$ is algebraically closed. Let $A$ be an 
affine prime $\Bbbk$-algebra which is a finitely generated 
module over its affine center $Z(A)$. Let $n$ be the 
PI-degree of $A$, which is also the maximal possible 
$\Bbbk$-dimension of irreducible $A$-modules.
\begin{enumerate}
\item[(1)]
The {\it Azumaya locus} of $A$, denoted by ${\mathcal A}(A)$,
is the dense open subset of ${\rm{Maxspec}}\; Z(A)$ which
parametrizes the irreducible $A$-modules of maximal $\Bbbk$-dimension.
In other word, $\fm \in {\mathcal A}(A)$ if and only if
$\fm A$ is the annihilator in $A$ of an irreducible $A$-module
$V$ with $\dim V = n$, if and only if $A/\fm A\cong M_{n}(\Bbbk)$.
\item[(2)]
If ${\mathcal A}(A)={\rm{Maxspec}}\; Z(A)$, $A$ is called
{\it Azumaya}.
\end{enumerate} 
\end{definition}

There are several other equivalent definitions of an Azumaya algebra,
but we will only use Definition \ref{yydef4.8}(2).

For affine prime algebras of GKdimension one, we have the following.

\begin{lemma}
\label{yylem4.9}
Let $A$ be an affine prime algebra of GKdimension one
over an algebraically closed field $\Bbbk$. Then one of the following
holds.
\begin{enumerate}
\item[(1)]
$Z(A)\not\cong \Bbbk[x]$.
\item[(2)]
$Z(A)\cong \Bbbk[x]$ and $A$ is not Azumaya.
\item[(3)]
$Z(A)\cong \Bbbk[x]$ and $A$ is Azumaya. In this case,
$A$ is a matrix algebra over $\Bbbk[x]$.
\end{enumerate}
\end{lemma}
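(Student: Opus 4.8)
The trichotomy itself is purely formal: either $Z(A)\not\cong\Bbbk[x]$, which is case (1), or $Z(A)\cong\Bbbk[x]$, in which case $A$ either fails to be Azumaya (case (2)) or is Azumaya (case (3)). So the only genuine content is the last assertion, that in case (3) the algebra $A$ is a matrix algebra over $\Bbbk[x]$. The plan is to prove this through the Brauer group.

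First I would record the standing hypotheses in case (3). By Small--Warfield \cite{SW}, $A$ is a finitely generated module over its affine center $Z:=Z(A)\cong\Bbbk[x]$, and being Azumaya means that $A$ is a finitely generated faithful projective $Z$-module for which the canonical map $A\otimes_Z A^{op}\xrightarrow{\sim}\End_Z(A)$ is an isomorphism. Thus $A$ determines a class $[A]$ in the Brauer group $Br(Z)=Br(\Bbbk[x])$, and the goal becomes to show this class is trivial.

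The key step is the vanishing $Br(\Bbbk[x])=0$. For this I would invoke the Auslander--Goldman homotopy invariance \cite{AG}, namely $Br(R[x])\cong Br(R)$ for a regular commutative domain $R$, together with $Br(\Bbbk)=0$ since $\Bbbk$ is algebraically closed; hence $Br(\Bbbk[x])\cong Br(\Bbbk)=0$. Triviality of $[A]$ means precisely that $A\cong\End_Z(P)$ for some finitely generated faithful projective $Z$-module $P$. Since $Z\cong\Bbbk[x]$ is a principal ideal domain, every finitely generated projective $Z$-module is free, and as $\operatorname{Spec}Z$ is connected, $P$ has constant rank, say $P\cong Z^n$. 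Therefore $A\cong\End_Z(Z^n)\cong M_n(\Bbbk[x])$, which is the claim, the integer $n$ being the PI-degree of $A$.

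The main obstacle is the Brauer-group input: one must either cite the triviality of $Br(\Bbbk[x])$ correctly or supply a self-contained argument. The cleanest route is the Auslander--Goldman result combined with $\Bbbk$ algebraically closed; alternatively one could argue directly that an Azumaya algebra over the PID $\Bbbk[x]$ all of whose fibers satisfy $A/\fm A\cong M_n(\Bbbk)$ (Definition \ref{yydef4.8}) splits, again exploiting that projectives over $\Bbbk[x]$ are free. Once the splitting $A\cong\End_Z(P)$ is in hand, the reduction to $M_n(\Bbbk[x])$ is immediate from the structure of projective modules over a principal ideal domain.
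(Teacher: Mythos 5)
Your overall strategy coincides with the paper's own proof: the trichotomy is formal, and in case (3) both arguments view $A$ as a class in $Br(\Bbbk[x])$, show that this Brauer group is trivial, and then use freeness of finitely generated projective modules over the PID $\Bbbk[x]$ to conclude $A\cong \End_{\Bbbk[x]}(\Bbbk[x]^n)\cong M_n(\Bbbk[x])$.

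The one step that needs repair is your justification of the vanishing $Br(\Bbbk[x])=0$. The homotopy invariance $Br(R[x])\cong Br(R)$ is \emph{not} true for arbitrary regular commutative domains $R$: Auslander--Goldman's Proposition 7.7 is a characteristic-zero statement, and the paper's Remark \ref{yyrem5.2} exhibits the failure in characteristic $p$ (there $Br(\Bbbk[x_1])=\{1\}$ while $Br(\Bbbk[x_1,x_2])$ is non-trivial, and $\Bbbk[x_1]$ is certainly regular). Since Lemma \ref{yylem4.9} carries no hypothesis on $\ch \Bbbk$, you cannot invoke that statement in the generality you state it. The characteristic-free route, which is the one the paper takes, is \cite[Theorem 7.5]{AG}: for a regular domain $R$ the map $Br(R)\to Br(\Frac R)$ is injective, and combined with Tsen's theorem ($Br(\Bbbk(x))=0$ for $\Bbbk$ algebraically closed) this yields $Br(\Bbbk[x])=0$ in every characteristic. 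Your particular instance $R=\Bbbk$ happens to be harmless because $\Bbbk$ is perfect, but the general claim as you cite it is false, so the citation must be corrected; everything after the vanishing (triviality of $[A]$ meaning $A\cong\End_Z(P)$ with $P$ faithful finitely generated projective, hence free of constant rank) is correct and matches the paper.
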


\begin{proof} Only non-trivial assertion is the second statement
in part (3).

Now we consider $R=\Bbbk[x]$.
By \cite[Theorem 7.5]{AG}, $\Bbbk[x]$ has trivial
Brauer group when $\Bbbk$ is algebraically closed.
Since every projective $\Bbbk[x]$-module is free,
$\Hom_{\Bbbk[x]}(E,E)$ is a matrix algebra over
$\Bbbk[x]$ for every finitely generated projective
$\Bbbk[x]$-module $E$.
By definition \cite[Section 5]{AG}, every Azumaya
algebra $A$ over $\Bbbk[x]$ is  Morita equivalent
to $\Bbbk[x]$. Again, by the fact every finitely 
generated projective $\Bbbk[x]$-module is free, 
$A$ is a matrix algebra over $\Bbbk[x]$.
\end{proof}

In fact, if ${\rm char}\; \Bbbk=0$, by \cite[Proposition 7.7]{AG},
every Azumaya algebra over $\Bbbk [x_1,\cdots,x_n]$
is a matrix algebra over $\Bbbk [x_1,\cdots, x_n]$.
But this is not true when ${\rm char}\; \Bbbk>0$ and 
$n\geq 2$, see Remark \ref{yyrem5.2}.

\begin{proposition}
\label{yypro4.10}
Suppose that $Z(A)=\Bbbk[x]$ and that $A$ is not Azumaya.
\begin{enumerate}
\item[(1)]
If ${\rm char}\; \Bbbk=0$,
then $A$ is strongly $\LND^H_Z$-rigid.
\item[(2)]
In general, $A$ is $Z$-retractable.
\end{enumerate}
As a consequence, $A$ is strongly detectable
and strongly cancellative.
\end{proposition}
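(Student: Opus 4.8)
The plan is to reduce everything to the geometry of the Azumaya locus and then feed the output into the machinery of Sections 2 and 3. First I would record the standing structure: by the Small--Warfield theorem $A$ is a finitely generated module over $Z(A)=\Bbbk[x]$, hence is noetherian, affine, prime and PI; let $n$ be its PI-degree. Since $A$ is not Azumaya, its non-Azumaya locus $S\subset\mathrm{Maxspec}\,\Bbbk[x]=\mathbb{A}^1$ is a nonempty finite set, so $p(x):=\prod_{a\in S}(x-a)$ is a nonconstant squarefree polynomial. The nonemptiness of $S$ is exactly the hypothesis that will make both arguments go.

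For part (1) I would invoke Theorem \ref{yythm2.9}: it suffices to prove that, in characteristic zero, the discriminant of $A$ over $Z=\Bbbk[x]$ is a nonunit, since by Example \ref{yyex2.8} every nonunit of $\Bbbk[x]$ is effective, and Theorem \ref{yythm2.9} then yields strong $\LND^H_Z$-rigidity (equivalently, one quotes Corollary \ref{yycor2.10} for the rigidity and Theorem \ref{yythm2.9} for the strong form). By Tsen's theorem $A\otimes_{\Bbbk[x]}\Bbbk(x)\cong M_n(\Bbbk(x))$, so the reduced-trace form is nondegenerate generically---this is where characteristic zero enters---and the discriminant is a nonzero element of $\Bbbk[x]$. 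The key fact is the order-theoretic dictionary that, in characteristic zero, the discriminant is a unit at a maximal ideal $\fm$ if and only if $A$ is Azumaya at $\fm$ (cf.\ \cite{BY}); since $S\neq\varnothing$ the discriminant vanishes at some $x=a$ and is therefore a nonconstant polynomial, i.e.\ a nonunit.

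For part (2) I would argue directly and uniformly in the number of variables, proving the stronger statement that $A$ is strongly $Z$-retractable. Let $\phi:A[\underline t]\cong B[\underline s]$. Passing to centers gives a $\Bbbk$-algebra isomorphism $\psi:=\phi|_Z:\Bbbk[x][\underline t]\to Z(B)[\underline s]$, so strong cancellation of $\Bbbk[x]$ (Example \ref{yyex2.2}) forces $Z(B)\cong\Bbbk[x]=:\Bbbk[y]$, and $B$ is again affine prime PI of GKdimension one. Because $A[\underline t]=A\otimes_\Bbbk\Bbbk[\underline t]$, its non-Azumaya locus is the cylinder $V(p(x))=S\times\mathbb{A}^n$; moreover $A$ not Azumaya forces $A[\underline t]$, hence $B[\underline s]$, hence $B$ not Azumaya, so the non-Azumaya locus of $B[\underline s]$ equals $V(q(y))$ for a nonconstant $q(y)\in\Bbbk[y]$. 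An algebra isomorphism preserves Azumaya loci, so $\psi$ matches the radical ideals, giving $\psi(p(x))=u\,q(y)$ for some $u\in\Bbbk^\times$. Writing $f:=\psi(x)\in\Bbbk[y,\underline s]$ and grading by total degree in $\underline s$, the identity $p(f)=u\,q(y)$ forces $\deg_{\underline s}f=0$: otherwise the top $\underline s$-form of $p(f)$ would be $(\text{top form of }f)^{\deg p}$, nonzero of positive $\underline s$-degree in the domain $\Bbbk[y,\underline s]$, contradicting $q(y)\in\Bbbk[y]$. Thus $\psi(x)\in Z(B)$, so $\phi(Z(A))\subseteq Z(B)$, and the symmetric argument applied to $\phi^{-1}$ gives equality. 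This degree argument is characteristic-free, which is precisely why part (2) holds in all characteristics whereas part (1) does not.

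The consequence then follows formally: strong $Z$-retractability gives strong detectability by Lemma \ref{yylem3.2}; $A$ is strongly Hopfian by Lemma \ref{yylem3.5}(3) as a prime affine PI algebra; and Lemma \ref{yylem3.6}(2) upgrades strong detectability to strong cancellativity. I expect the main obstacle to be in part (1), namely the identification of the zero locus of the discriminant with the non-Azumaya locus in characteristic zero, which rests on nondegeneracy of the reduced-trace form and the order-theoretic unit-versus-Azumaya criterion. In part (2) the only delicate points are verifying that the non-Azumaya locus is the cylinder $V(p(x))$ cut out by a polynomial in $x$ alone, and that $\phi$ matches the two radical defining ideals under $\psi$; once these are in place, the degree argument is immediate.
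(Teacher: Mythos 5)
Your proposal is correct and, in outline, coincides with the paper's own proof. Part (1) is exactly the paper's route: the Brown--Yakimov theorem gives that the discriminant of $A$ over $Z(A)=\Bbbk[x]$ is a nonunit precisely because $A$ is not Azumaya, and this feeds into Example \ref{yyex2.8} and Theorem \ref{yythm2.9} (Corollary \ref{yycor2.10}) to give strong $\LND^H_Z$-rigidity. Part (2) uses the same invariant the paper uses: the ideal $N(-)$ of the non-Azumaya locus, the cylinder identity $N(C[t_1,\dots,t_n])=N(C)[t_1,\dots,t_n]$ (the paper's \eqref{E4.10.1}), and strong cancellativity of $\Bbbk[x]$, arriving at $\psi(p)=u\,q$ with $u\in\Bbbk^{\times}$; the consequence is then closed off the same way (Lemmas \ref{yylem3.2}, \ref{yylem3.5} and \ref{yylem3.6}). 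The one genuinely different step is how you pass from $\psi(p)=u\,q$ to $\phi(Z(A))=Z(B)$. The paper introduces the set $W_f(A[\underline{t}])$ of central divisors of $f=p$, observes that it lies in $\Bbbk[x]$ and (using that $\Bbbk$ is algebraically closed) contains a degree-one polynomial $a+x$ which generates $Z(A)$, and concludes from $\phi(W_f)\subseteq W_g\subseteq \Bbbk[y]$. You instead compare top $\underline{s}$-homogeneous forms in $p(\psi(x))=u\,q(y)$: if $\deg_{\underline{s}}\psi(x)>0$, the top form of $p(\psi(x))$ would be the $(\deg p)$-th power of the top form of $\psi(x)$, nonzero in the domain $\Bbbk[y,\underline{s}]$ and of positive $\underline{s}$-degree, a contradiction; hence $\psi(x)\in\Bbbk[y]$ directly. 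Both arguments are valid; yours is slightly more direct and dispenses with the divisor bookkeeping and the need for a linear factor of $p$, while the paper's $W_f$ device works verbatim for any generator of $N(A)$. You and the paper leave the same two verifications as routine (the cylinder identity and the fact that an isomorphism matches Azumaya loci), and, like the paper's proof, your argument actually establishes \emph{strong} $Z$-retractability, which is what the consequence requires.

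One small inaccuracy in your commentary on part (1): characteristic zero is not what makes the trace pairing generically nondegenerate --- the trace form on $M_n(\Bbbk(x))$ is nondegenerate in every characteristic. Rather, characteristic zero is what guarantees that $A$ admits a representation-theoretic trace in the sense of \cite[Section 2.1]{BY}, which is the hypothesis of the Brown--Yakimov Main Theorem; this is exactly why the paper gives the separate locus-based argument of part (2) in positive characteristic. Since you invoke the Brown--Yakimov unit-versus-Azumaya dictionary as a citation, just as the paper does, this does not affect the correctness of your proof.
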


\begin{proof}
The consequence follows from Lemmas \ref{yylem2.6},
\ref{yylem3.2} and \ref{yylem3.6}.

(1) Thanks to a wonderful result of Brown-Yakimov
\cite[Main Theorem]{BY}, the discriminant of $A$ over $Z(A)$
is nonunit (this happens if and only if $A$ is not Azumaya).
Since $Z(A)=\Bbbk[x]$, by Corollary \ref{yycor2.10},
$A$ is strongly $\LND^H_Z$-rigid.

(2) If ${\rm char}\; \Bbbk$ is positive, a trace on $A$ might 
not be representation theoretic in the sense of 
\cite[Section 2.1]{BY}. In this case, we can not use 
\cite[Main Theorem]{BY}. So we need a slightly different approach. 
Let $C$ be a PI prime algebra that is finitely generated over 
its center $Z(C)$. Let ${\mathcal N}(C)$ denote the complement 
of the Azumaya locus of $C$ and let $N(C)$ be the ideal of $Z(C)$
that corresponds to ${\mathcal N}(C)$. Then $N(C)$ is a
nonzero proper ideal of $Z(C)$ if $C$ is not Azumaya.

Let $D$ be the polynomial extension
$$D:=C[t_1,\cdots,t_n]\cong C\otimes \Bbbk[t_1,\cdots,t_n]$$
for some $n\geq 1$. Then $D$ is a finitely generated module 
over its center $Z(D)=Z(C)[t_1,\cdots,t_n]$. Using Definition
\ref{yydef4.8}, one can routinely show that
\begin{equation}
\label{E4.10.1}\tag{E4.10.1}
N(C[t_1,\cdots,t_n])=N(C)[t_1,\cdots,t_n].
\end{equation}

Now let $A$ be the algebra in the proposition
and let $B$ be an algebra such that $\phi: A[t_1,\cdots,t_n]
\cong B[s_1,\cdots, s_n]$. We claim that $\phi(Z(A))=Z(B)$.
First of all, $\phi$ induces an isomorphism
$Z(A)[t_1,\cdots,t_n]\cong Z(B)[s_1,\cdots,s_n]$.
Since $Z(A)=\Bbbk[x]$ is strongly cancellative,
$Z(B)=\Bbbk[y]$.
Since $A$ is not Azumaya, there is a $f\in \Bbbk[x]\setminus \Bbbk$
such that $N(A)=f \Bbbk[x]$. Similarly, $N(B)=g
\Bbbk[y]$ for some $g\in \Bbbk[y]\setminus \Bbbk$. It follows from 
\eqref{E4.10.1} that 
$$\begin{aligned}
g \Bbbk[y,s_1,\cdots,s_n]
&= N(B)[s_1,\cdots,s_n]\\
& =N(B[s_1,\cdots,s_n])\\
&\cong N(A[t_1,\cdots,t_n]) \qquad\qquad ({\rm{via}} \;\; \phi^{-1})\\
&=f \Bbbk[x,t_1,\cdots,t_n].
\end{aligned}
$$
Thus $\phi$ maps $f$ to a scalar multiple of $g$.
Define
$$W_f(A[t_1,\cdots, t_n])=\{ w \in Z(A[t_1,\cdots,t_n])\mid
\exists v \in Z(A[t_1,\cdots,t_n]), s.t. \; wv=f\}.$$
Recall that $Z(A[t_1,\cdots,t_n])=\Bbbk[x,t_1,\cdots,t_n]$
and $f\in Z(A)=\Bbbk[x]$ is not a scalar.
Then $W_f(A[t_1,\cdots,t_n])$ is a subset of
$\Bbbk[x]$ containing at least an element of 
form $a+x$ for some $a\in \Bbbk$. Similarly, 
$W_g(B[t_1,\cdots,t_n])$ is a subset of
$\Bbbk[y]$ containing an element of 
form $a'+y$ for some $a'\in \Bbbk$. Since 
$\phi$ maps $W_f(A[t_1,\cdots, t_n])$ to 
$W_g(B[t_1,\cdots,t_n])$ and since $Z(A)$ (respectively, $Z(B)$)
is generated by $a+x$ (respectively, $a'+y$), we have
$\phi(Z(A))=Z(B)$ as required.
\end{proof}

\begin{lemma}
\label{yylem4.11}
The matrix algebra $A:=M_n(\Bbbk [x])$ is strongly cancellative.
\end{lemma}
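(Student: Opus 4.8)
The plan is to identify $B$ explicitly as a matrix algebra over $\Bbbk[x]$ and then match the matrix size. Write $\underline{t}=(t_1,\dots,t_m)$, $\underline{s}=(s_1,\dots,s_m)$, and suppose $\phi\colon A[\underline{t}]\cong B[\underline{s}]$ is an isomorphism, where $A=M_n(\Bbbk[x])$. Since $A[\underline{t}]=M_n(\Bbbk[x,\underline{t}])$, its center is the polynomial ring $\Bbbk[x,\underline{t}]=\Bbbk[x][\underline{t}]$, while the center of $B[\underline{s}]$ is $Z(B)[\underline{s}]$. Restricting $\phi$ to centers gives $\Bbbk[x][\underline{t}]\cong Z(B)[\underline{s}]$, and since $\Bbbk[x]$ is strongly cancellative (Example~\ref{yyex2.2}), I get $Z(B)\cong\Bbbk[y]$, a polynomial ring in one variable.

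Next I would check that $B$ lies in the class to which Lemma~\ref{yylem4.9} applies. The algebra $B$ is affine, being the quotient $B[\underline{s}]/(s_1,\dots,s_m)\cong B$ of the affine algebra $B[\underline{s}]\cong M_n(\Bbbk[x,\underline{t}])$; it is PI as a subalgebra of a PI algebra; it is prime by a direct check (if $aBb=0$ then $aB[\underline{s}]b=0$, forcing $a=0$ or $b=0$ since $B[\underline{s}]$ is prime); and $\GKdim B=1$ by Lemma~\ref{yylem1.4}(5). By Small--Warfield, $B$ is then a finite module over its center $Z(B)\cong\Bbbk[y]$. It remains to see that $B$ is Azumaya. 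Here I would invoke the identity $N(C[\underline{s}])=N(C)[\underline{s}]$ established in the proof of Proposition~\ref{yypro4.10} (equation~\eqref{E4.10.1}) with $C=B$: since $B[\underline{s}]\cong M_n(\Bbbk[x,\underline{t}])$ is a matrix algebra, hence Azumaya, its non-Azumaya ideal $N(B[\underline{s}])$ is the unit ideal $Z(B)[\underline{s}]$, so $N(B)[\underline{s}]=Z(B)[\underline{s}]$ and therefore $N(B)=Z(B)$, i.e.\ $B$ is Azumaya over $\Bbbk[y]$.

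With $Z(B)\cong\Bbbk[y]$ and $B$ Azumaya, Lemma~\ref{yylem4.9}(3) yields $B\cong M_{n'}(\Bbbk[y])\cong M_{n'}(\Bbbk[x])$ for some $n'\geq 1$. Finally I would pin down $n'=n$ by comparing ranks over the center: $A[\underline{t}]=M_n(\Bbbk[x,\underline{t}])$ is free of rank $n^2$ over its center and $B[\underline{s}]\cong M_{n'}(\Bbbk[x,\underline{t}])$ is free of rank $n'^2$, and since $\phi$ carries center to center, $n^2=n'^2$, whence $n=n'$ (equivalently, compare PI-degrees, which are unchanged under central polynomial extension). Hence $B\cong M_n(\Bbbk[x])=A$, proving strong cancellation. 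The step I expect to require the most care is the descent of the Azumaya property from $B[\underline{s}]$ to $B$; the formula~\eqref{E4.10.1} makes this clean, but first verifying that $B$ is genuinely affine, prime, and module-finite over $\Bbbk[y]$ (so that both Lemma~\ref{yylem4.9} and the machinery of Proposition~\ref{yypro4.10} apply) is the part most easily overlooked.
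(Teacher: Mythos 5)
Your proof is correct, and it takes a genuinely different route through the middle of the argument. After obtaining $Z(B)\cong \Bbbk[y]$ from the strong cancellativity of $\Bbbk[x]$ (exactly as the paper does), the paper never determines whether $B$ is Azumaya; instead it splits into two cases. If $B$ is \emph{not} Azumaya, it invokes Proposition \ref{yypro4.10} to conclude that $B$ itself is strongly cancellative, and applying $B$'s cancellation property to the isomorphism $B[\underline{s}]\cong A[\underline{t}]$ gives $A\cong B$ immediately; if $B$ \emph{is} Azumaya, it uses Lemma \ref{yylem4.9}(3) and matches matrix sizes, as you do. You instead rule out the non-Azumaya case altogether: since $B[\underline{s}]\cong M_n(\Bbbk[x,\underline{t}])$ is Azumaya, the identity \eqref{E4.10.1} applied to $C=B$ (which is legitimate once you check, as you carefully do, that $B$ is affine, prime, PI, of GKdimension one, and module-finite over its center) forces $N(B)$ to be the unit ideal, so $B$ is Azumaya over $\Bbbk[y]$. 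Both arguments ultimately lean on the machinery of Proposition \ref{yypro4.10} --- the paper on its statement, you on the formula \eqref{E4.10.1} extracted from its proof. Your route is slightly longer but more informative: it shows that the paper's non-Azumaya branch is in fact vacuous (in that branch the paper concludes $A\cong B$, which retroactively makes $B$ Azumaya anyway). The paper's route is shorter given that Proposition \ref{yypro4.10} is already proved, and its use of the \emph{other} algebra's cancellativity is a nice trick worth remembering. The remaining steps --- $Z(B)\cong\Bbbk[y]$ via \cite[Theorem 3.3 and Corollary 3.4]{AEH}, Lemma \ref{yylem4.9}(3), and $n=n'$ by comparing rank over the center (equivalently PI-degree) --- coincide in both proofs; note only that the paper treats $n=1$ separately by citing AEH directly, whereas your argument handles it uniformly.
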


\begin{proof}
If $n=1$, this is \cite[Theorerm 3.3 and Corollary 3.4]{AEH}.
Now assume that $n>1$.
Suppose $B$ is an algebra such that $\phi: A[\underline{t}]
\cong B[\underline{s}]$ is an isomorphism. Then $Z(A)[\underline{t}]
\cong Z(B)[\underline{s}]$. Since $Z(A)=\Bbbk[x]$, by
\cite[Theorerm 3.3 and Corollary 3.4]{AEH}, $Z(B)\cong \Bbbk[x]$.
It is clear that $B$ is prime of GKdimension one. If $B$ is
not Azumaya over $Z(B)$, then $B$ is strongly cancellative by
Proposition \ref{yypro4.10}. As a consequence, $A\cong B$. If 
$B$ is Azumaya over $Z(B)$, then $B$ is a matrix algebra over 
$\Bbbk[x]$ by Lemma \ref{yylem4.9}(3), say $B=M_{n'}(\Bbbk[x])$. 
Now algebra isomorphisms 
$$M_n(\Bbbk[x,t_1,\cdots,t_n])\cong A[\underline{t}]\cong
B[\underline{s}]\cong M_{n'}(\Bbbk[x,s_1,\cdots,s_n])$$
implies that $n=n'$. Therefore $A\cong B$ as desired.
\end{proof}

Next we are ready to prove Theorem \ref{yythm0.6}.

\begin{theorem}
\label{yythm4.12}
Let $A$ be an affine prime algebra of GKdimension
one over an algebraically closed field.
\begin{enumerate}
\item[(1)]
$A$ is strongly cancellative.
\item[(2)]
If either $Z(A)\neq \Bbbk[x]$ or
$A$ is not Azumaya, then $A$ is
strongly detectable.
\end{enumerate}
\end{theorem}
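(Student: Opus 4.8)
The plan is to reduce everything to the trichotomy of Lemma \ref{yylem4.9} and feed each case into one of the three results that immediately precede the theorem: Proposition \ref{yypro3.10}, Proposition \ref{yypro4.10}, and Lemma \ref{yylem4.11}. First I would record the standing structural facts. By the Small--Warfield theorem \cite{SW}, $A$ is a finitely generated module over its affine center $Z(A)$, so $Z(A)$ is an affine commutative domain of GKdimension one and $A$ is noetherian (and PI). In particular $A$ is strongly Hopfian by Lemma \ref{yylem3.5}(1). This is the hypothesis that will let me upgrade strong detectability to strong cancellability through Lemma \ref{yylem3.6}(2) in every case where detectability is available.

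Next I would split according to Lemma \ref{yylem4.9}. In the case $Z(A)\not\cong\Bbbk[x]$, I would first note that since $\Bbbk$ is algebraically closed, any affine $\Bbbk$-domain isomorphic to $\Bbbk'[x]$ for a field extension $\Bbbk'\supseteq\Bbbk$ forces $\Bbbk'$ to be a finitely generated algebraic extension of $\Bbbk$, hence $\Bbbk'=\Bbbk$; thus $Z(A)\not\cong\Bbbk[x]$ is exactly the failure of the excluded form in Proposition \ref{yypro3.10}. That proposition then gives that $A$ is strongly detectable, and strong Hopficity together with Lemma \ref{yylem3.6}(2) gives strong cancellability, settling both (1) and (2) here. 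In the case $Z(A)\cong\Bbbk[x]$ with $A$ not Azumaya, this is precisely the hypothesis of Proposition \ref{yypro4.10}, which directly delivers strong detectability and strong cancellability, again settling (1) and (2). In the remaining case $Z(A)\cong\Bbbk[x]$ with $A$ Azumaya, Lemma \ref{yylem4.9}(3) gives $A\cong M_n(\Bbbk[x])$, and Lemma \ref{yylem4.11} yields strong cancellability, settling (1); for (2) I would simply observe that this is exactly the case excluded by the hypothesis of part (2) (namely both $Z(A)=\Bbbk[x]$ and $A$ Azumaya), so (2) asserts nothing there.

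Assembling the cases: part (1) holds in all three, and part (2) holds in the first two, which together form the complement of the excluded Azumaya--matrix case. Since the genuinely hard input — the discriminant and Brown--Yakimov argument underlying Proposition \ref{yypro4.10}, and the matrix-algebra cancellation of Lemma \ref{yylem4.11} — is already in place, the theorem itself is a clean assembly over the trichotomy. The only real point to verify carefully in this final step is the reduction observed above, that over an algebraically closed field the dichotomy ``$Z(A)\cong\Bbbk[x]$ versus $Z(A)\not\cong\Bbbk'[x]$ for any $\Bbbk'$'' collapses to ``$Z(A)\cong\Bbbk[x]$ versus $Z(A)\not\cong\Bbbk[x]$,'' so that Proposition \ref{yypro3.10} applies without residual hypotheses in the first case.
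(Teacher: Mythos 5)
Your proposal is correct and follows essentially the same route as the paper: the trichotomy of Lemma \ref{yylem4.9}, with the case $Z(A)\cong\Bbbk[x]$, non-Azumaya handled by Proposition \ref{yypro4.10}, the Azumaya case by Lemma \ref{yylem4.9}(3) and Lemma \ref{yylem4.11}, and strong Hopficity (via Small--Warfield and Lemma \ref{yylem3.5}(1)) feeding Lemma \ref{yylem3.6}(2). The only difference is cosmetic: in the case $Z(A)\not\cong\Bbbk[x]$ the paper inlines the AEH argument (strong retractability of $Z(A)$ plus Lemmas \ref{yylem3.2} and \ref{yylem3.6}(2)) rather than citing Proposition \ref{yypro3.10}, and your explicit verification that over an algebraically closed field ``$\not\cong\Bbbk[x]$'' already rules out $\Bbbk'[x]$ for every field extension $\Bbbk'\supseteq\Bbbk$ makes precise a point the paper leaves implicit.
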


\begin{proof} By Lemma \ref{yylem4.9}, there are three cases
to consider.

Case 1: $Z(A)\not\cong \Bbbk[x]$. By
\cite[Theorerm 3.3 and Corollary 3.4]{AEH}, $Z(A)$ is
strongly retractable. By Lemma \ref{yylem3.2}, $A$ is
strongly detectable, and by Lemma \ref{yylem3.6}(2),
$A$ is strongly cancellative.

Case 2: $Z(A)\cong \Bbbk[x]$ and $A$ is not Azumaya.
The assertion follows from Proposition \ref{yypro4.10}.

Case 3: $Z(A)\cong \Bbbk[x]$ and $A$ is Azumaya. The assertion
follows from Lemma \ref{yylem4.9}(3) and \ref{yylem4.11}.
\end{proof}

\section{Examples, remarks and questions}
\label{yysec5}

First of all, it is quite easy to produce a large family 
of prime, but non-domain, PI rings that are strongly cancellative. 

\begin{example}
\label{yyex5.1}
Let $R$ be an affine commutative domain and let $f$ be a product
of a set of generating elements of $R$. Let
$$A=\begin{pmatrix} R & fR \\R &R\end{pmatrix}.$$
Then the discriminant of $A$ over its center $R$ is $-f^2$.
It easy to see that $-f^2$ is a effective element in $R$. So
$A$ is strongly $\LND^H_Z$-rigid [Theorem \ref{yythm2.9}].
As a consequence, $A$ is strongly $Z$-retractable, detectable
and cancellative. 
\end{example}

Recall that the Brauer group of a commutative domain $A$ is 
denoted by $Br(A)$. 

\begin{remark}
\label{yyrem5.2} 
Let $\Bbbk$ be an algebraically closed field of characteristic 
zero and let $A$ be a commutative affine regular domain over 
$\Bbbk$. By \cite[Proposition 7.7]{AG}, $Br(A)$ is naturally 
isomorphic to $Br(A[t])$. Consequently, $Br(\Bbbk[x_1,\cdots,x_n])$
is trivial for all $n\geq 0$. If $A[t]\cong B[s]$ for some 
algebra $B$, then $B$ is also a commutative affine regular domain. 
Therefore
$$Br(A)\cong Br(A[t])\cong Br(B[s])\cong Br(B).$$
We are wondering if there are other connections between the 
cancallative property of $A$ and the structure of $Br(A)$. For 
example, does what kind of structure of $Br(A)$ force that $A$ 
cancallative?

If $\Bbbk$ is an algebraically closed field of positive 
characteristic, the situation is different. First of all, 
$Br(\Bbbk[x])=Br(\Bbbk)=\{1\}$ by \cite[Theorem 7.5]{AG}. 
By \cite[Lemma 6.1]{Ne}, the $n$th Weyl algebra is a nontrivial 
Azumaya algebra over its center and that the center of the 
$n$th Weyl algebra is isomorphic to the polynomial ring of 
$2n$ variables. This implies that $Br(\Bbbk[x_1,\cdots,x_{2n}])$
is non-trivial for $n\geq 1$. Since $Br(A)$ is a subgroup of $Br(A[t])$, 
$Br(\Bbbk[x_1,\cdots,x_{m}])$ is non-trivial for all $m\geq 2$.
It is known that, by \cite{Ru}, $\Bbbk[x_1,x_2]$ is cancellative, 
while, by \cite{Gu1}, $\Bbbk[x_1,x_2,x_3]$ is not. 
\end{remark}

The study of connections between the cancellative property of $A$ 
and other invariants such as the Picard group or Grothendieck group
of $A$ might also be interesting.

\begin{remark}
\label{yyrem5.3} 
It would be interesting to know if retractable (respectively,
detectable, cancellative) property is preserved under some 
usual constructions. More precisely, suppose $A$ has property
${\mathcal P}$, where ${\mathcal P}$ stands for the {\it retractable}, 
{\it detectable}, or {\it cancellative} property.
\begin{enumerate}
\item[(a)]
Under what hypotheses, does a localization $AS^{-1}$ have 
${\mathcal P}$?
\item[(b)]
Under what hypotheses, does the skew polynomial extension 
$A[t,\sigma,\delta]$ have ${\mathcal P}$?
\item[(c)]
Under what hypotheses, does the matrix algebra $M_n(A)$ 
have ${\mathcal P}$?
\end{enumerate}
We are also wondering the following.
\begin{enumerate}
\item[(d)]
Suppose $A$ is filtered with associated graded ring $\gr A$. 
If $\gr A$ has property ${\mathcal P}$ (where 
${\mathcal P}$ stands for the {\it retractable}, 
{\it detectable}, or {\it cancellative} property), 
under what hypotheses, does $A$ have ${\mathcal P}$?
\item[(e)]
Let $J(A)$ be the Jacobson radical of $A$.
Suppose $A/J(A)$ has ${\mathcal P}$, under what hypotheses, 
does $A$ have ${\mathcal P}$?
\end{enumerate}
\end{remark}

The study of noncommutative Cancellation Problem has just started  
a couple of years ago. One can ask if the cancellative property
holds for any interesting class of noncommutative algebras. Here
are some examples.

\begin{question}
\label{yyque5.4}
Prove or disprove the following classes of algebras are 
cancellative.
\begin{enumerate}
\item[(1)]
Preprojective algebras.
\item[(2)]
The Weyl algebras in positive characteristic. Note that 
the Weyl algebras in characteristic zero are cancellative by
\cite[Proposition 1.3]{BZ1}.
\item[(3)]
The Sklyanin algebras of dimension $\geq 3$, or the 
PI Sklyanin algebras of dimension $\geq 3$.
\end{enumerate}
\end{question}

Several other questions have already been stated in the introduction.

\subsection*{Acknowledgments}
The authors thank Ken Goodearl and Daniel Krashen for many useful
conversations on the subject, and thank Ken Goodearl for suggesting
the term ``retractable'' in Definition \ref{yydef2.1} and Lance 
Small for providing the references \cite{BLK, Mal}. 
O. Lezama was partially
supported by Universidad Nacional de Colombia (HERMES code 40482).
Y.H. Wang was partially supported by the Foundation of China Scholarship 
Council (Grant No. [2016]3099), the Foundation of Shanghai Science and 
Technology Committee (Grant No. 15511107300), the Scientific Research 
Starting Foundation for the Returned Overseas Chinese Scholars of 
Ministry of Education of China and the Innovation program of Shanghai 
Municipal Education Commission (No. 15ZZ037) .
J.J. Zhang was partially supported by the US National Science Foundation
(Nos. DMS-1402863 and DMS-1700825).

\providecommand{\bysame}{\leavevmode\hbox to3em{\hrulefill}\thinspace}
\providecommand{\MR}{\relax\ifhmode\unskip\space\fi MR }
\providecommand{\MRhref}[2]{%

\href{http://www.ams.org/mathscinet-getitem?mr=#1}{#2} }
\providecommand{\href}[2]{#2}

\end{document}